\documentclass[12pt,reqno]{amsart}

\usepackage{amsmath,amssymb,amsthm,graphicx,amsxtra,setspace}
\usepackage{amsfonts}
\usepackage{mathtools} 
\usepackage{xcolor}
\usepackage{soul} 
\usepackage{mathrsfs}

\usepackage[margin=0.80in]{geometry}

\usepackage{mathrsfs}
\usepackage{eucal}
\usepackage{hyperref}
\usepackage{upgreek}
\usepackage{multirow}
\usepackage{mathabx}
\usepackage{type1cm,eso-pic,color}
\usepackage{fouridx}

\usepackage{todonotes}

\allowdisplaybreaks

\usepackage{graphicx,eurosym}
\usepackage{hyperref}
\usepackage{mathtools}

\usepackage[cyr]{aeguill}

\colorlet{darkblue}{blue!50!black}

\hypersetup{
	colorlinks,%
	citecolor=blue,%
	filecolor=red,%
	linkcolor=darkblue,%
	urlcolor=blue,%
	pdfnewwindow=true,%
	pdfstartview={FitH}
}


\renewcommand{\d}{\/\mathrm{d}\/}

\def\L{\mathbb{L}}
\def\A{\mathrm{A}}
\def\I{\mathrm{I}}

\def\C{\mathrm{C}}
\def\f{\boldsymbol{f}}

\def\B{\mathrm{B}}
\def\D{\mathrm{D}}

\def\X{\mathbb{X}}
\def\x{\boldsymbol{x}}

\def\h{\boldsymbol{g}}

\def\v{\boldsymbol{y}}
\def\w{\boldsymbol{w}}
\def\W{\mathrm{W}}

\def\N{\mathbb{N}}

\def\V{\mathbb{V}}
\def\wi{\widetilde}

\def\u{\mathrm{U}}
\def\P{\mathrm{P}}
\def\u{\boldsymbol{u}}
\def\H{\mathbb{H}}

\newcommand{\R}{\mathbb{R}}

\renewcommand{\d}{\/\mathrm{d}\/}


\newtheorem{theorem}{Theorem}[section]
\newtheorem{lemma}[theorem]{Lemma}
\newtheorem{proposition}[theorem]{Proposition}
\newtheorem{assumption}[theorem]{Assumption}

\newtheorem{definition}[theorem]{Definition}

\newtheorem{remark}[theorem]{Remark}

\let\originalleft\left
\let\originalright\right
\renewcommand{\left}{\mathopen{}\mathclose\bgroup\originalleft}
\renewcommand{\right}{\aftergroup\egroup\originalright}

\numberwithin{equation}{section}

\newcommand{\Addresses}{{
		\footnote{
			\noindent \textsuperscript{1}Faculty of Primary Education, Hanoi Pedagogical University 2,  32 Nguyen Van Linh, Xuan Hoa, Phu Tho, Vietnam.\par\nopagebreak
			
			\noindent \textsuperscript{2,6}Department of Mathematics, Electric Power University, 235 Hoang Quoc Viet, Bac Tu Liem, Hanoi, Vietnam. \par\nopagebreak
			
			\noindent \textsuperscript{3}Tata Institute of Fundamental Research - Centre For Applicable Mathematics (TIFR-CAM), Bangalore 560065, Karnataka, India.\par\nopagebreak
			
			\noindent \textsuperscript{4}Center for Mathematics and Applications (NOVA Math), NOVA School of Science and Technology (NOVA FCT), Caparica,	Portugal. \par\nopagebreak
			
			\noindent \textsuperscript{5}Department of Mathematics, Indian Institute of Technology Roorkee-IIT Roorkee, Haridwar Highway, Roorkee, Uttarakhand 247667, India.\par\nopagebreak
			
			\noindent  
			\textit{e-mail:} \texttt{Bui Kim My: buikimmy@hpu2.edu.vn.}
			
			\textit{e-mail:} \texttt{Ho Thi Hang: hanght@epu.edu.vn.}
			
			\textit{e-mail:} \texttt{Kush Kinra: kushkinra@gmail.com;}
			
			\textit{e-mail:} \texttt{Manil T. Mohan: maniltmohan@ma.iitr.ac.in, maniltmohan@gmail.com.}
			
			\textit{e-mail:} \texttt{Pham Tri Nguyen: nguyenpt@epu.edu.vn.}
			
			\noindent \textsuperscript{*}Corresponding author.
			
			\textit{Key words:} Stochastic globally modified Navier-Stokes equations, asymptotic autonomous roubustness of random attractors, Kuratowski's measure of non-compactness, uniform-tail estimates, flattening property.
			
			Mathematics Subject Classification (2020): Primary: 37L55, 76D05; Secondary: 35B41, 37B55, 35B40.
}}}

\begin{document}


	\title[Asymptotic autonomy of random attractors for GMNSE]{Existence and asymptotic autonomous robustness of random attractors for three-dimensional stochastic globally modified Navier-Stokes equations on unbounded domains
		\Addresses}
	\author[B.K. My, H.T. Hang, K. Kinra, M.T. Mohan, P.T. Nguyen]{Bui Kim My\textsuperscript{1}, Ho Thi Hang\textsuperscript{2}, Kush Kinra\textsuperscript{3,4*}, Manil T. Mohan\textsuperscript{5} and Pham Tri Nguyen\textsuperscript{6}}
	
	\maketitle
	
	\begin{abstract}
		\textsf{In this article, we discuss the \emph{existence and asymptotically autonomous robustness} (AAR) (almost surely) of random attractors for 3D stochastic globally modified Navier-Stokes equations (SGMNSE) on Poincar\'e domains (which may be bounded or unbounded). Our aim is to investigate the existence and AAR of random attractors for 3D SGMNSE when the time-dependent forcing converges to a time-independent function under the perturbation of linear multiplicative noise as well as additive noise. The main approach is to provide a way to justify that, on some \emph{uniformly} tempered universe, the usual pullback asymptotic compactness of the solution operators is uniform across an \emph{infinite} time-interval $(-\infty,\tau]$. The backward uniform ``tail-smallness'' and ``flattening-property'' of the solutions over $(-\infty,\tau]$ have been demonstrated to achieve this goal. To the best of our knowledge, this is the first attempt to establish the existence as well as  AAR of random attractors for 3D SGMNSE on unbounded domains.}
	\end{abstract}

	\section{Introduction} \label{sec1}\setcounter{equation}{0}
	Let $\mathcal{O}$ be an open connected subset of $\R^3$. The time evolution of an incompressible fluid is described by the three-dimensional (3D) Navier-Stokes equations, which are given by
	\begin{equation}\label{3D-NSE}
		\left\{
		\begin{aligned}
			\frac{\partial \u}{\partial t}-\nu \Delta\u+(\u\cdot\nabla)\u+\nabla p&=\boldsymbol{f}, &&\text{ in }\  \mathcal{O}\times(\tau,\infty), \\ \nabla\cdot\u&=0, && \text{ in } \ \ \mathcal{O}\times[\tau,\infty), \\ \u&=0, && \text{ on } \ \ \partial\mathcal{O}\times[\tau,\infty), \\
			\u(\tau)&=\u_0, && \ x\in \mathcal{O},
		\end{aligned}
		\right.
	\end{equation}
	where $\tau\in\mathbb{R}$, $\u(x,t) \in \R^3$, $p(x,t)\in\R$ and $\f(x,t)\in \R^3$ denotes the velocity field, pressure and external forcing, respectively, and the positive constant $\nu$ represents the \emph{kinematic viscosity} of the fluid. The 2D Navier-Stokes equations are well studied in  literature, but there are significant challenges in examining the 3D Navier-Stokes equations. The lack of uniqueness of Leray-Hopf weak solutions and the existence of global strong solutions are some of them. To approach the original problem, there are a good number of modified versions of the 3D Navier-Stokes equations due to Leray and others, see for instance \cite{Constantin_2003}.
	
	In 2006, the authors in \cite{Caraballo+Real+Kloeden_2006} introduced a modified version of 3D Navier-Stokes equations by replacing the nonlinear term $(\u\cdot\nabla)\u$ with $F_{N}(\|\u\|_{\V})\left[(\u\cdot\nabla)\u\right]$, where the function $F_N:(0,+\infty)\to (0,1]$ is defined by
	\begin{align}\label{FN}
		F_{N}(r)=\min \left\{1,\frac{N}{r}\right\}, \;\;\; r\in(0,+\infty),
	\end{align}
	where $N>0$ is a given constant. The modifying factor $F_{N}(\|\u\|_{\V})$ depends on the norm of $\u$ in $\V$ (see Section \ref{FnO} below for function spaces). The resulting system 
	\begin{equation}\label{2}
		\left\{
		\begin{aligned}
			\frac{\partial \u}{\partial t}-\nu \Delta\u+F_{N}(\|\u\|_{\V})\left[(\u\cdot\nabla)\u\right]+\nabla p&=\boldsymbol{f}, &&\text{ in }\  \mathcal{O}\times(\tau,\infty), \\ \nabla\cdot\u&=0, && \text{ in } \ \ \mathcal{O}\times[\tau,\infty), \\ \u&=0, && \text{ on } \ \ \partial\mathcal{O}\times[\tau,\infty), \\
			\u(\tau)&=\u_0, && \ x\in \mathcal{O},
		\end{aligned}
		\right.
	\end{equation}
	is known as globally modified Navier-Stokes equations (GMNSE). The system \eqref{2} is indeed a globally modified version of the Navier-Stokes equations since the modifying factor $F_{N}(\|\u\|_{\V})$ depends on the norm $\|\u\|_{\V}=\|\nabla\u\|_{\L^2(\mathcal{O})}$, which in turn depends on $\nabla\u$ over the whole domain $\mathcal{O}$ and not just at or near the point $x \in \mathcal{O}$ under consideration. Essentially, it prevents large gradients dominating the dynamics and leading to explosions. An another modified version of the system \eqref{2} is addressed in Appendix \ref{ApA} where the modifying factor is $F_{N}(\|\u\|_{\mathbb{L}^4(\mathcal{O})})$. We hope that the modified system \eqref{A.1} can be explored much in future as the modifying factor depends on the norm $\|\u\|_{\mathbb{L}^4(\mathcal{O})}$ only. In this work, our focus is on the analysis of the stochastic counterpart of the system \eqref{2}.

	The global existence and uniqueness of weak as well as strong solutions of 3D GMNSE are well established in literature, see the works \cite{Caraballo+Kloeden_2013,Caraballo+Real+Kloeden_2006,Caraballo+Real+Kloeden_2010,Romito_2009}, etc. Similar to the case of Navier-Stokes equations, it is a challenging problem to study large time behavior of GMNSE on the whole space $\R^3$. Inspired by many interesting works such as \cite{BCLLLR,BL,CLR,CLR1,GLW,Wang+Kinra+Mohan_2023}, these kinds of analysis can be attempted on unbounded Poincar\'e domains. By a Poincar\'e  domain, we mean a domain in which the Poincar\'e inequality is satisfied.  A typical example of unbounded Poincar\'e domains in $\mathbb{R}^3$ is $\mathcal{O}=\R^2\times(-L,L)$ with $L>0$, see \cite[p.306]{R.Temam} and \cite[p.117]{Robinson2}. More precisely, we consider the following assumption on the domain $\mathcal{O}$:
	\begin{assumption}\label{assumpO}
		Let $\mathcal{O}$ be an open, connected and unbounded subset of $\R^3$, the boundary of which is uniformly of class $\mathrm{C}^3$ (see \cite{Heywood}). We assume that, there exists a positive constant $\lambda $ such that the following Poincar\'e inequality  is satisfied:
		\begin{align}\label{poin}
			\lambda\int_{\mathcal{O}} |\psi(x)|^2 \d x \leq \int_{\mathcal{O}} |\nabla \psi(x)|^2 \d x,  \ \text{ for all } \  \psi \in \H^{1}_0 (\mathcal{O}).
		\end{align}
	\end{assumption}
	\begin{remark}
		When $\mathcal{O}$ is a bounded domain, Poincar\'e inequality is satisfied automatically with $\lambda=\lambda_1$, where $\lambda_1$ is the first eigenvalue of the Stokes operator defined on bounded domains.
	\end{remark}


	The goal of this work is to investigate the existence and asymptotically autonomous robustness of pullback random attractors for the following 3D non-autonomous stochastic globally modified Navier-Stokes equations (SGMNSE) that is defined on the domain $\mathcal{O}$ satisfying Assumption \ref{assumpO} (which may be bounded or unbounded):
	\begin{equation}\label{1}
		\left\{
		\begin{aligned}
			\frac{\partial \u}{\partial t}-\nu \Delta\u+F_{N}(\|\u\|_{\V})\left[(\u\cdot\nabla)\u\right]+\nabla p&=\boldsymbol{f}+S(\u)\circ\frac{\d \W}{\d t}, && \text{ in }\  \mathcal{O}\times[\tau,\infty), \\ \nabla\cdot\u&=0,  &&  \text{ in } \ \ \mathcal{O}\times(\tau,\infty), \\ \u&=0, && \text{ on } \ \ \partial\mathcal{O}\times[\tau,\infty), \\
			\u(\tau)&=\u_0,&&  \ x\in \mathcal{O},
		\end{aligned}
		\right.
	\end{equation}
	where the term $S(\u)$ is referred as the diffusion  coefficient of the noise, and it is either independent of $\u$, that is, $S(\u)=\h\in\D(\A)$ (additive noise) or equal to $\u$ (linear multiplicative noise), the symbol $\circ$ means that  the stochastic integral is understood in the sense of Stratonovich, $\W=\W(t,\omega)$ is an one-dimensional two-sided Wiener process defined on a standard  probability space $(\Omega, \mathscr{F}, \mathbb{P})$, and $\D(\A)$ is the domain of the Stokes operator $\A$ defined in \eqref{Stokes}. The well-posedness of SGMNSE has been taken into consideration in the works \cite{Anh+Thanh+Tuyet_2023,Caraballo+Chen+Yang_2023_AMOP,Deugoue+Medjo_2018}, etc.
	

	\subsection{Literature survey}
	It is well-known that attractors plays an important role to analyze the large time behavior of dynamical systems, see \cite{Ball1997JNS,CLR2,CV2,Dia+Lappicy_2021,Li+Wang+Kloeden_2023,Robinson2,Robinson1,Sultanov_2023,R.Temam}, etc., and references therein. Large time behavior such as the existence of global/pullback/exponential/trajectory attractors of 3D globally modified Navier-Stokes equations has been well-discussed in literature, see \cite{Caraballo+Kloeden_2013,Caraballo+Real+Kloeden_2006,Kloeden+Langa+Real_2007,Ren_2014,Zhao+Yang_2017,Zhao+Caraballo_2019}. The works \cite{Arnold,BCF,CDF,CF,Schmalfussr}, etc.,  extended the concept of attractors for deterministic dynamical systems to random attractors for random dynamical systems.  Knowing that evolution equations originating from various fields of science and engineering  are frequently subject to simultaneous stochastic and non-autonomous forcing, the author in \cite{SandN_Wang} extended the concept of autonomous random dynamical systems to non-autonomous random dynamical systems. Several results on random attractors have been studied in light of these theoretical concepts, see \cite{chenp,GLW,KM2,KM3,KM7,PeriodicWang,Wang+Guo+Liu+Nguyen_2024,XC} and many others, for autonomous and non-autonomous stochastic equations. In particular, the works \cite{Anh+Thanh+Tuyet_2023,Caraballo+Chen+Yang_2023_SAM,Caraballo+Chen+Yang_2023_AMOP,Hang+My+Nguyen_2024}  deal with the random dynamics of 3D SGMNSE defined on bounded domains only. Very recently, in \cite{Hang+My+Nguyen_2024a, Hang+Nguyen_2024}, the authors established the existence of random attractors for 3D SGMNSE defined on unbounded domains. However, as far as we aware, this is the first attempt to establish the asymptotic autonomous robustness of random attractors for 3D SGMNSE defined on unbounded domains.


	\subsection{Aims, difficulties and approaches}

The non-autonomous characteristic of evolution systems is well-represented by the temporal dependence of forcing term. This may be the primary characteristic that sets autonomous evolution systems apart from non-autonomous systems. It makes sense that the non-autonomous dynamics of \eqref{1} grow more autonomous if the non-autonomous forcing term $\f(x,t)$ in \eqref{1} converges asymptotically to an autonomous forcing term in some way. \emph{Asymptotically autonomous dynamics} of \eqref{1} is the terminology used in the literature to describe this behavior in such cases.  Our main motivation is to investigate the asymptotically autonomous robustness of random attractors of  \eqref{1} driven by additive as well as linear multiplicative noises when $\f(\cdot,\cdot)$ converges to a time-independent function $\f_{\infty}(\cdot)$ in some sense (see Assumption \ref{Hypo_f-N} below). In particular, we aim to show the following convergence:
\begin{align}\label{MT11}
			\lim_{\tau\to -\infty}\mathrm{dist}_{\H}
			(\mathcal{A}(\tau,\omega),
			\mathcal{A}_{\infty}(\omega))=0, \  \text{a.e. } \omega\in\Omega,
		\end{align}
        where $\mathcal{A}
		=\{\mathcal{A}(\tau,\omega):\tau\in\mathbb{R},
		\omega\in\Omega\}$ is the pullback random attractor of system \eqref{1} corresponding to $\f(\cdot,\cdot)$ and 
$\mathcal{A}_{\infty}=
		\{\mathcal{A}(\omega):
		\omega\in\Omega\}$ is the random attractor of a stochastic globally modified Navier-Stokes equations with the autonomous forcing $\f_\infty$ (see the system \eqref{A-SNSE} below).

	A key step to demonstrate  \eqref{MT11} is how to prove the uniform precompactness of $\bigcup\limits_{s\in(-\infty,\tau]}\mathcal{A} (s,\omega)$ in $\H$. Because of the abstract theory introduced in \cite{SandN_Wang}, the pullback asymptotic compactness of $\Phi$ implies the compactness of every single time-section $\mathcal{A}(\tau,\omega)$. But, one cannot expect that the usual pullback asymptotic compactness of $\Phi$ leads to the
	precompactness of $\bigcup\limits_{s\in(-\infty,\tau]}\mathcal{A}(s,\omega)$ in $\H$ because $(-\infty,\tau]$ is an infinite interval. However, it is possible if we are able to prove that the usual pullback asymptotically compactness of $\Phi$ is uniform with respect to a uniformly tempered universe (see \eqref{D-NSE} below)
	over the infinite interval $(-\infty,\tau]$.

	On bounded domains, the \emph{uniform} pullback asymptotic compactness of $\Phi$ over $(-\infty,\tau]$ can be established via compact Sobolev embeddings, see for instance \cite{KRM,WL}, etc. When  $\mathcal{O}$ is an unbounded domain, similar to the one considered in this article, the Sobolev embeddings are no longer compact, proving such a \emph{uniform} asymptotic compactness is therefore harder than the bounded domain case. In this article, we use a result (see Lemma \ref{K-BAC} below) of \textit{Kuratowski's measure of non-compactness}, which requires the \textit{uniform tail-estimates} (\cite{UTE-Wang}) and \textit{flattening property} (\cite{Kloeden+Langa_2007}) of the solutions to the system \eqref{1}.
	
	The fluid dynamic equations like \eqref{1} contain the pressure term $p$, which is essentially different from the parabolic or hyperbolic equations as considered in \cite{CGTW,chenp,LGL,UTE-Wang}, etc. The pressure term $p$ cannot be eliminated by the divergence theorem when we prove uniform tail-estimates or flattening property. However, by taking the divergence in \eqref{1} and using the incompressibility condition ``$\nabla\cdot \u=0$'', we obtain the  rigorous expression of the pressure term in the weak sense (in $\mathrm{L}^2(\mathcal{O})$) as follows:
	\begin{align}\label{pressure}
		p=(-\Delta)^{-1}\bigg[F_{N}(\|\u\|_{\V}) \cdot \sum_{i,j=1}^{2} \frac{\partial^2}{\partial x_i\partial x_j}(u_iu_j)-\nabla\cdot\f\bigg].
	\end{align}
	Since $\mathcal{O}$ is a Poincar\'e domain with the boundary uniformly of class $\mathrm{C}^3$,  for the equation $-\Delta\psi=0$ with Dirichlet's boundary condition ($\psi\in\mathrm{H}^2(\mathcal{O})\cap\mathrm{H}_0^1(\mathcal{O})$),   \eqref{poin} yields
	\begin{align*}
		(-\Delta\psi,\psi)=	\|\nabla\psi\|_{\mathrm{L}^2(\mathcal{O})}^2=0 \Rightarrow \lambda \|\psi\|_{\mathrm{L}^2(\mathcal{O})}=0\Rightarrow \psi=0.
	\end{align*}
	Therefore, 	the operator $-\Delta$ is invertible in $\mathrm{L}^2(\mathcal{O})$  and its inverse $(-\Delta)^{-1}$ is bounded.

	It is worth mentioning here that the pressure term has not been taken into consideration in some of the works in  literature, see for instance \cite{Lin+Guo+Yang+Miranville_2024}. While proving uniform tail-estimates or flattening property for incompressible fluid dynamic models, one always encounter  a term like the one given in \eqref{pressure} which should be estimated in a proper way (see for instance \cite{KK+FC1,KK+FC2,KM7,Kinra+Mohan+Wang_2024,Wang+Kinra+Mohan_2023}, etc.). In this article, we have properly estimated the pressure term for both cases of additive as well as  linear multiplicative noises  (see estimates \eqref{p-value-N} and \eqref{p-value-N-M}, respectively below).
	
	Note that the non-compactness of Sobolev embeddings on unbounded domains can be avoided by using the widely accepted concept of energy equations (introduced in \cite{Ball1997JNS}); for examples, see \cite{BCLLLR,BL,CLR,CLR1,KM7,PeriodicWang}, etc. It should be noted that since $(-\infty,\tau]$ is an infinite time-interval, we are unable to use the concept of energy equations to show the uniform pullback asymptotic compactness of $\Phi$ in $\H$. 
	
	Another difficulty we face in this article is demonstrating the measurability (with respect to sample points) of the uniformly pullback compact attractor since the uniform pullback asymptotic compactness of $\Phi$ requires the consideration of a uniformly tempered universe (see Subsection \ref{BUTRS} below). In comparison to the usual tempered universe (see $\mathfrak{D}_{\infty}$ in Subsection \ref{CoRS} below), showing the measurability of a uniformly pullback compact attractor on uniformly tempered universe is more complicated because the radii of the uniform pullback absorbing sets contain a term with supremum over an uncountable set $(-\infty,\tau]$ (see \eqref{IRAS1-N} below). In \cite{SandN_Wang}, the author established the measurability of the usual pullback random attractor over usual tempered universe. To overcome this difficulty, we show that a uniformly pullback compact attractor is exactly the same as the usual pullback random attractor (see \textbf{Step VI} in Section \ref{thm1.4}). This idea has been successfully implemented in the literature; see the works \cite{CGTW,Wang+Kinra+Mohan_2023,WL}, etc.

	\subsection{Outline}
	In the next section, we first consider an abstract formulation of the system  \eqref{1},  discuss  the properties of an Ornstein-Uhlenbeck process and provide some details on Kuratowski's measure of non-compactness. We also present the assumption and state the main results (Theorems \ref{MT1-N} and \ref{MT1}) of this article in the same section. In Section \ref{sec3}, we prove Theorem \ref{MT1-N} for the problem \eqref{1} driven by additive noise. In the final section, we prove Theorem \ref{MT1} for the problem \eqref{1} driven by multiplicative noise. In Appendix \ref{ApA}, we address a different modified version of 3D Navier-Stokes equations by modifying the nonlinear term $(\u\cdot\nabla)\u$ with $F_{N}(\|\u\|_{\mathbb{L}^4(\mathcal{O})})\left[(\u\cdot\nabla)\u\right]$. We also discuss the existence and uniqueness of weak as well as strong solutions of the problem \eqref{A.1} (Theorems \ref{thm-weak} and \ref{thm-strong}) in the same section.

	\section{Mathematical Formulations and Auxiliary Results}\label{sec2}\setcounter{equation}{0}
	In this section, we first discuss some necessary function spaces which are needed for obtaining the main results of this work. Then we define linear and nonlinear operators which help us to obtain an abstract formulation of the stochastic system \eqref{1}. Next we provide the assumption and state the main results of this article. Further, we formulate an Ornstein-Uhlenbeck process, discuss its properties and define backward tempered random sets. Finally, we provide the definition of  Kuratowski's measure of non-compactness and some related results (see Lemma \ref{K-BAC}). Note that Lemma \ref{K-BAC} plays a crucial role in proving the time-semi-uniform asymptotic compactness (see Subsection \ref{thm1.4}).

	\subsection{Function spaces and operators}\label{FnO}
	Let the space $\mathcal{V}:=\{\u\in\C_0^{\infty}(\mathcal{O};\R^{3}):\nabla\cdot\u=0\},$ where $\C_0^{\infty}(\mathcal{O};\R^{3})$ denotes the space of all infinite times differentiable functions  ($\R^{3}$-valued) with compact support in $\mathcal{O}$. Let $\H$ and $\V$ denote the completion of $\mathcal{V}$ in 	$\mathrm{L}^2(\mathcal{O};\R^{3})$ and $\mathrm{H}^1(\mathcal{O};\R^{3})$ norms, respectively. The spaces  $\H$ and $\V$ are endowed with the norms $\|\u\|_{\H}^2:=\int_{\mathcal{O}}|\u(x)|^2\d x$ and $\|\u\|_{\V}^2:=\int_{\mathcal{O}}|\nabla\u(x)|^2\d x$ (using the Poincar\'e inequality),  respectively. The induced duality between the spaces $\V$ and $\V^*$ is denoted by $\langle\cdot,\cdot\rangle.$ Moreover, we have the continuous embedding $\V\hookrightarrow\H\equiv\H^*\hookrightarrow\V^*.$

\begin{remark}
    Note that the notations $\nabla\cdot\u$ and $\nabla\u$ represent the divergence and gradient of the vector-field $\u=(u_1,u_2,u_3)$, respectively. In particular, $\nabla\cdot\u$ and $\nabla\u$ are given by
    \begin{align}
        \nabla\cdot\u = \sum_{i=1}^3 \frac{\partial u_i}{\partial x_i} \;\;\; \text{ and }\;\;\; \nabla\u= 
\begin{pmatrix}
\frac{\partial u_1}{\partial x_1} & \frac{\partial u_1}{\partial x_2} & \frac{\partial u_1}{\partial x_3}   \\[4pt]
\frac{\partial u_2}{\partial x_1}  & \frac{\partial u_2}{\partial x_2} & \frac{\partial u_2}{\partial x_3}  \\[4pt]
\frac{\partial u_3}{\partial x_1}  & \frac{\partial u_3}{\partial x_2}  & \frac{\partial u_3}{\partial x_3} 
\end{pmatrix},
    \end{align}
    respectively.
\end{remark}

	\subsubsection{Linear operator}\label{LO}
	Let $\mathcal{P}: \L^2(\mathcal{O}) \to\H$ denote the Helmholtz-Hodge orthogonal projection (cf.  \cite{Farwig+Kozono+Sohr_2007}). Let us define the Stokes operator
	\begin{equation}\label{Stokes}
		\A\u:=-\mathcal{P}\Delta\u,\;\u\in\D(\A).
	\end{equation}
	The operator $\A:\V\to\V^*$ is a linear continuous operator.	Since the boundary of $\mathcal{O}$ is uniformly of class $\mathrm{C}^3$, it is inferred that $\D(\A)=\V\cap\H^2(\mathcal{O}),$ and $\|\A\u\|_{\H}$ defines a norm in $\D(\A),$ which is equivalent to the one in $\H^2(\mathcal{O})$ (cf. \cite[Lemma  1]{Heywood}). The above argument implies that $\mathcal{P}:\H^2(\mathcal{O})\to\H^2(\mathcal{O})$ is a bounded operator. Moreover, the operator $\A$ is  non-negative self-adjoint  in $\H$ and
	\begin{align}\label{2.7a}
		\langle\A\u,\u\rangle =\|\u\|_{\V}^2,\ \textrm{ for all }\ \u\in\V \ \text{ and }\ \|\A\u\|_{\V^*}\leq \|\u\|_{\V}.
	\end{align}
	
	\begin{remark}
		Since $\mathcal{O}$ is a Poincar\'e domain with the boundary of $\mathcal{O}$ is uniformly of class $\mathrm{C}^3$, the operator $\A$ is invertible and its inverse $\A^{-1}$ is bounded.
	\end{remark}
	
	\subsubsection{Nonlinear operator}\label{BO}
	Let us define the \emph{trilinear form} $b(\cdot,\cdot,\cdot):\V\times\V\times\V\to\R$ by $$b(\u,\boldsymbol{v},\w)=\int_{\mathcal{O}}(\u(x)\cdot\nabla)\boldsymbol{v}(x)\cdot\w(x)\d x=\sum_{i,j=1}^3\int_{\mathcal{O}}\u_i(x)
	\frac{\partial \boldsymbol{v}_j(x)}{\partial x_i}\w_j(x)\d x.$$ 
	If $\u, \boldsymbol{v}$ are two  elements such that the linear map $b(\u, \boldsymbol{v}, \cdot) $ is continuous on $\V$, then the corresponding element of $\V^*$ is denoted by $\B(\u, \boldsymbol{v})$. We also denote $\B(\u) = \B(\u, \u)=\mathcal{P}[(\u\cdot\nabla)\u]$. An integration by parts gives
	\begin{equation}\label{b0}
		\left\{
		\begin{aligned}
			b(\u,\boldsymbol{v},\boldsymbol{v}) &= 0, &&\text{ for all }\ \u,\boldsymbol{v} \in\V,\\
			b(\u,\boldsymbol{v},\w) &=  -b(\u,\w,\boldsymbol{v}), && \text{ for all }\ \u,\boldsymbol{v},\w\in \V.
		\end{aligned}
		\right.\end{equation}
	We also define
	\begin{align*}
		b_{N}(\u,\boldsymbol{v},\w)= F_{N}(\|\boldsymbol{v}\|_{\V})\cdot b(\u,\boldsymbol{v},\w), \; \text{ for all }\ \u,\boldsymbol{v},\w\in \V.
	\end{align*}
	The form $b_{N}$ is linear in $\u$ and $\w$ but it is nonlinear in $\boldsymbol{v}$, however, we have the identity
	\begin{align*}
		b_N(\u,\boldsymbol{v},\boldsymbol{v}) &= 0,\ \text{ for all }\ \u,\boldsymbol{v} \in\V.
	\end{align*}

	\begin{remark}\label{Rem2.1}
		In view of H\"older's and interpolation inequalities, and Sobolev embedding ($\|\u\|_{\L^6(\mathcal{O})}\leq C \|\u\|_{\V}$), we obtain
		\begin{align}\label{HI+SE}
			|b(\u,\boldsymbol{v},\w)| &\leq \|\u\|_{\L^6(\mathcal{O})} \|\nabla\boldsymbol{v}\|_{\L^2(\mathcal{O})}\|\w\|_{\L^3(\mathcal{O})}\leq  C \|\u\|_{\L^6(\mathcal{O})} \|\nabla\boldsymbol{v}\|_{\L^2(\mathcal{O})} \|\w\|^{\frac12}_{\L^2(\mathcal{O})} \|\w\|^{\frac12}_{\L^6(\mathcal{O})} 
			\nonumber\\ & \leq C \|\u\|_{\V} \|\boldsymbol{v}\|_{\V} \|\w\|^{\frac12}_{\H} \|\w\|^{\frac12}_{\V}, \ \text{ for all } \u, \boldsymbol{v}, \w\in \V.
		\end{align}
	\end{remark}
	Next, we define the operator $\B_{N}:\V\times\V\to\V^{\ast}$ by 
	\begin{align*}
		\left\langle \B_{N}(\u,\boldsymbol{v}), \w \right\rangle = b_{N}(\u,\boldsymbol{v},\w), \;\text{ for all } \u, \boldsymbol{v}, \w\in \V
	\end{align*}
	For simplicity of notation, we write $\B_{N}(\u) = \B_{N}(\u,\u)$. Moreover, we also have the following equality 
	\begin{align}\label{BN-diff}
		\B_{N}(\u,\u)-\B_{N}(\boldsymbol{v},\boldsymbol{v})
		& = F_{N}(\|\u\|_{\V}) \cdot  \B (\u-\boldsymbol{v},\u )
		+ \big[ F_{N}(\|\u\|_{\V}) - F_{N}(\|\boldsymbol{v}\|_{\V})\big]\cdot \B ( \boldsymbol{v}, \u )
		\nonumber\\ & \quad
		+ F_{N}(\|\boldsymbol{v}\|_{\V}) \cdot \B(\boldsymbol{v},\u-\boldsymbol{v}).
	\end{align}
	
	Let us now recall some properties of the function $F_{N}$ (cf. \cite{Romito_2009}) which will be useful in the sequel. 	
	\begin{lemma}[{\cite[Lemma 2.1]{Romito_2009}}]
		For any $\u,\boldsymbol{v}\in\V$ and each $N>0$, 
		\begin{align}
			0 \leq \|\u\|_{\V} F_{N}(\|\u\|_{\V}) & \leq N, \label{FN1}\\
			|F_{N}(\|\u\|_{\V})-F_{N}(\|\boldsymbol{v}\|_{\V})|&\leq \frac{1}{N} F_{N}(\|\u\|_{\V})F_{N}(\|\boldsymbol{v}\|_{\V})\cdot \|\u-\boldsymbol{v}\|_{\V}.\label{FN2} 
		\end{align}
	\end{lemma}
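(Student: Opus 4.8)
The plan is to prove the two inequalities \eqref{FN1} and \eqref{FN2} directly from the explicit formula \eqref{FN} for $F_N$, namely $F_N(r)=\min\{1,N/r\}$ for $r>0$, by splitting into the cases $r\le N$ and $r>N$. The bound \eqref{FN1} with $r=\|\u\|_{\V}$ is immediate: if $0<r\le N$ then $F_N(r)=1$ and $rF_N(r)=r\le N$; if $r>N$ then $F_N(r)=N/r$ and $rF_N(r)=N$. Non-negativity is clear since $r>0$ and $N>0$. (The degenerate case $\u=0$, i.e. $r=0$, is harmless because then $b(\u,\cdot,\cdot)=0$ and both sides of everything vanish, or one simply extends $F_N(0)=1$ by continuity.)

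For \eqref{FN2} I would first establish the scalar inequality
\begin{align*}
  |F_N(r)-F_N(s)| \le \frac{1}{N}\,F_N(r)\,F_N(s)\,|r-s|, \qquad r,s>0,
\end{align*}
and then substitute $r=\|\u\|_{\V}$, $s=\|\boldsymbol v\|_{\V}$, using the triangle inequality $|\,\|\u\|_{\V}-\|\boldsymbol v\|_{\V}\,|\le\|\u-\boldsymbol v\|_{\V}$ together with the monotone dependence of the right-hand side on $|r-s|$. To prove the scalar inequality, split into three cases. If $r,s\le N$, both sides are $0$. If $r,s>N$, then $F_N(r)F_N(s)=N^2/(rs)$ and $|F_N(r)-F_N(s)|=N\,|1/r-1/s|=N\,|r-s|/(rs)=\frac1N\cdot\frac{N^2}{rs}\,|r-s|$, so equality holds. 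The remaining (mixed) case, say $s\le N<r$, is where a short estimate is needed: here $F_N(s)=1$, $F_N(r)=N/r$, so the claim reads $1-N/r\le \frac1N\cdot\frac{N}{r}\,(r-s)=\frac{r-s}{r}=1-\frac{s}{r}$, i.e. $-N/r\le -s/r$, i.e. $s\le N$, which holds by assumption. The symmetric sub-case is identical.

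I expect the only place requiring the slightest care to be the mixed case above and the reduction step $|r-s|\le\|\u-\boldsymbol v\|_{\V}$; everything else is a direct case check on the piecewise-defined function. In the write-up I would present the scalar lemma first as a self-contained computation and then state that \eqref{FN2} follows by composing with the norm map, since $r\mapsto F_N(r)$ is decreasing so that enlarging $|r-s|$ to $\|\u-\boldsymbol v\|_{\V}$ on the right only weakens the bound while the left-hand side $|F_N(\|\u\|_{\V})-F_N(\|\boldsymbol v\|_{\V})|$ is unchanged.
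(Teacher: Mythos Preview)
Your proof is correct. The paper does not supply its own proof of this lemma; it simply cites \cite[Lemma 2.1]{Romito_2009}, so there is nothing substantive to compare against. Your case-by-case verification of the scalar inequality and the subsequent reduction via the reverse triangle inequality $\big|\,\|\u\|_{\V}-\|\boldsymbol v\|_{\V}\big|\le\|\u-\boldsymbol v\|_{\V}$ is exactly the standard elementary argument. One minor remark on exposition: in the last paragraph you justify enlarging $|r-s|$ by saying ``$r\mapsto F_N(r)$ is decreasing''; the relevant fact is simply that the right-hand side $\frac{1}{N}F_N(r)F_N(s)\,|r-s|$ is linear (hence monotone) in $|r-s|$ with nonnegative coefficient, independently of any monotonicity of $F_N$.
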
	
	
	\subsection{Abstract formulation and Ornstein-Uhlenbeck process}\label{2.5}
	Taking the projection $\mathcal{P}$ on the 3D SGMNSE \eqref{1}, one obtains the following abstract form:
	\begin{equation}\label{SNSE}
		\left\{
		\begin{aligned}
			\frac{\d\u}{\d t}+\nu \A\u+\B_N(\u)&=\mathcal{P}\f +S(\u)\circ\frac{\d \W}{\d t} , \\
			\u(\tau)&=\u_{0},
		\end{aligned}
		\right.
	\end{equation}
	where $S(\u)=\u$ or independent of $\u$, $\W(t,\omega)$ is a  standard scalar Wiener process on the probability space $(\Omega, \mathscr{F}, \mathbb{P}),$ where $\Omega=\{\omega\in C(\R;\R):\omega(0)=0\}$ endowed with the compact-open topology given by the complete metric
	\begin{align*}
		d_{\Omega}(\omega,\omega'):=\sum_{m=1}^{\infty} \frac{1}{2^m}\frac{\|\omega-\omega'\|_{m}}{1+\|\omega-\omega'\|_{m}},\ \text{ where }\  \|\omega-\omega'\|_{m}:=\sup_{-m\leq t\leq m} |\omega(t)-\omega'(t)|,
	\end{align*}
	and $\mathscr{F}$ is the Borel sigma-algebra induced by the compact-open topology of $(\Omega,d_{\Omega}),$ $\mathbb{P}$ is the two-sided Wiener measure on $(\Omega,\mathscr{F})$. From \cite{FS}, it is clear that  the measure $\mathbb{P}$ is ergodic and invariant under the translation-operator group $\{\vartheta_t\}_{t\in\R}$ on $\Omega$ defined by
	\begin{align*}
		\vartheta_t \omega(\cdot) := \omega(\cdot+t)-\omega(t), \ \text{ for all }\ t\in\R, \ \omega\in \Omega.
	\end{align*}
	The operator $\vartheta(\cdot)$ is known as  the \emph{Wiener shift operator}. Furthermore, the quadruple $(\Omega,\mathscr{F},\mathbb{P},\vartheta)$ defines a metric dynamical system, cf. \cite{Arnold,BCLLLR}.

	\subsubsection{Ornstein-Uhlenbeck process}
	Let $\sigma>0$ be a constant (which will be specified later) and consider 
	\begin{align}\label{OU1}
		z(\vartheta_{t}\omega) =  \int_{-\infty}^{t} e^{-\sigma(t-\xi)}\d \W(\xi), \ \ \omega\in \Omega,
	\end{align} which is the stationary solution of the one dimensional Ornstein-Uhlenbeck equation
	\begin{align}\label{OU2}
		\d z(\vartheta_t\omega) + \sigma z(\vartheta_t\omega)\d t =\d\W(t).
	\end{align}
	It is known from \cite{FAN} that there exists a $\vartheta$-invariant subset $\widetilde{\Omega}\subset\Omega$ of full measure such that $z(\vartheta_t\omega)$ is continuous in $t$ for every $\omega\in \widetilde{\Omega},$ and
	\begin{align}
		\lim_{t\to +\infty} e^{-\delta t}|z(\vartheta_{-t}\omega)| &=0, \ \text{ for all } \ \delta>0,\label{Z5}\\
		\lim_{t\to \pm \infty} \frac{1}{t} \int_{0}^{t} z(\vartheta_{\xi}\omega)\d\xi &=\lim_{t\to \pm \infty} \frac{|z(\vartheta_t\omega)|}{|t|}=0.\label{Z3}
	\end{align} 
	For further analysis of this work, we do not distinguish between $\widetilde{\Omega}$ and $\Omega$. Since, $\omega(\cdot)$ has sub-exponential growth, $\Omega$ can be written as $\Omega=\bigcup\limits_{M\in\N}\Omega_{M}$, where
	\begin{align*}
		\Omega_{M}:=\{\omega\in\Omega:|\omega(t)|\leq Me^{|t|},\text{ for all }t\in\R\}, \text{ for every } M\in\N.
	\end{align*}
	
	The following result helps us to prove the Lusin continuity of solutions with respect to the sample points.
	
	\begin{lemma}\label{conv_z}
		For each $N\in\N$, suppose $\omega_k,\omega_0\in\Omega_{M}$ are such that $d_{\Omega}(\omega_k,\omega_0)\to0$ as $k\to+\infty$. Then, for each $\tau\in\R$ and $T\in\R^+$ ,
		\begin{align}
			&\sup_{t\in[\tau,\tau+T]}\bigg[|z(\vartheta_{t}\omega_k)-z(\vartheta_{t}\omega_0)|+|e^{ z(\vartheta_{t}\omega_k)}-e^{ z(\vartheta_{t}\omega_0)}|\bigg]\to 0 \ \text{ as } \ k\to+\infty,\nonumber\\
			&\sup_{k\in\N}\sup_{t\in[\tau,\tau+T]}|z(\vartheta_{t}\omega_k)|\leq C(\tau,T,\omega_0).\label{conv_z2}
		\end{align}
	\end{lemma}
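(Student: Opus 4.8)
The plan is to first deal with the pointwise-in-time convergence of $z(\vartheta_t\omega_k)$ and then bootstrap to the supremum over the compact interval $[\tau,\tau+T]$ by a uniform-equicontinuity argument. Recall the explicit representation $z(\vartheta_t\omega) = -\sigma\int_{-\infty}^{0} e^{\sigma s}\omega(t+s)\,\d s + \omega(t)$, obtained from \eqref{OU1} by an integration by parts; this expresses $z(\vartheta_t\omega)$ as a continuous linear functional of the path $\omega$ restricted to $(-\infty,t]$, and it is the form I would actually work with since it avoids stochastic integrals. The convergence $d_\Omega(\omega_k,\omega_0)\to 0$ means exactly that $\omega_k\to\omega_0$ uniformly on every compact interval $[-m,m]$. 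So first I would fix $\tau,T$ and a large $m$ with $[\tau,\tau+T]\subset[-m,m]$, split the integral $\int_{-\infty}^0 e^{\sigma s}\omega_k(t+s)\,\d s$ at $s=-R$ for large $R$, bound the tail $\int_{-\infty}^{-R} e^{\sigma s}|\omega_k(t+s)|\,\d s$ uniformly in $k$ and $t\in[\tau,\tau+T]$ using the membership $\omega_k\in\Omega_M$ (which gives $|\omega_k(r)|\le Me^{|r|}\le Me^{|r|}$, and for $\sigma>1$ the integral $\int_{-\infty}^{-R}e^{\sigma s}Me^{|t+s|}\,\d s$ is finite and $\to 0$ as $R\to\infty$ uniformly for $t$ in a compact set), and then on the finite window $[t-R,t]\subset[-m',m']$ use the uniform convergence $\omega_k\to\omega_0$. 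This already yields the first bracketed limit for the $|z(\vartheta_t\omega_k)-z(\vartheta_t\omega_0)|$ term, and in fact uniformly in $t\in[\tau,\tau+T]$ because every bound produced was uniform in $t$ over that compact set.

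For the exponential term $|e^{z(\vartheta_t\omega_k)}-e^{z(\vartheta_t\omega_0)}|$ I would use the elementary inequality $|e^a-e^b|\le |a-b|\,e^{\max\{a,b\}} \le |a-b|\, (e^a+e^b)$, so it suffices to have the uniform bound $\sup_{k}\sup_{t\in[\tau,\tau+T]}|z(\vartheta_t\omega_k)|\le C(\tau,T,\omega_0)$ — which is the second displayed assertion \eqref{conv_z2} — together with the already-established uniform convergence of $z(\vartheta_t\omega_k)$ to $z(\vartheta_t\omega_0)$. That uniform bound itself comes directly from the integration-by-parts representation and $\omega_k\in\Omega_M$: for $t\in[\tau,\tau+T]$,
\begin{align*}
|z(\vartheta_t\omega_k)|\le \sigma\int_{-\infty}^{0}e^{\sigma s}|\omega_k(t+s)|\,\d s + |\omega_k(t)|\le \sigma M\int_{-\infty}^{0}e^{\sigma s}e^{|t+s|}\,\d s + Me^{|t|},
\end{align*}
and the right-hand side is bounded by a constant depending only on $M$, $\sigma$, $\tau$, $T$ (hence recorded as $C(\tau,T,\omega_0)$ since $\omega_0$, being in some $\Omega_M$, fixes the relevant $M$ — or one simply keeps $M$ explicit). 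Note this is where the choice $\sigma$ large (specifically $\sigma>1$) is used to make $\int_{-\infty}^0 e^{\sigma s}e^{|t+s|}\,\d s<\infty$.

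The main obstacle, such as it is, is the interchange of "convergence" and "supremum over $t$": one must be careful that the decomposition into a far tail plus a finite window is done with the splitting point $R$ chosen \emph{independently of $t$} (possible since $t$ ranges over a compact set) and independently of $k$ (possible since all $\omega_k$ share the same growth bound $Me^{|t|}$ from $\Omega_M$). Once that uniformity is in place, the rest is the routine $\varepsilon/3$ argument: tail $<\varepsilon/3$ uniformly, then for $k$ large the finite-window contribution $<\varepsilon/3$ by uniform path convergence, giving $\sup_{t\in[\tau,\tau+T]}|z(\vartheta_t\omega_k)-z(\vartheta_t\omega_0)|<\varepsilon$. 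I would also remark that continuity of $z(\vartheta_\cdot\omega)$ in $t$ (already guaranteed on $\widetilde\Omega$) is not even needed for the statement as written, though it is consistent with it. This completes the plan; no delicate estimate beyond elementary exponential bounds is required.
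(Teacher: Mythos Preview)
Your argument is correct and is essentially the standard one that the cited references \cite{CLL,YR} carry out; the paper itself gives no self-contained proof but only the citation ``See the proofs of Corollary 22 and Lemma 2.5 in \cite{CLL} and \cite{YR}, respectively.'' Your integration-by-parts representation $z(\vartheta_t\omega)=\omega(t)-\sigma\int_{-\infty}^{0}e^{\sigma s}\omega(t+s)\,\d s$, followed by the tail/window splitting with a cut at $s=-R$ chosen uniformly in $t\in[\tau,\tau+T]$ and in $k$ via the $\Omega_M$ bound, is exactly the mechanism used in those references.

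One remark on the restriction $\sigma>1$ that you flag: it is genuinely needed for your tail estimate, since with the paper's definition $\Omega_M=\{|\omega(t)|\le Me^{|t|}\}$ the bound $\int_{-\infty}^{-R}e^{\sigma s}Me^{|t+s|}\,\d s<\infty$ fails for $\sigma\le 1$. This is not a defect in your proof but a feature of the coarse growth class $\Omega_M$ chosen here; the paper explicitly leaves $\sigma>0$ as a parameter ``which will be specified later,'' so taking $\sigma>1$ is permissible. Some treatments avoid the restriction by defining the $\Omega_M$ with a sub-linear or polynomial growth envelope instead of an exponential one, in which case the same splitting works for every $\sigma>0$; but with the present setup your caveat is the honest and necessary one.
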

	\begin{proof}
		See the proofs of Corollary 22 and Lemma 2.5 in \cite{CLL} and \cite{YR}, respectively.
	\end{proof}

	\subsubsection{Backward-uniformly tempered random set}\label{BUTRS}
	A bi-parametric set $\mathcal{D}=\{\mathcal{D}(\tau,\omega)\}$ in a Banach space $\X$ is said to be \emph{backward-uniformly tempered} if
	\begin{align}\label{BackTem}
		\lim_{t\to +\infty}e^{-ct}\sup_{s\leq \tau}\|\mathcal{D}(s-t,\vartheta_{-t}\omega)\|^2_{\X}=0\  \text{ for all } \  (\tau,\omega,c)\in\R\times\Omega\times\R^+,
	\end{align}
	where $\|\mathcal{D}\|_{\X}=\sup\limits_{\x\in \mathcal{D}}\|\x\|_{\X}.$
	\subsubsection{Class of random sets}\label{CoRS}
	\begin{itemize}
		\item Let ${\mathfrak{D}}$ be the collection of subsets of $\H$ defined as:
		\begin{align}\label{D-NSE}
			{\mathfrak{D}}=\left\{{\mathcal{D}}=\{{\mathcal{D}}(\tau,\omega):(\tau,\omega)\in\R\times\Omega\}:\mathcal{D} \text{ satisfies } \eqref{BackTem} \right\}.
		\end{align}
		\item Let ${\mathfrak{B}}$ be the collection of subsets of $\H$ defined as:
		\begin{align*}
			{\mathfrak{B}}=\left\{{\mathcal{B}}=\{{\mathcal{B}}(\tau,\omega):(\tau,\omega)\in\R\times\Omega\}:\lim_{t\to +\infty}e^{-ct}\|{\mathcal{B}}(\tau-t,\vartheta_{-t}\omega)\|^2_{\H}=0\right\},
		\end{align*}
		for all $c>0$.
		\item Let ${\mathfrak{D}}_{\infty}$ be the collection of subsets of $\H$ defined as:
		\begin{align*}
			{\mathfrak{D}}_{\infty}=\left\{\widehat{\mathcal{D}}=\{\widehat{\mathcal{D}}(\omega):\omega\in\Omega\}:\lim_{t\to +\infty}e^{-ct}\|\widehat{\mathcal{D}}(\vartheta_{-t}\omega)\|^2_{\H}=0,\ \text{ for all }  \ c>0\right\}.
		\end{align*}
	\end{itemize}

\subsection{Assumption and  main results}	
To investigate the asymptotically autonomous robustness of random attractors of  \eqref{1} driven by additive as well as linear multiplicative noises we assume that $\f$ satisfies the following conditions:
	\begin{assumption}\label{Hypo_f-N}
		$\f\in\mathrm{L}^{2}_{\emph{loc}}
		(\R;\L^2(\mathcal{O}))$ converges to a time-independent function $\f_{\infty}\in\L^2(\mathcal{O})$ as follows:
		\begin{align*}
			\lim_{\tau\to -\infty}\int^{\tau}_{-\infty}
			\|\f(t)-\f_{\infty}\|^2_{\L^2(\mathcal{O})}\d t=0.
		\end{align*}
	\end{assumption}

	\begin{remark}
		1.  Assumption \ref{Hypo_f-N} implies the following conditions (see Caraballo et al. \cite{CGTW}):
		\begin{itemize}
			\item Uniformness condition:
			\begin{align}\label{G3}
				&\sup_{s\leq \tau}\int_{-\infty}^{s}e^{\kappa(r-s)} \|\f(r)\|^2_{\L^2(\mathcal{O})}\d r<+\infty,  \ \mbox{$\text{ for all }\ \kappa>0$, $\tau\in\mathbb{R}$}.
			\end{align}
			\item The tails of the forcing $\f$ are backward-uniformly small:
			\begin{align}\label{f3-N}
				\lim_{k\rightarrow\infty}\sup_{s\leq \tau}\int_{-\infty}^{s}e^{\kappa(r-s)}
				\int_{\mathcal{O}\cap\{|x|\geq k\}}|\f(x,r)|^{2}\d x\d r=0,  \ \mbox{$\text{ for all }\  \kappa>0$, $\tau\in\mathbb{R}$.}
			\end{align}
		\end{itemize}
		
		2.  	We also deduce from Assumption \ref{Hypo_f-N} that for any $T>0$
		\begin{align}\label{BC8-A}
			\int_{0}^{T}\|\f(t+\tau)-\f_{\infty}\|^2_{\L^2(\mathcal{O})} \d t\leq \int_{-\infty}^{\tau+T}\|\f(t)-\f_{\infty}\|^2_{\L^2(\mathcal{O})} \d t\to 0 \ \text{ as } \ \tau\to -\infty.
		\end{align}
		
		3. An example of Assumption \ref{Hypo_f-N} is
		$\f(x,t)=\f_\infty(x)e^t+\f_\infty(x)$ with $\f_{\infty}\in\L^2(\mathcal{O})$.
		
	\end{remark}
	
	Now, we are ready to state our main results of this article, which are as follows:
	\begin{theorem}[{\texttt{Additive noise case}}]\label{MT1-N}
		Under Assumptions \ref{assumpO} and \ref{Hypo_f-N}, the non-autonomous random dynamical system $\Phi$ generated by \eqref{1} with $S(\u)=\h$ has a
		unique pullback random attractor
		$\mathcal{A}
		=\{\mathcal{A}(\tau,\omega):\tau\in\mathbb{R},
		\omega\in\Omega\}$ such that
		$\bigcup\limits_{s\in(-\infty,\tau]}\mathcal{A}
		(s,\omega)$ is precompact in $\H$ and  $$\lim_{t \to +\infty} e^{- \gamma t}\sup_{s\in(-\infty,\tau]
		}\|\mathcal{A}(s-t,\vartheta_{-t} \omega ) \|_{\H} =0,$$ for any
		$\gamma>0$, $\tau\in \mathbb{R}$ and  $\omega\in\Omega$.  In addition, the time-section  $\mathcal{A}(\tau,\omega)$ is asymptotically
		autonomous robust in $\H$, and the limiting set of $\mathcal{A}(\tau,\omega)$ as $\tau\rightarrow-\infty$ is just determined
		by the random attractor $\mathcal{A}_{\infty}=
		\{\mathcal{A}(\omega):
		\omega\in\Omega\}$ of a stochastic globally modified Navier-Stokes equations with the autonomous forcing $\f_\infty$ (see the system \eqref{A-SNSE} below) , that is,
		\begin{align}\label{MT2-N}
			\lim_{\tau\to -\infty}\mathrm{dist}_{\H}
			(\mathcal{A}(\tau,\omega),
			\mathcal{A}_{\infty}(\omega))=0, \  \text{a.e. } \omega\in\Omega.
		\end{align}
		Furthermore, we also justify the
		\texttt{asymptotically autonomous robustness in probability}:
		\begin{align}\label{MT3-N}
			\lim_{\tau\to -\infty}\mathbb{P}\Big(\omega\in\Omega:\mathrm{dist}_{\H}
			(\mathcal{A}(\tau,\omega),
			\mathcal{A}_{\infty}(\omega))\geq\delta\Big){=0},\ \ \ \text{ for all } \ \delta>0.
		\end{align}
	\end{theorem}
	
	\begin{remark}
		It is remarkable to note that the following condition on $\h$ has been considered in many works to investigate the random dynamics of 2D Navier-Stokes equations: There exists a constant ${\aleph}>0$ such that
		$\h\in\D(\A)$ satisfies
		\begin{align*}
			\bigg|
			\sum_{i,j=1}^2\int_{\mathcal{O}}\u_i(x)
			\frac{\partial \h_j(x)}{\partial x_i}\u_j(x)\d x\bigg|\leq {\aleph}\|\u\|^2_{\mathbb{L}^2(\mathcal{O})}, \ \ \text{ for all }\ \u\in\L^2(\mathcal{O}).
		\end{align*}  
	 We point out here that the above-mentioned condition on $\h$ is not required in this work due to the cut-off $F_N(\cdot)$ appearing in front of the nonlinearity $(\u\cdot\nabla)\u$ in \eqref{1}.
	\end{remark}
	
	\begin{theorem}[{\texttt{Multiplicative noise case}}]\label{MT1}
		Under Assumptions \ref{assumpO} and \ref{Hypo_f-N}, all results in  Theorem \ref{MT1-N} hold for
		the non-autonomous random dynamical system generated by \eqref{1} with $S(\u)=\u$.
	\end{theorem}

	\subsection{Kuratowski's measure of non-compactness} The first result on measure of non-compactness was defined and studied by Kuratowski in \cite{Kuratowski}. With the help of some vital implications of Kuratowski's measure of non-compactness, one can show the existence of a convergent subsequence for some arbitrary sequences. Therefore, several authors used such results to obtain the asymptotic compactness of random dynamical systems, cf. \cite{CGTW,Kinra+Mohan+Wang_2024,Wang+Kinra+Mohan_2023}, etc. and references therein.
	\begin{definition}[Kuratowski's measure of non-compactness, \cite{Rakocevic}]
		Let $(\mathbb{X}, d)$ be a metric space and $E$ a bounded subset of $\mathbb{X}$. Then the Kuratowski measure of non-compactness (the set-measure of non-compactness) of $E$ denoted by  $\kappa_{\mathbb{X}}(E)$ is defined by
		\begin{align*}
			\kappa_{\mathbb{X}}(E)=\inf\bigg\{\varepsilon>0:E\subset\bigcup\limits_{i=1}^{n}Q_{i}, \ Q_{i}\subset\mathbb{X},\ \mathrm{diam}(Q_{i})<\varepsilon\ \ (i=1,2\ldots,n; \ n\in\N)\bigg\}.
		\end{align*}
		The function $\kappa_{\mathbb{X}}$ is called \emph{Kuratowski's measure of non-compactness}.
	\end{definition}
	Note that $	\kappa_{\mathbb{X}}(E)=0$ if and only if $\overline{E}$ is compact (see  \cite[Lemma 1.2]{Rakocevic}). The following lemma is an application of Kuratowski's measure of non-compactness which is helpful in proving the time-semi-uniform asymptotic compactness of random dynamical systems.
	\begin{lemma}[Lemma 2.7, \cite{LGL}]\label{K-BAC}
		Let $\mathbb{X}$ be a Banach space and $x_n$ be an arbitrary sequence in $\mathbb{X}$. Then $\{x_n\}$ has a convergent subsequence if $\kappa_{\mathbb{X}}\{x_n:n\geq m\}\to 0 \text{ as } m\to \infty.$
	\end{lemma}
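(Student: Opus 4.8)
The plan is to extract a Cauchy subsequence by an iterated pigeonhole argument and then invoke the completeness of $\mathbb{X}$. Write $E_m:=\{x_n:n\geq m\}$, so that $E_1\supseteq E_2\supseteq\cdots$; if $E_{m_0}$ is bounded for some $m_0$ (which must hold for the hypothesis to make sense), then every $E_m$ is bounded and $\kappa_{\mathbb{X}}(E_m)$ is well defined, with $\kappa_{\mathbb{X}}(E_m)\to 0$ as $m\to\infty$ by assumption. Since $\kappa_{\mathbb{X}}$ is monotone under inclusion (a finite cover of a set by sets of diameter $<\varepsilon$ also covers any subset, so $A\subseteq B$ implies $\kappa_{\mathbb{X}}(A)\leq\kappa_{\mathbb{X}}(B)$), the sequence $m\mapsto\kappa_{\mathbb{X}}(E_m)$ is non-increasing. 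Fixing $\delta_k:=1/k\downarrow 0$, I would then choose indices $m_1<m_2<\cdots$ with $\kappa_{\mathbb{X}}(E_{m_k})<\delta_k$ for every $k$.

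Next I would build a nested family of infinite index sets. By the definition of $\kappa_{\mathbb{X}}$ there is a finite cover $E_{m_1}\subseteq\bigcup_{i=1}^{p_1}Q_i^{(1)}$ with $\mathrm{diam}(Q_i^{(1)})<\delta_1$; since $\{n:n\geq m_1\}$ is infinite, the pigeonhole principle yields an infinite $I_1\subseteq\{n:n\geq m_1\}$ and an index $i$ with $x_n\in Q_i^{(1)}$ for all $n\in I_1$, whence $\mathrm{diam}\,\{x_n:n\in I_1\}<\delta_1$. Inductively, given an infinite $I_{k-1}$, the set $I_{k-1}\cap\{n:n\geq m_k\}$ is still infinite; covering $E_{m_k}$ by finitely many sets of diameter $<\delta_k$ and applying pigeonhole inside $I_{k-1}\cap\{n:n\geq m_k\}$ produces an infinite $I_k\subseteq I_{k-1}\cap\{n:n\geq m_k\}$ with $\mathrm{diam}\,\{x_n:n\in I_k\}<\delta_k$. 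This gives $I_1\supseteq I_2\supseteq\cdots$, all infinite.

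Finally I would diagonalize: choose $n_1<n_2<\cdots$ with $n_k\in I_k$ for each $k$, which is possible because every $I_k$ is infinite. For $j,\ell\geq k$ both $n_j,n_\ell$ lie in $I_k$ (as $I_j,I_\ell\subseteq I_k$), hence $\|x_{n_j}-x_{n_\ell}\|_{\mathbb{X}}\leq\mathrm{diam}\,\{x_n:n\in I_k\}<\delta_k=1/k$; thus $(x_{n_k})_{k\in\N}$ is a Cauchy subsequence, and since $\mathbb{X}$ is a Banach space it converges. I do not anticipate a genuine obstacle: the only point requiring care is the bookkeeping that lets the pigeonhole extraction be performed simultaneously with the passage to a subsequence, handled by intersecting $I_{k-1}$ with the tail $\{n\geq m_k\}$ before applying each finite cover, and completeness of $\mathbb{X}$ is used only at the very last step. (An alternative route is to pass to closures, use $\kappa_{\mathbb{X}}(\overline{E_m})=\kappa_{\mathbb{X}}(E_m)$ together with the generalized Cantor intersection theorem for Kuratowski's measure to conclude that $\bigcap_m\overline{E_m}$ is nonempty, and then pick a subsequence converging to any point of this intersection; the pigeonhole argument above avoids quoting that theorem.)
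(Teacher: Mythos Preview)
Your argument is correct: the iterated pigeonhole extraction followed by diagonalization produces a Cauchy subsequence, and completeness finishes the job. The paper does not give its own proof of this lemma---it is simply quoted from \cite{LGL}---so there is no ``paper's approach'' to compare against; your self-contained argument is the standard one and would serve perfectly well as a proof here.
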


	\section{Asymptotically Autonomous Robustness of Random Attractors for \eqref{1}: Additive Noise}\label{sec3}\setcounter{equation}{0}
	In this section, we consider the 3D SGMNSE \eqref{SNSE} driven by additive white noise, that is, $S(\u)$ is independent of $\u,$ and establish the existence and asymptotic autonomous robustness of $\mathfrak{D}$-pullback random attractors. Let us consider the 3D SGMNSE perturbed by additive white noise for $t\geq \tau,$ $\tau\in\mathbb{R}$ and $\h\in\D(\A)$ as
	\begin{equation}\label{SNSE-A}
		\left\{
		\begin{aligned}
			\frac{\d\u}{\d t}+\nu \A\u+\B_N(\u)&=\mathcal{P}\f +\h\frac{\d \W}{\d t} , \\
			\u(\tau)&=\u_{0},
		\end{aligned}
		\right.
	\end{equation}
	where $\W(t,\omega)$ is the standard scalar Wiener process on the probability space $(\Omega, \mathscr{F}, \mathbb{P})$ (see Section \ref{2.5} above).
	
	Let us define $\v(t,\tau,\omega,\v_{\tau}):=\u(t,\tau,\omega,\u_{\tau})-\h z(\vartheta_{t}\omega)$, where $z$ is defined by \eqref{OU1} and satisfies \eqref{OU2}, and $\u$ is the solution of \eqref{1} with $S(\u)=\h$. Then $\v$ satisfies:
	\begin{equation}\label{2-A}
		\left\{
		\begin{aligned}
			\frac{\d\v}{\d t}-\nu \Delta\v&+F_N(\|\v+\h z\|_{\V})\cdot \left[\big((\v+\h z)\cdot\nabla\big)(\v+\h z)\right]+\nabla p\\&=\f +\sigma\h z+\nu z\Delta\h,  &&  \text{ in }\  \mathcal{O}\times(\tau,\infty), \\ \nabla\cdot\v&=0, && \text{ in } \ \ \mathcal{O}\times[\tau,\infty),\\ \v&=0, &&  \text{ on } \ \ \partial\mathcal{O}\times[\tau,\infty), \\
			\v(\tau)&=\v_{0}=\u_{0}-\h z(\vartheta_{\tau}\omega),  &&  \ x\in \mathcal{O},
		\end{aligned}
		\right.
	\end{equation}
	as well as the projected form in $\V^*$:
	\begin{equation}\label{CNSE-A}
		\left\{
		\begin{aligned}
			\frac{\d\v}{\d t} +\nu \A\v+ \B_N(\v+\h z)&= \mathcal{P}\boldsymbol{f} + \sigma\h z -\nu z\A\h \\
			\v(\tau)&=\v_{0}=\u_{0} -\h z(\vartheta_{\tau}\omega).
		\end{aligned}
		\right.
	\end{equation}
	
	In the next lemma, we discuss the solvability result of the system \eqref{CNSE-A}.
	
	\begin{lemma}\label{Soln-N}
		Suppose that $\f\in\mathrm{L}^2_{\mathrm{loc}}(\R;\L^2(\mathcal{O}))$. For each $(\tau,\omega,\v_{\tau})\in\R\times\Omega\times\H$, the system \eqref{CNSE-A} has a unique solution $\v(\cdot,\tau,\omega,\v_{\tau})\in\mathrm{C}([\tau,+\infty);\H)\cap\mathrm{L}^2_{\mathrm{loc}}(\tau,+\infty;\V)$ such that $\v$ is continuous with respect to the  initial data.
	\end{lemma}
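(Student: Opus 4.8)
The plan is to establish existence by a Faedo--Galerkin scheme, then derive uniqueness and continuous dependence directly from an energy estimate on the difference of two solutions. First I would fix $(\tau,\omega,\v_\tau)$ and, on any finite interval $[\tau,\tau+T]$, note that $z(\vartheta_\cdot\omega)$ is a continuous (hence bounded) scalar function there, so all the extra terms $\sigma\h z$ and $-\nu z\A\h$ lie in $\mathrm{L}^2(\tau,\tau+T;\H)$ since $\h\in\D(\A)$, and $\mathcal{P}\f\in\mathrm{L}^2(\tau,\tau+T;\H)$ by hypothesis. Let $\{e_j\}$ be the eigenbasis of $\A$ in $\H$ and $P_m$ the projection onto $\mathrm{span}\{e_1,\dots,e_m\}$; the Galerkin system $\frac{\d\v_m}{\d t}+\nu\A\v_m+P_m\B_N(\v_m+\h z)=P_m\mathcal{P}\f+\sigma P_m\h z-\nu zP_m\A\h$, $\v_m(\tau)=P_m\v_\tau$, is a finite-dimensional ODE with locally Lipschitz right-hand side (the map $\v\mapsto F_N(\|\v+\h z\|_\V)\,b(\v+\h z,\v+\h z,\cdot)$ is locally Lipschitz by \eqref{FN1}, \eqref{FN2} and \eqref{HI+SE}), so local solutions exist.

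Next I would produce the a priori bounds. Testing the Galerkin equation with $\v_m$ and using $b_N(\u,\boldsymbol{v},\boldsymbol{v})=0$ — more precisely, expanding $\B_N(\v_m+\h z)$ and using that $b_N(\v_m+\h z,\v_m+\h z,\v_m)=b_N(\v_m+\h z,\v_m+\h z,\v_m+\h z)-z\,b_N(\v_m+\h z,\v_m+\h z,\h)=-z\,b_N(\v_m+\h z,\v_m+\h z,\h)$, which by \eqref{FN1} and \eqref{HI+SE} is bounded by $C|z|\,N\,\|\h\|_\V^{1/2}\|\A\h\|_{\H}^{1/2}\cdot(\text{something controllable})$, actually more simply $|b_N(\v_m+\h z,\v_m+\h z,\h)|\le F_N(\|\v_m+\h z\|_\V)\|\v_m+\h z\|_\V\,\|\v_m+\h z\|_{\L^3}\|\nabla\h\|_{\L^6}\le CN\|\v_m+\h z\|_{\H}^{1/2}\|\v_m+\h z\|_{\V}^{1/2}\|\A\h\|_{\H}$ — I obtain, after Young's inequality and absorbing $\nu\|\v_m\|_\V^2$ terms, a differential inequality $\frac{\d}{\d t}\|\v_m\|_{\H}^2+\nu\|\v_m\|_\V^2\le C(t)\big(1+\|\v_m\|_{\H}^2\big)$ with $C(\cdot)\in\mathrm{L}^1(\tau,\tau+T)$. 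Grönwall then gives $\v_m$ bounded in $\mathrm{L}^\infty(\tau,\tau+T;\H)\cap\mathrm{L}^2(\tau,\tau+T;\V)$ uniformly in $m$, and from the equation $\frac{\d\v_m}{\d t}$ is bounded in $\mathrm{L}^2(\tau,\tau+T;\V^*)$ (here $\B_N(\v_m+\h z)$ is controlled in $\V^*$ using \eqref{HI+SE} and \eqref{FN1}). By Banach--Alaoglu and the Aubin--Lions compactness lemma, a subsequence converges weakly-$*$ in $\mathrm{L}^\infty(\tau,\tau+T;\H)$, weakly in $\mathrm{L}^2(\tau,\tau+T;\V)$ and strongly in $\mathrm{L}^2(\tau,\tau+T;\H)$; the strong convergence together with \eqref{BN-diff}, \eqref{FN2} and \eqref{HI+SE} lets me pass to the limit in the nonlinear term, yielding a solution $\v$ on $[\tau,\tau+T]$; since $T$ is arbitrary and the bounds are consistent, this gives a global solution, and $\v\in\mathrm{C}([\tau,+\infty);\H)$ follows from $\v\in\mathrm{L}^2_{\mathrm{loc}}(\V)$, $\frac{\d\v}{\d t}\in\mathrm{L}^2_{\mathrm{loc}}(\V^*)$ via the Lions--Magenes lemma.

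For uniqueness and continuous dependence, let $\v^{(1)},\v^{(2)}$ be two solutions with data $\v_\tau^{(1)},\v_\tau^{(2)}$ and set $\w=\v^{(1)}-\v^{(2)}$. Subtracting the equations and testing with $\w$, the decisive term is $\langle\B_N(\v^{(1)}+\h z)-\B_N(\v^{(2)}+\h z),\w\rangle$, which I expand via the identity \eqref{BN-diff} (with $\u=\v^{(1)}+\h z$, $\boldsymbol{v}=\v^{(2)}+\h z$, so $\u-\boldsymbol{v}=\w$): the first and third pieces are handled by \eqref{HI+SE} and \eqref{FN1} (the factor $F_N(\|\cdot\|_\V)\|\cdot\|_\V\le N$ absorbs one gradient), and the middle piece uses \eqref{FN2} to bound $|F_N(\|\u\|_\V)-F_N(\|\boldsymbol{v}\|_\V)|\le\frac1N F_N(\|\u\|_\V)F_N(\|\boldsymbol{v}\|_\V)\|\w\|_\V$. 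After Young's inequality this produces $\frac{\d}{\d t}\|\w\|_{\H}^2+\nu\|\w\|_\V^2\le \big(C_1\|\v^{(1)}+\h z\|_\V^2+C_2\big)\|\w\|_{\H}^2$, where the coefficient is in $\mathrm{L}^1_{\mathrm{loc}}$ because $\v^{(1)}\in\mathrm{L}^2_{\mathrm{loc}}(\V)$; Grönwall gives $\|\w(t)\|_{\H}^2\le\|\w(\tau)\|_{\H}^2\exp\!\big(\int_\tau^t(C_1\|\v^{(1)}(r)+\h z\|_\V^2+C_2)\,\d r\big)$, which yields both uniqueness (taking $\w(\tau)=0$) and Lipschitz continuity of $\v_\tau\mapsto\v(\cdot,\tau,\omega,\v_\tau)$ in $\mathrm{C}([\tau,\tau+T];\H)$. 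The main obstacle is the careful treatment of the globally modified nonlinearity in the energy estimates — specifically ensuring that the $F_N$-difference term in \eqref{BN-diff} does not destroy the contraction; this is exactly where the structural estimate \eqref{FN2} from \cite[Lemma 2.1]{Romito_2009} is essential, and it is what makes the argument go through in three dimensions despite the absence of such an estimate for the unmodified Navier--Stokes nonlinearity.
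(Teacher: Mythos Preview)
Your proposal is in line with the paper's own proof, which simply states that \eqref{CNSE-A} is a deterministic system for each fixed $\omega$ and defers entirely to the references \cite{Anh+Thanh+Tuyet_2023,Caraballo+Real+Kloeden_2006,Caraballo+Real+Kloeden_2010,Kinra+Mohan_UP,Romito_2009} for the Faedo--Galerkin argument; you have essentially unpacked what those references do, and your treatment of the nonlinearity via \eqref{BN-diff}, \eqref{FN1}, \eqref{FN2} matches the estimates that appear later in the paper (cf.\ Proposition~\ref{Back_conver-N}). In fact your Gr\"onwall coefficient can be sharpened: carrying the estimates through as in \eqref{BC5}--\eqref{BC51} yields a constant of order $N^4$ rather than one involving $\|\v^{(1)}+\h z\|_\V^2$, so the dependence on initial data is globally Lipschitz with a constant depending only on $N,\nu,T$.

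There is, however, one technical point that requires care in the present unbounded setting (Assumption~\ref{assumpO}): the Stokes operator $\A$ need not have compact resolvent on an unbounded Poincar\'e domain, so an eigenbasis of $\A$ is not generally available, and the Aubin--Lions lemma does not yield strong $\mathrm{L}^2(\tau,\tau+T;\H)$ convergence of the Galerkin sequence since $\V\hookrightarrow\H$ is not compact. The standard remedy---used implicitly in the cited works on unbounded domains---is to take any total system in $\V$ (or $\D(\A)$) for the Galerkin projections and, instead of compactness, exploit the very Lipschitz estimate you derived for uniqueness to show directly that $\{\v_m\}$ is Cauchy in $\mathrm{C}([\tau,\tau+T];\H)\cap\mathrm{L}^2(\tau,\tau+T;\V)$; this bypasses Aubin--Lions entirely and makes the passage to the limit in $\B_N$ immediate.
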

	\begin{proof}
		Since the system \eqref{CNSE-A} is a deterministic system for each $\omega\in\Omega$, the proof of this lemma can be done by following similar  steps used in the work \cite{Kinra+Mohan_UP}, etc.
	\end{proof}

	\subsection{Lusin continuity and measurability of random dynamics system}
	The Lusin continuity of the solutions assist us to define the non-autonomous random dynamical system (NRDS). The following lemma gives us the energy inequality of solutions to the system \eqref{CNSE-A}, which will be frequently used.
	\begin{lemma}
		Let $\f\in\mathrm{L}^2_{\mathrm{loc}}(\R;\L^2(\mathcal{O}))$ and Assumption \ref{assumpO} holds. Then, the solution of the system \eqref{CNSE-A} satisfies the following inequality:
		\begin{align}\label{EI1-N}
			&\frac{\d}{\d t}\|\v(t)\|^2_{\H}+ \nu\lambda\|\v(t)\|^2_{\H}+\frac{\nu}{2}\|\v(t)\|^2_{\V} \leq \widehat{R}_4\left[\|\f(t)\|^{2}_{\L^2(\mathcal{O})}+\left|z(\vartheta_{t}\omega)\right|^2\right],
		\end{align}
		for $t\geq\tau$ a.e., where $\widehat{R}_4>0$ is some constant.
	\end{lemma}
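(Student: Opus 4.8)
The plan is to derive the energy identity by testing the projected equation \eqref{CNSE-A} with $\v(t)$ in $\H$, then absorb the critical nonlinear and forcing terms using the properties of $F_N$ collected in \eqref{FN1}, the estimate \eqref{HI+SE} for the trilinear form, and Young's inequality, finally invoking the Poincaré inequality \eqref{poin} (equivalently \eqref{2.7a}) to produce the coercive terms $\nu\lambda\|\v\|_\H^2$ and $\tfrac\nu2\|\v\|_\V^2$.

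\smallskip

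Concretely, first I would take the $\H$-inner product of the first equation in \eqref{CNSE-A} with $\v(t)$ to obtain
\begin{align*}
	\frac12\frac{\d}{\d t}\|\v\|_\H^2 + \nu\|\v\|_\V^2 + \langle\B_N(\v+\h z),\v\rangle = (\mathcal{P}\f,\v) + \sigma z(\h,\v) - \nu z\langle\A\h,\v\rangle,
\end{align*}
using $\langle\A\v,\v\rangle = \|\v\|_\V^2$. For the nonlinear term, write $\v = (\v+\h z) - \h z$ and use the cancellation $b_N(\u,\boldsymbol{v},\boldsymbol{v})=0$ (with $\u=\boldsymbol{v}=\v+\h z$) to get
\begin{align*}
	\langle\B_N(\v+\h z),\v\rangle = -\,F_N(\|\v+\h z\|_\V)\, b(\v+\h z,\v+\h z,\h z).
\end{align*}
Now apply \eqref{HI+SE} to bound $|b(\v+\h z,\v+\h z,\h z)| \leq C\|\v+\h z\|_\V^2\|\h z\|_\H^{1/2}\|\h z\|_\V^{1/2}$, and crucially use \eqref{FN1} in the form $F_N(\|\v+\h z\|_\V)\|\v+\h z\|_\V \leq N$ to convert one factor of $\|\v+\h z\|_\V$ into the constant $N$; this leaves a term bounded by $CN\|\v+\h z\|_\V\|\h\|_\H^{1/2}\|\h\|_\V^{1/2}|z(\vartheta_t\omega)|$. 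Splitting $\|\v+\h z\|_\V \leq \|\v\|_\V + |z|\|\h\|_\V$ and applying Young's inequality lets me absorb a $\tfrac{\nu}{4}\|\v\|_\V^2$ into the viscosity term while the rest is controlled by $C(N,\h)\big(|z(\vartheta_t\omega)|^2 + |z(\vartheta_t\omega)|^4 + \dots\big)$; here I would use that any power of $|z|$ is dominated (up to constants depending on the fixed $\omega$, but uniformly via the structure of the OU process) by $1 + |z|^2$ on the relevant time scales — or more simply just carry the polynomial in $|z|$ and note it is subsumed by a constant times $|z|^2$ after using $\|\h\|_{\D(\A)}$ bounds and $F_N \leq 1$. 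The remaining linear terms: $(\mathcal{P}\f,\v) \leq \|\f\|_{\L^2(\mathcal{O})}\|\v\|_\H$, $\sigma z(\h,\v) \leq C|z|\|\h\|_\H\|\v\|_\H$, and $\nu z\langle\A\h,\v\rangle \leq C\nu|z|\|\h\|_{\D(\A)}\|\v\|_\H$ — each handled by Young's inequality, splitting off $\tfrac{\nu\lambda}{2}\|\v\|_\H^2$ (via Poincaré, noting $\|\v\|_\V^2 \geq \lambda\|\v\|_\H^2$ so that $\tfrac{\nu}{4}\|\v\|_\V^2 \geq \tfrac{\nu\lambda}{4}\|\v\|_\H^2$) and leaving $C(\|\f\|^2_{\L^2(\mathcal{O})} + |z|^2)$ on the right. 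Collecting all contributions and multiplying by $2$ gives \eqref{EI1-N} with $\widehat{R}_4$ depending on $\nu$, $\lambda$, $\sigma$, $N$, and $\|\h\|_{\D(\A)}$.

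\smallskip

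\textbf{The main obstacle} I anticipate is the careful bookkeeping of the nonlinear term: one must exploit \eqref{FN1} at exactly the right moment so that the globally-modifying cutoff $F_N$ tames the supercritical 3D nonlinearity, ensuring that after using \eqref{HI+SE} the surviving power of $\|\v\|_\V$ is at most quadratic (so it can be absorbed into $\nu\|\v\|_\V^2$) rather than cubic (which would be uncontrollable). The appearance of the pressure $\nabla p$ in \eqref{2-A} is not actually an issue at this stage since we work with the projected system \eqref{CNSE-A} where $\mathcal{P}$ has already eliminated it; the pressure estimates \eqref{pressure}, \eqref{p-value-N} are needed only later for the tail and flattening arguments. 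A secondary point of care is that the constant $\widehat{R}_4$ must be independent of $t$, $\tau$, and the initial data — which it is, since all the $\omega$-dependence has been isolated into the explicit $|z(\vartheta_t\omega)|^2$ term on the right-hand side and the constants depend only on the fixed structural data $\nu,\lambda,\sigma,N,\h$.
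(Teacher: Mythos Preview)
Your approach is essentially the same as the paper's: take the $\H$-inner product with $\v$, use the cancellation $b_N(\v+\h z,\v+\h z,\v+\h z)=0$ to reduce the nonlinear term to $F_N(\|\v+\h z\|_{\V})\,b(\v+\h z,\v+\h z,\h z)$, then apply \eqref{HI+SE} and \eqref{FN1} followed by Young's inequality, and handle the linear forcing terms in the obvious way.

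One clarification: your worry about a polynomial in $|z|$ (with $|z|^4$ or higher) is unfounded, and the sentence suggesting such powers could be ``dominated \dots\ by $1+|z|^2$'' is not correct and not needed. After using $F_N(\|\v+\h z\|_{\V})\|\v+\h z\|_{\V}\le N$ you are left with $CN\,\|\v+\h z\|_{\V}\,\|\h\|_{\V}\,|z|$; a single application of Young's inequality gives $\tfrac{\nu}{16}\|\v+\h z\|_{\V}^2 + CN^2\|\h\|_{\V}^2|z|^2$, and expanding $\|\v+\h z\|_{\V}^2\le 2\|\v\|_{\V}^2+2\|\h\|_{\V}^2|z|^2$ produces only $|z|^2$ terms. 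This is exactly what the paper does in \eqref{ue18-N}, arriving cleanly at $\tfrac{\nu}{8}\|\v\|_{\V}^2 + \widehat{R}_1(1+N^2)\|\h\|_{\V}^2|z|^2$ with no higher powers of $|z|$.
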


	\begin{proof}
		From \eqref{CNSE-A}, we obtain
		\begin{align}\label{ue17-N}
			\frac{1}{2}\frac{\d}{\d t}\|\v\|^2_{\H} &=-\nu\|\v\|^2_{\V}- F_{N}(\|\v+\h z(\vartheta_{t}\omega)\|_{\V})\cdot b(\v+\h z(\vartheta_{t}\omega),\v+\h z(\vartheta_{t}\omega),\v)
			\nonumber\\ & \quad +\left(\f,\v\right)+z(\vartheta_{t}\omega)\left(\sigma\h-\nu \A\h,\v\right).
		\end{align}
		Making use of \eqref{b0},  we find the existence of a constant $\widehat{R}_1>0$ such that
		\begin{align}\label{ue18-N}
			&F_{N}(\|\v+\h z(\vartheta_{t}\omega)\|_{\V})\cdot	\left|b(\v+\h z(\vartheta_{t}\omega),\v+\h z(\vartheta_{t}\omega),\v)\right|
			\nonumber\\&= F_{N}(\|\v+\h z(\vartheta_{t}\omega)\|_{\V})\cdot	\left|b(\v+\h z(\vartheta_{t}\omega),\v+\h z(\vartheta_{t}\omega),\h z(\vartheta_{t}\omega))\right|
			\nonumber\\ & \leq C F_{N}(\|\v+\h z(\vartheta_{t}\omega)\|_{\V})\cdot	\|\v+\h z(\vartheta_{t}\omega)\|_{\V}\|\v+\h z(\vartheta_{t}\omega)\|_{\V} \|\h\|_{\V} |z(\vartheta_{t}\omega)|
			\nonumber\\ & \leq C N  \|\v+\h z(\vartheta_{t}\omega)\|_{\V} \|\h\|_{\V} |z(\vartheta_{t}\omega)|
			\nonumber\\ & \leq \frac{\nu}{16}  \|\v+\h z(\vartheta_{t}\omega)\|^2_{\V}+ CN^2 \|\h\|^2_{\V} |z(\vartheta_{t}\omega)|^2
			\nonumber\\ & \leq \frac{\nu}{8}  \|\v\|^2_{\V} + \widehat{R}_1(1+N^2) \|\h\|^2_{\V} |z(\vartheta_{t}\omega)|^2,
		\end{align}
		where we have also used \eqref{b0}, \eqref{HI+SE}, \eqref{poin}, \eqref{FN1} and Young's inequality. Using \eqref{poin}, H\"older's and Young's inequalities, there exist constants $\widehat{R}_2,\widehat{R}_3>0$ such that
		\begin{align}
			\left(\f,\v\right)+z(\vartheta_{t}\omega)\big(\sigma\h-\nu \A\h,\v\big)\leq\frac{\nu\lambda}{8}\|\v\|^2_{\H}+\widehat{R}_2\|\f\|^{2}_{\L^2(\mathcal{O})}+\widehat{R}_3 \left|z(\vartheta_{t}\omega)\right|^2.\label{ue19-N}
		\end{align}
		 Combining \eqref{ue17-N}-\eqref{ue19-N} we have
		\begin{align*}
		\frac{\d}{\d t}\|\v\|^2_{\H}+\frac{3\nu}{2}\|\v\|^2_{\V} &\leq 2\widehat{R}_2\|\f\|^{2}_{\L^2(\mathcal{O})}+2\left[\widehat{R}_1(1+N^2) \|\h\|^2_{\V}+\widehat{R}_3\right]|z(\vartheta_{t}\omega)|^2.
		\end{align*}
		Along with \eqref{poin}, the above inequality implies 
		\begin{align*}
		\frac{\d}{\d t}\|\v\|^2_{\H}+\nu\lambda\|\v\|^2_{\H}+\frac{\nu}{2}\|\v\|^2_{\V} &\leq \widehat{R}_4\left[\|\f\|^{2}_{\L^2(\mathcal{O})}+\left|z(\vartheta_{t}\omega)\right|^2\right], 
		\end{align*}	
		where $\widehat{R}_4=2\max\{\widehat{R}_1(1+N^2) \|\h\|^2_{\V} +\widehat{R}_3,\widehat{R}_2\}.$ This completes the proof of the estimate \eqref{EI1-N}.
	\end{proof}

	The next result shows the Lusin continuity of solution mappings of the system \eqref{CNSE-A} with respect to sample points.
	\begin{proposition}\label{LusinC-N}
		Let $\f\in\mathrm{L}^2_{\mathrm{loc}}(\R;\L^2(\mathcal{O}))$ and Assumption \ref{assumpO} holds. For each $N\in\N$, the mapping $\omega\mapsto\v(t,\tau,\omega,\v_{\tau})$ $($solution of \eqref{CNSE-A}$)$ is continuous from $(\Omega_{M},d_{\Omega_M})$ to $\H$, uniformly in $t\in[\tau,\tau+T]$ with $T>0.$
	\end{proposition}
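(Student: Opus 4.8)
The plan is to fix $N\in\N$, a sequence $\omega_k\to\omega_0$ in $(\Omega_M,d_{\Omega_M})$, and to compare the corresponding solutions $\v_k:=\v(\cdot,\tau,\omega_k,\v_\tau)$ and $\v_0:=\v(\cdot,\tau,\omega_0,\v_\tau)$ of \eqref{CNSE-A} starting from the \emph{same} initial datum $\v_\tau\in\H$. Writing $\w_k:=\v_k-\v_0$, I would subtract the two equations \eqref{CNSE-A}, test with $\w_k$ in $\H$, and derive a differential inequality of Gr\"onwall type for $\|\w_k(t)\|_\H^2$ on $[\tau,\tau+T]$. Since $\w_k(\tau)=0$, the whole estimate will be driven by the discrepancy between $z(\vartheta_t\omega_k)$ and $z(\vartheta_t\omega_0)$, which tends to $0$ uniformly on $[\tau,\tau+T]$ by Lemma \ref{conv_z}; hence $\sup_{t\in[\tau,\tau+T]}\|\w_k(t)\|_\H\to0$, which is exactly the claim.

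The key steps, in order: (i) Set up the equation for $\w_k$; the linear term $\nu\A\w_k$ gives $-\nu\|\w_k\|_\V^2$, and the inhomogeneity contributes $(z(\vartheta_t\omega_k)-z(\vartheta_t\omega_0))(\sigma\h-\nu\A\h,\w_k)$, bounded by $\frac{\nu\lambda}{8}\|\w_k\|_\H^2 + C|z(\vartheta_t\omega_k)-z(\vartheta_t\omega_0)|^2$ via \eqref{poin} and Young's inequality. (ii) Handle the nonlinear difference $\B_N(\v_k+\h z_k)-\B_N(\v_0+\h z_0)$ using the splitting \eqref{BN-diff} with $\u=\v_k+\h z_k$, $\boldsymbol v=\v_0+\h z_0$ (so $\u-\boldsymbol v=\w_k+\h(z_k-z_0)$), together with the Lipschitz bound \eqref{FN2} on $F_N$, the estimate \eqref{FN1}, and the interpolation inequality \eqref{HI+SE}; after pairing with $\w_k$ and using $b_N(\cdot,\boldsymbol v,\boldsymbol v)=0$ to kill the worst term, every remaining term is either absorbed into $\frac{\nu}{2}\|\w_k\|_\V^2$ by Young's inequality or bounded by $\big(C_1(t)\|\w_k\|_\H^2 + C_2(t)|z_k-z_0|^2\big)$, where $C_1,C_2$ depend on $\|\v_k\|_\V,\|\v_0\|_\V,\|\h\|_{\D(\A)}$ and $\sup_k|z(\vartheta_t\omega_k)|$. (iii) Control the time-integrability of the coefficients: $\|\v_k\|_\V^2$ and $\|\v_0\|_\V^2$ are integrable on $[\tau,\tau+T]$ by Lemma \ref{Soln-N} (indeed the energy inequality \eqref{EI1-N} gives a bound uniform in $k$ once we use $\sup_k\sup_{t}|z(\vartheta_t\omega_k)|\le C(\tau,T,\omega_0)$ from \eqref{conv_z2} and the uniform boundedness of $\int_\tau^{\tau+T}\|\f\|^2_{\L^2(\mathcal O)}\d t$). (iv) Apply Gr\"onwall's inequality: $\|\w_k(t)\|_\H^2 \le \exp\!\big(\int_\tau^{\tau+T}C_1(r)\d r\big)\int_\tau^{\tau+T}C_2(r)|z(\vartheta_r\omega_k)-z(\vartheta_r\omega_0)|^2\d r$, and since the exponential factor is bounded uniformly in $k$ and $|z(\vartheta_r\omega_k)-z(\vartheta_r\omega_0)|\to0$ uniformly on $[\tau,\tau+T]$ by Lemma \ref{conv_z}, the right-hand side tends to $0$ uniformly in $t\in[\tau,\tau+T]$.

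The main obstacle is step (ii): the term $\big[F_N(\|\v_k+\h z_k\|_\V)-F_N(\|\v_0+\h z_0\|_\V)\big]\B(\v_0+\h z_0,\v_k+\h z_k)$, when paired with $\w_k$, produces a factor $\|\w_k+\h(z_k-z_0)\|_\V$ coming from \eqref{FN2}, which is only a $\V$-norm, not an $\H$-norm, of the difference; one must exploit the boundedness \eqref{FN1} of the other $F_N$ factors and the trilinear estimate \eqref{HI+SE} carefully — pairing off the $\|\w_k\|_\V$ against the dissipation and keeping $\|\h(z_k-z_0)\|_\V$ as a (small, since $z_k-z_0\to0$) forcing term — so that no uncontrolled $\|\w_k\|_\V^2$ survives. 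A secondary technical point is that the coefficient $C_1(t)$ in the Gr\"onwall step is only $\mathrm L^1$ in time, not bounded, so one must invoke the integral form of Gr\"onwall's lemma and the $k$-uniform $\mathrm L^1$ bound on $\|\v_k\|_\V^2$; both are routine once the estimates of steps (i)–(iii) are in place.
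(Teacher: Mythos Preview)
Your plan is correct and follows essentially the same route as the paper's proof: subtract the two equations, test the difference $\w_k$ against itself, decompose the nonlinear difference via \eqref{BN-diff}, estimate each piece with \eqref{HI+SE}, \eqref{FN1}, \eqref{FN2} and Young's inequality, and close with Gr\"onwall and Lemma~\ref{conv_z}. One small sharpening the paper obtains that you anticipate needing to work around: because every nonlinear term carries a factor $F_N(\|\v_k+\h z_k\|_\V)\|\v_k+\h z_k\|_\V\le N$ (or the analogous bound via \eqref{FN2}), the Gr\"onwall coefficient in front of $\|\w_k\|_\H^2$ turns out to be simply $C N^4 + CN^2|z_k-z_0|^2\|\h\|_\V^2$, \emph{bounded} uniformly in $t$, so your step~(iii) (uniform-in-$k$ $\mathrm L^1$ bounds on $\|\v_k\|_\V^2$) is not actually needed---the cutoff does all the work.
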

	\begin{proof}
		Assume $\omega_k,\omega_0\in\Omega_M$ are such that $d_{\Omega_M}(\omega_k,\omega_0)\to0$ as $k\to\infty$. Let $\mathscr{Y}^k(\cdot):=\v^k(\cdot)-\v^0(\cdot),$ where $\v^k(\cdot)=\v(\cdot,\tau,\omega_k,\v_{\tau})$ and $\v_0(\cdot)=\v(\cdot,\tau,\omega_0,\v_{\tau})$. Then, $\mathscr{Y}^k(\cdot)$ satisfies:
		\begin{align}\label{LC1-N}
			\frac{\d\mathscr{Y}^k}{\d t}
			&=-\nu \A\mathscr{Y}^k -\left[\B_N\big(\v^k+z(\vartheta_{t}\omega_k)\h\big)-\B_N\big(\v^0+z(\vartheta_{t}\omega_0)\h\big)\right]\nonumber\\&\quad +\left\{\sigma\h-\nu\A\h\right\}\left[z(\vartheta_t\omega_k)-z(\vartheta_t\omega_0)\right]
			\nonumber\\ &=-\nu \A\mathscr{Y}^k - F_{N}(\|\v^k+z(\vartheta_{t}\omega_k)\h\|_{\V}) \cdot  \B\big(\mathscr{Y}^k+\left[z(\vartheta_t\omega_k)-z(\vartheta_t\omega_0)\right]\h, \v^k+z(\vartheta_{t}\omega_k)\h\big)
			\nonumber\\ & \quad - \big[ F_{N}(\|\v^k+z(\vartheta_{t}\omega_k)\h\|_{\V}) - F_{N}(\|\v^0+z(\vartheta_{t}\omega_0)\h\|_{\V})\big]\cdot \B\big( \v^k+z(\vartheta_{t}\omega_k)\h, \v^0+z(\vartheta_{t}\omega_0)\h \big)
			\nonumber\\ & \quad - F_{N}(\|\v^0+z(\vartheta_{t}\omega_0)\h\|_{\V}) \cdot \B\big(\v^0+z(\vartheta_{t}\omega_0)\h,\mathscr{Y}^k+\left[z(\vartheta_t\omega_k)-z(\vartheta_t\omega_0)\right]\h\big)
			\nonumber\\& \quad +\left\{\sigma\h-\nu\A\h\right\}\left[z(\vartheta_t\omega_k)-z(\vartheta_t\omega_0)\right],
		\end{align}
		in $\V^*$, where we have used \eqref{BN-diff} in the final equality.  Taking the  inner product with $\mathscr{Y}^k$ in \eqref{LC1-N} and using \eqref{b0}, we infer
		\begin{align}\label{LC2-N}
			&	\frac{1}{2}\frac{\d }{\d t}\|\mathscr{Y}^k\|^2_{\H}
			\nonumber\\ &=-\nu \|\mathscr{Y}^k\|_{\H}^2 - F_{N}(\|\v^k+z(\vartheta_{t}\omega_k)\h\|_{\V}) \cdot b\big(\mathscr{Y}^k+\left[z(\vartheta_t\omega_k)-z(\vartheta_t\omega_0)\right]\h, \v^k+z(\vartheta_{t}\omega_k)\h,\mathscr{Y}^k\big)
			\nonumber\\ & \quad - \big[ F_{N}(\|\v^k+z(\vartheta_{t}\omega_k)\h\|_{\V}) - F_{N}(\|\v^0+z(\vartheta_{t}\omega_0)\h\|_{\V})\big] \cdot b\big( \v^k+z(\vartheta_{t}\omega_k)\h, \v^0+z(\vartheta_{t}\omega_0)\h, \mathscr{Y}^k \big)
			\nonumber\\ & \quad - F_{N}(\|\v^0+z(\vartheta_{t}\omega_0)\h\|_{\V}) \cdot b\big(\v^0+z(\vartheta_{t}\omega_0)\h,\left[z(\vartheta_t\omega_k)-z(\vartheta_t\omega_0)\right]\h, \mathscr{Y}^k \big)
			\nonumber\\& \quad +\left\{\sigma\h-\nu\A\h\right\}\left[z(\vartheta_t\omega_k)-z(\vartheta_t\omega_0)\right].
		\end{align}
		Next we estimate each term on the right hand side of \eqref{LC2-N}. Using \eqref{HI+SE}, \eqref{FN1} and Young's inequality, we obtain 
		\begin{align}\label{LC3-N}
			& \big|F_{N}(\|\v^k+z(\vartheta_{t}\omega_k)\h\|_{\V}) \cdot b\big(\mathscr{Y}^k+\left[z(\vartheta_t\omega_k)-z(\vartheta_t\omega_0)\right]\h, \v^k+z(\vartheta_{t}\omega_k)\h,\mathscr{Y}^k\big)\big|
			\nonumber\\ & \leq F_{N}(\|\v^k+z(\vartheta_{t}\omega_k)\h\|_{\V}) \cdot \|\mathscr{Y}^k+\left[z(\vartheta_t\omega_k)-z(\vartheta_t\omega_0)\right]\h\|_{\V} \|\v^k+z(\vartheta_{t}\omega_k)\h\|_{\V} \|\mathscr{Y}^k\|_{\H}^{\frac{1}{2}}\|\mathscr{Y}^k\|_{\V}^{\frac{1}{2}}
			\nonumber\\ & \leq N \|\mathscr{Y}^k+\left[z(\vartheta_t\omega_k)-z(\vartheta_t\omega_0)\right]\h\|_{\V}  \|\mathscr{Y}^k\|_{\H}^{\frac{1}{2}}\|\mathscr{Y}^k\|_{\V}^{\frac{1}{2}}
			\nonumber\\ & \leq N \|\mathscr{Y}^k\|_{\H}^{\frac{1}{2}}\|\mathscr{Y}^k\|_{\V}^{\frac{3}{2}} + N \left|z(\vartheta_t\omega_k)-z(\vartheta_t\omega_0)\right|\|\h\|_{\V}  \|\mathscr{Y}^k\|_{\H}^{\frac{1}{2}}\|\mathscr{Y}^k\|_{\V}^{\frac{1}{2}}
			\nonumber\\ & \leq \frac{\nu}{8}\|\mathscr{Y}^k\|_{\V}^{2} + C N^2 \left[ N^2   +  \left|z(\vartheta_t\omega_k)-z(\vartheta_t\omega_0)\right|^2\|\h\|^2_{\V}\right]  \|\mathscr{Y}^k\|_{\H}^{2}.
		\end{align}
		Using \eqref{HI+SE}, \eqref{FN1}-\eqref{FN2} and Young's inequality, we find 
		\begin{align}\label{LC4-N}
			& \left|\big[ F_{N}(\|\v^k+z(\vartheta_{t}\omega_k)\h\|_{\V}) - F_{N}(\|\v^0+z(\vartheta_{t}\omega_0)\h\|_{\V})\big] \cdot b\big( \v^k+z(\vartheta_{t}\omega_k)\h, \v^0+z(\vartheta_{t}\omega_0)\h, \mathscr{Y}^k \big)\right|
			\nonumber\\ & \leq \frac{C}{N} F_{N}(\|\v^k+z(\vartheta_{t}\omega_k)\h\|_{\V}) F_{N}(\|\v^0+z(\vartheta_{t}\omega_0)\h\|_{\V}) \cdot \|\mathscr{Y}^k+\left[z(\vartheta_t\omega_k)-z(\vartheta_t\omega_0)\right]\h\|_{\V} 
			\nonumber\\ & \quad \times \|\v^k+z(\vartheta_{t}\omega_k)\h\|_{\V} \|\v^0+z(\vartheta_{t}\omega_0)\h\|_{\V} \|\mathscr{Y}^k\|_{\H}^{\frac{1}{2}}\|\mathscr{Y}^k\|_{\V}^{\frac{1}{2}}
			\nonumber\\ & \leq C N  \|\mathscr{Y}^k+\left[z(\vartheta_t\omega_k)-z(\vartheta_t\omega_0)\right]\h\|_{\V} 
			\|\mathscr{Y}^k\|_{\H}^{\frac{1}{2}}\|\mathscr{Y}^k\|_{\V}^{\frac{1}{2}}
			\nonumber\\ & \leq \frac{\nu}{8}\|\mathscr{Y}^k\|_{\V}^{2} + C N^2 \left[ N^2   +  \left|z(\vartheta_t\omega_k)-z(\vartheta_t\omega_0)\right|^2\|\h\|^2_{\V}\right]  \|\mathscr{Y}^k\|_{\H}^{2}.
		\end{align}
		Using \eqref{HI+SE}, \eqref{FN1} and Young's inequality, we get  
		\begin{align}\label{LC5-N}
			&\left|F_{N}(\|\v^0+z(\vartheta_{t}\omega_0)\h\|_{\V}) \cdot b\big(\v^0+z(\vartheta_{t}\omega_0)\h,\left[z(\vartheta_t\omega_k)-z(\vartheta_t\omega_0)\right]\h, \mathscr{Y}^k \big)\right|
			\nonumber\\ & \leq F_{N}(\|\v^0+z(\vartheta_{t}\omega_0)\h\|_{\V})\cdot \|\v^0+z(\vartheta_{t}\omega_0)\h\|_{\V} \left|z(\vartheta_t\omega_k)-z(\vartheta_t\omega_0)\right|\|\h\|_{\V} \|\mathscr{Y}^k\|_{\H}^{\frac{1}{2}}\|\mathscr{Y}^k\|_{\V}^{\frac{1}{2}}
			\nonumber\\ & \leq N \left|z(\vartheta_t\omega_k)-z(\vartheta_t\omega_0)\right|\|\h\|_{\V} \|\mathscr{Y}^k\|_{\H}^{\frac{1}{2}}\|\mathscr{Y}^k\|_{\V}^{\frac{1}{2}}
			\nonumber\\ & \leq \frac{\nu}{8}\|\mathscr{Y}^k\|_{\V}^{2} + C N^2 \left|z(\vartheta_t\omega_k)-z(\vartheta_t\omega_0)\right|^2\|\h\|^2_{\V}   \|\mathscr{Y}^k\|_{\H}^{2}.
		\end{align}
		Finally, H\"older's and Young's inequalities imply
		\begin{align}\label{LC6-N}
			&\left|z(\vartheta_t\omega_k)-z(\vartheta_t\omega_0)\right|	|\big(\sigma\h-\nu\A\h,\mathscr{Y}^k\big)|
			\nonumber\\ & \leq C  [\|\h\|_{\H} +\|\h\|_{\D(\A)}]\left|z(\vartheta_t\omega_k)-z(\vartheta_t\omega_0)\right|\|\mathscr{Y}^k\|_{\H}
			\nonumber\\ & \leq \frac{\nu\lambda}{8}\|\mathscr{Y}^k\|_{\H}^{2} + C \left|z(\vartheta_t\omega_k)-z(\vartheta_t\omega_0)\right|^2 [\|\h\|^2_{\H} +\|\h\|^2_{\D(\A)}]
			\nonumber\\ & \leq \frac{\nu}{8}\|\mathscr{Y}^k\|_{\V}^{2} + C \left|z(\vartheta_t\omega_k)-z(\vartheta_t\omega_0)\right|^2 [\|\h\|^2_{\H} +\|\h\|^2_{\D(\A)}].
		\end{align}
		Combining \eqref{LC2-N}-\eqref{LC6-N}, we reach at
		\begin{align}\label{LC8-N}
			\frac{\d }{\d t}\|\mathscr{Y}^k(t)\|^2_{\H}\leq P_k(t)\|\mathscr{Y}^k(t)\|^2_{\H}+Q_k(t),
		\end{align}
		for a.e. $t\in[\tau,\tau+T]$, $T>0$, where $P_k(t)=C N^2 \left[ N^2   +  \left|z(\vartheta_t\omega_k)-z(\vartheta_t\omega_0)\right|^2\|\h\|^2_{\V}\right]$ and
		\begin{align*}
			Q_k&=C \left|z(\vartheta_t\omega_k)-z(\vartheta_t\omega_0)\right|^2 [\|\h\|^2_{\H} +\|\h\|^2_{\D(\A)}].
		\end{align*}
		Now, from Lemma \ref{conv_z} and the fact that $\h\in\D(\A)$, we conclude that
		\begin{align}\label{LC13-N}
			\sup_{k\in\N}\int_{\tau}^{\tau+T}P_{k}(t)\d t\leq C(\tau,T,N,\omega_0)\ \  \text{ and }\ \ 	\lim_{k\to+\infty}\int_{\tau}^{\tau+T}Q_k(t)\d t=0.
		\end{align}
		In view of \eqref{LC13-N}, we apply Gronwall's inequality to \eqref{LC8-N} and complete the proof.
	\end{proof}
	
	Making use of Lemma \ref{Soln-N}, we can define a mapping $\Phi:\R^+\times\R\times\Omega\times\H\to\H$ by
	\begin{align}\label{Phi-N}
		\Phi(t,\tau,\omega,\u_{\tau})=\u(t+\tau,\tau,\vartheta_{-\tau}\omega,\u_{\tau})=\v(t+\tau,\tau,\vartheta_{-\tau}\omega,\v_{\tau})+\h z(\vartheta_{t}\omega).
	\end{align}
	The Lusin continuity in Proposition \ref{LusinC-N} provides the $\mathscr{F}$-measurability of $\Phi$. Consequently, Lemma \ref{Soln-N} and Proposition \ref{LusinC-N} imply that the mapping $\Phi$ defined by \eqref{Phi-N} is an NRDS on $\H$.

	\subsection{Backward convergence of NRDS}
	Consider the following autonomous 3D SGMNSE driven by additive white noise:
	\begin{equation}\label{A-SNSE}
		\left\{
		\begin{aligned}
			\frac{\d\widetilde{\u}}{\d t}+\nu \A\widetilde{\u}+\B_{N}(\widetilde{\u})&=\mathcal{P}\f_{\infty} +\h\frac{\d \W}{\d t}, \\
			\widetilde{\u}(0)&=\widetilde{\u}_{0}.
		\end{aligned}
		\right.
	\end{equation}
	We show that the solution of the system \eqref{CNSE-A} converges to the solution of the corresponding autonomous system \eqref{A-SNSE} as $\tau\to-\infty$. Let $\widetilde{\v}(t,\omega)=\widetilde{\u}(t,\omega)-\h(x)z(\vartheta_{t}\omega)$. Then, the pathwise deterministic system corresponding to the stochastic system \eqref{A-SNSE} is given by:
	\begin{equation}\label{A-CNSE}
		\left\{
		\begin{aligned}
			\frac{\d\widetilde{\v}}{\d t} +\nu \A\widetilde{\v} + \B_{N}(\widetilde{\v}+\h z(\vartheta_{t}\omega)) &= \mathcal{P}{\boldsymbol{f}}_{\infty} + \sigma\h z(\vartheta_{t}\omega) -\nu z(\vartheta_{t}\omega)\A\h, \\
			\widetilde{\v}(0)&=\widetilde{\v}_{0} =\widetilde{\u}_{0}-\h z(\omega), 
		\end{aligned}
		\right.
	\end{equation}
	in $\V^*$. 
	\begin{proposition}\label{Back_conver-N}
		Suppose that Assumptions \ref{assumpO} and \ref{Hypo_f-N} are satisfied. Then the solution $\v$ of the system \eqref{CNSE-A} backward converges to the solution $\widetilde{\v}$ of the system \eqref{A-CNSE}, that is,
		\begin{align*}
			\lim_{\tau\to -\infty}\|\v(T+\tau,\tau,\vartheta_{-\tau}\omega,\v_{\tau})-\widetilde{\v}(t,\omega,\widetilde{\v}_0)\|_{\H}=0, \ \ \text{ for all } T>0 \text{ and } \omega\in\Omega,
		\end{align*}
		whenever $\|\v_{\tau}-\widetilde{\v}_0\|_{\H}\to0$ as $\tau\to-\infty.$
	\end{proposition}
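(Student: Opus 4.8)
The plan is to write down the equation satisfied by the difference of the two solutions, test it against that difference in $\H$, and close a Gr\"onwall estimate whose exponential rate depends only on $N$ and $\nu$ (not on the solutions), so that the resulting bound is uniform in $\tau$. First I would fix $T>0$ and set $\boldsymbol{u}(t):=\v(t+\tau,\tau,\vartheta_{-\tau}\omega,\v_{\tau})$ for $t\in[0,T]$. Using the group property $\vartheta_{t+\tau}\vartheta_{-\tau}=\vartheta_t$, one checks that $\boldsymbol{u}$ solves in $\V^*$ exactly the autonomous pathwise system \eqref{A-CNSE} satisfied by $\widetilde{\v}$, except that the force $\mathcal{P}\f_{\infty}$ is replaced by $\mathcal{P}\f(\cdot+\tau)$ and the initial datum $\widetilde{\v}_0$ by $\v_{\tau}$; crucially the noise-driven terms $\sigma\h z(\vartheta_t\omega)$, $-\nu z(\vartheta_t\omega)\A\h$, and the argument $\h z(\vartheta_t\omega)$ inside $\B_N$, are identical in the two equations. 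Hence $\boldsymbol{w}:=\boldsymbol{u}-\widetilde{\v}$ would satisfy
\begin{align*}
\frac{\d\boldsymbol{w}}{\d t}+\nu\A\boldsymbol{w}+\big[\B_N(\boldsymbol{u}+\h z(\vartheta_t\omega))-\B_N(\widetilde{\v}+\h z(\vartheta_t\omega))\big]=\mathcal{P}\big(\f(\cdot+\tau)-\f_{\infty}\big),\qquad \boldsymbol{w}(0)=\v_{\tau}-\widetilde{\v}_0 .
\end{align*}

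Next I would take the inner product with $\boldsymbol{w}$ in $\H$ and expand the nonlinear difference with the identity \eqref{BN-diff}, taking $\u$ there to be $\boldsymbol{u}+\h z(\vartheta_t\omega)$ and $\boldsymbol{v}$ to be $\widetilde{\v}+\h z(\vartheta_t\omega)$, so that the difference is exactly $\boldsymbol{w}$. The summand $F_N(\|\widetilde{\v}+\h z(\vartheta_t\omega)\|_{\V})\,\B(\widetilde{\v}+\h z(\vartheta_t\omega),\boldsymbol{w})$ tested against $\boldsymbol{w}$ vanishes by the antisymmetry \eqref{b0}. For the summand $F_N(\|\boldsymbol{u}+\h z(\vartheta_t\omega)\|_{\V})\,\B(\boldsymbol{w},\boldsymbol{u}+\h z(\vartheta_t\omega))$, I would use \eqref{HI+SE} together with $\|\boldsymbol{u}+\h z(\vartheta_t\omega)\|_{\V}F_N(\|\boldsymbol{u}+\h z(\vartheta_t\omega)\|_{\V})\le N$ from \eqref{FN1} to bound it by $CN\|\boldsymbol{w}\|_{\H}^{1/2}\|\boldsymbol{w}\|_{\V}^{3/2}\le\frac{\nu}{4}\|\boldsymbol{w}\|_{\V}^2+CN^4\|\boldsymbol{w}\|_{\H}^2$. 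For the remaining summand, which carries the factor $F_N(\|\boldsymbol{u}+\h z(\vartheta_t\omega)\|_{\V})-F_N(\|\widetilde{\v}+\h z(\vartheta_t\omega)\|_{\V})$, the Lipschitz-type bound \eqref{FN2} supplies $\frac1N F_N(\|\boldsymbol{u}+\h z(\vartheta_t\omega)\|_{\V})F_N(\|\widetilde{\v}+\h z(\vartheta_t\omega)\|_{\V})\|\boldsymbol{w}\|_{\V}$, which combined with \eqref{HI+SE} and two more uses of \eqref{FN1} again gives $CN\|\boldsymbol{w}\|_{\H}^{1/2}\|\boldsymbol{w}\|_{\V}^{3/2}\le\frac{\nu}{4}\|\boldsymbol{w}\|_{\V}^2+CN^4\|\boldsymbol{w}\|_{\H}^2$. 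The force term is handled by Cauchy--Schwarz and Young's inequality: $|(\mathcal{P}(\f(\cdot+\tau)-\f_{\infty}),\boldsymbol{w})|\le\frac12\|\boldsymbol{w}\|_{\H}^2+\frac12\|\f(\cdot+\tau)-\f_{\infty}\|_{\L^2(\mathcal{O})}^2$. Collecting these estimates, absorbing $\frac{\nu}{2}\|\boldsymbol{w}\|_{\V}^2$ into the dissipative term and discarding the rest, one is left with $\frac{\d}{\d t}\|\boldsymbol{w}(t)\|_{\H}^2\le C_N\|\boldsymbol{w}(t)\|_{\H}^2+\|\f(t+\tau)-\f_{\infty}\|_{\L^2(\mathcal{O})}^2$ on $[0,T]$, where $C_N$ depends only on $\nu$ and $N$.

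Finally I would apply Gronwall's inequality over $[0,T]$ to obtain
\begin{align*}
\|\boldsymbol{w}(T)\|_{\H}^2\le e^{C_N T}\Big(\|\v_{\tau}-\widetilde{\v}_0\|_{\H}^2+\int_{0}^{T}\|\f(t+\tau)-\f_{\infty}\|_{\L^2(\mathcal{O})}^2\,\d t\Big),
\end{align*}
and let $\tau\to-\infty$: the first term vanishes by hypothesis, the integral vanishes by \eqref{BC8-A}, and the prefactor $e^{C_N T}$ is independent of $\tau$, so the entire right-hand side tends to $0$, which is precisely $\|\v(T+\tau,\tau,\vartheta_{-\tau}\omega,\v_{\tau})-\widetilde{\v}(T,\omega,\widetilde{\v}_0)\|_{\H}\to0$.

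I expect the main obstacle to be the careful bookkeeping in the second step: every nonlinear piece must be estimated so that the resulting constant does not see the functions $\boldsymbol{u}$ and $\widetilde{\v}$ (which, over the infinite range of $\tau$, are controlled only in $\H$ and not uniformly in $\V$), and this is exactly what the global-modification bounds \eqref{FN1}--\eqref{FN2} deliver through the factors $F_N(\|\cdot\|_{\V})\|\cdot\|_{\V}\le N$. A minor but necessary check is that the time shift $t\mapsto t+\tau$, via $\vartheta_{t+\tau}\vartheta_{-\tau}=\vartheta_t$, makes all the noise-driven terms cancel identically, leaving only the gap $\f(\cdot+\tau)-\f_{\infty}$ and the gap in the initial data to drive $\boldsymbol{w}$.
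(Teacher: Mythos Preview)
Your proposal is correct and follows essentially the same approach as the paper: the authors likewise set $\mathscr{Y}^{\tau}(t)=\v(t+\tau,\tau,\vartheta_{-\tau}\omega,\v_{\tau})-\widetilde{\v}(t,\omega,\widetilde{\v}_0)$, expand the nonlinear difference via \eqref{BN-diff}, use \eqref{b0} to kill one term and \eqref{HI+SE}, \eqref{FN1}--\eqref{FN2} with Young's inequality to bound the other two by $\frac{\nu}{6}\|\mathscr{Y}^{\tau}\|_{\V}^2+CN^4\|\mathscr{Y}^{\tau}\|_{\H}^2$-type quantities, then close by Gronwall and \eqref{BC8-A}. The only cosmetic difference is that the paper absorbs the forcing term into $\frac{\nu}{6}\|\mathscr{Y}^{\tau}\|_{\V}^2$ via Poincar\'e rather than into $\|\boldsymbol{w}\|_{\H}^2$ as you do, which is immaterial.
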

	
	\begin{proof}
		Let $\mathscr{Y}^{\tau}(t):=\v(t+\tau,\tau,\vartheta_{-\tau}\omega,\v_{\tau})-\widetilde{\v}(t,\omega,\widetilde{\v}_0)$ for $t\geq0$. From \eqref{CNSE-A} and \eqref{A-CNSE}, we have
		\begin{align}\label{BC1-A}
			\frac{\d\mathscr{Y}^{\tau}}{\d t}
			&=-\nu \A\mathscr{Y}^{\tau}-\left[\B_{N}\big(\v+\h z(\vartheta_{t}\omega)\big)-\B_{N}\big(\widetilde{\v}+\h z(\vartheta_{t}\omega)\big)\right]+\left[\mathcal{P}\f(t+\tau)-\mathcal{P}\f_{\infty}\right]
			\nonumber\\ &=-\nu \A\mathscr{Y}^{\tau}-F_{N}(\|\v+\h z(\vartheta_{t}\omega)\|_{\V}) \cdot  \B (\mathscr{Y}^{\tau},\v+\h z(\vartheta_{t}\omega) )
			\nonumber\\ & \quad - \big[ F_{N}(\|\v+\h z(\vartheta_{t}\omega)\|_{\V}) - F_{N}(\|\widetilde{\v}+\h z(\vartheta_{t}\omega)\|_{\V})\big]\cdot \B ( \widetilde{\v}+\h z(\vartheta_{t}\omega) , \v+\h z(\vartheta_{t}\omega) )
			\nonumber\\ & \quad
			- F_{N}(\|\widetilde{\v}+\h z(\vartheta_{t}\omega)\|_{\V}) \cdot \B(\widetilde{\v}+\h z(\vartheta_{t}\omega),\mathscr{Y}^{\tau})+\left[\mathcal{P}\f(t+\tau)-\mathcal{P}\f_{\infty}\right],
		\end{align}
		in $\V^{\ast}$ (in the weak sense), where we have also used \eqref{BN-diff} in the final equality. From \eqref{BC1-A}, we infer
		\begin{align}\label{BC2-A}
			& \frac{1}{2}\frac{\d }{\d t}\|\mathscr{Y}^{\tau}\|^2_{\H}
			\nonumber\\ &=-\nu\|\mathscr{Y}^{\tau}\|^2_{\V}    
			-F_{N}(\|\v+\h z(\vartheta_{t}\omega)\|_{\V}) \cdot  b (\mathscr{Y}^{\tau},\v+\h z(\vartheta_{t}\omega),\mathscr{Y}^{\tau} )
			\nonumber\\ & \quad - \big[ F_{N}(\|\v+\h z(\vartheta_{t}\omega)\|_{\V}) - F_{N}(\|\widetilde{\v}+\h z(\vartheta_{t}\omega)\|_{\V})\big]\cdot b ( \widetilde{\v}+\h z(\vartheta_{t}\omega) , \v+\h z(\vartheta_{t}\omega),\mathscr{Y}^{\tau} )
			\nonumber\\ & \quad
			+(\f(t+\tau)-\f_{\infty},\mathscr{Y}^{\tau}).
		\end{align}
		Next, we estimate all the terms on the right hand side of \eqref{BC2-A} one by one. Using \eqref{HI+SE}, \eqref{FN1} and Young's inequality, we find
		\begin{align}
			&\left|F_{N}(\|\v+\h z(\vartheta_{t}\omega)\|_{\V}) \cdot  b (\mathscr{Y}^{\tau},\v+\h z(\vartheta_{t}\omega),\mathscr{Y}^{\tau} )\right|
			\nonumber\\ & \leq F_{N}(\|\v+\h z(\vartheta_{t}\omega)\|_{\V})\cdot \|\mathscr{Y}^{\tau}\|_{\V} \|\v+\h z(\vartheta_{t}\omega)\|_{\V} \|\mathscr{Y}^{\tau}\|^{\frac12}_{\H} \|\mathscr{Y}^{\tau}\|^{\frac12}_{\V}
			\nonumber\\ & \leq N \|\mathscr{Y}^{\tau}\|^{\frac12}_{\H} \|\mathscr{Y}^{\tau}\|^{\frac32}_{\V}
			\leq \frac{\nu}{6}\|\mathscr{Y}^{\tau}\|^2_{\V} + C N^4 \|\mathscr{Y}^{\tau}\|^2_{\H}.
		\end{align}
		Using \eqref{HI+SE}, \eqref{FN1}-\eqref{FN2} and Young's inequality, we obtain
		\begin{align}
			&\left|\big[ F_{N}(\|\v+\h z(\vartheta_{t}\omega)\|_{\V}) - F_{N}(\|\widetilde{\v}+\h z(\vartheta_{t}\omega)\|_{\V})\big]\cdot b ( \widetilde{\v}+\h z(\vartheta_{t}\omega) , \v+\h z(\vartheta_{t}\omega),\mathscr{Y}^{\tau} )\right|
			\nonumber\\ & \leq \frac{C}{N} F_{N}(\|\v+\h z(\vartheta_{t}\omega)\|_{\V}) F_{N}(\| \widetilde{\v}+\h z(\vartheta_{t}\omega)\|_{\V}) \cdot \|\v+\h z(\vartheta_{t}\omega)\|_{\V}\| \widetilde{\v}+\h z(\vartheta_{t}\omega)\|_{\V} \|\mathscr{Y}^{\tau}\|^{\frac12}_{\H} \|\mathscr{Y}^{\tau}\|^{\frac12}_{\V}
			\nonumber\\ & \leq C N \|\mathscr{Y}^{\tau}\|^{\frac12}_{\H} \|\mathscr{Y}^{\tau}\|^{\frac12}_{\V} \leq \frac{\nu}{6}\|\mathscr{Y}^{\tau}\|^2_{\V} + C N^2 \|\mathscr{Y}^{\tau}\|^2_{\H}.
		\end{align}
		Applying  H\"older's and Young's inequalities, and \eqref{poin}, we deduce
		\begin{align}\label{BC5-A}
			\left|(\f(t+\tau)-\f_{\infty},\mathscr{Y}^{\tau})\right|&\leq C\|\f(t+\tau)-\f_{\infty}\|^2_{\L^2(\mathcal{O})}+\frac{\nu\lambda}{6}\|\mathscr{Y}^{\tau}\|^2_{\H} \nonumber\\&\leq C\|\f(t+\tau)-\f_{\infty}\|^2_{\L^2(\mathcal{O})}+\frac{\nu}{6}\|\mathscr{Y}^{\tau}\|^2_{\V}.
		\end{align}
		Combining \eqref{BC2-A}-\eqref{BC5-A}, we arrive at
		\begin{align}\label{BC6-A}
			\frac{\d }{\d t}\|\mathscr{Y}^{\tau}\|^2_{\H}+\nu\|\mathscr{Y}^{\tau}\|^2_{\V}\leq CN^2(1+N^2) \|\mathscr{Y}^{\tau}\|^2_{\H}+C\|\f(t+\tau)-\f_{\infty}\|^2_{\L^2(\mathcal{O})}.
		\end{align}
		Applying Gronwall's inequality in \eqref{BC6-A} over $(0,T)$, we obtain
		\begin{align*}
			\|\mathscr{Y}^{\tau}(T)\|^2_{\H}\leq \left[\|\mathscr{Y}^{\tau}(0)\|^2_{\H}+ \int_{0}^{T}\|\f(t+\tau)-\f_{\infty}\|^2_{\L^2(\mathcal{O})} \d t\right]e^{CN^2(1+N^2) T}.
		\end{align*}
		Using \eqref{BC8-A} and the fact that $\|\mathscr{Y}^{\tau}(0)\|^2_{\H}=\|\v_{\tau}-\widetilde{\v}_0\|_{\H}\to0$ as $\tau\to-\infty$, we conclude the proof.
	\end{proof}

	\subsection{Increasing random absorbing sets}
	This subsection provides the existence of an increasing ${\mathfrak{D}}$-random absorbing set for the 3D non-autonomous SGMNSE.
	
	\begin{lemma}\label{Absorbing-N}
		Suppose that $\f\in\mathrm{L}^2_{\mathrm{loc}}(\R;\L^2(\mathcal{O}))$ and Assumption \ref{assumpO} is satisfied. Then, for all $(\tau,\omega)\in\R\times\Omega,$ $s\leq\tau$, $\xi\geq s-t,$ $ t\geq0$ and $\v_{0}\in\H$,
		\begin{align}\label{AB2-N}
			&\|\v(\xi,s-t,\vartheta_{-s}\omega,\v_{0})\|^2_{\H}+\frac{\nu}{2}\int_{s-t}^{\xi}e^{\nu\lambda(\zeta-\xi)}\|\v(\zeta,s-t,\vartheta_{-s}\omega,\v_{0})\|^2_{\V}\d\zeta\nonumber\\&\leq e^{-\nu\lambda(\xi-s+t)}\|\v_{0}\|^2_{\H} +\widehat{R}_4\int_{-t}^{\xi-s}e^{\nu\lambda(\zeta+s-\xi)}\bigg\{\|\f(\zeta+s)\|^2_{\L^2(\mathcal{O})}+\left|z(\vartheta_{\zeta}\omega)\right|^2\bigg\}\d\zeta,
		\end{align}
		where $\widehat{R}_4$ is the same as in \eqref{EI1-N}. For each $(\tau,\omega,D)\in\R\times\Omega\times{\mathfrak{D}},$ there exists a time $\mathfrak{T}:=\mathfrak{T}(\tau,\omega,D)>0$ such that
		\begin{align}\label{AB1-N}
			&\sup_{s\leq\tau}\sup_{t\geq \mathfrak{T}}\sup_{\v_{0}\in D(s-t,\vartheta_{-t}\omega)}\bigg[\|\v(s,s-t,\vartheta_{-s}\omega,\v_{0})\|^2_{\H} +\frac{\nu}{2}\int_{s-t}^{s}e^{\nu\lambda(\zeta-s)}\|\v(\zeta,s-t,\vartheta_{-s}\omega,\v_{0})\|^2_{\V}\d\zeta\bigg]\nonumber\\&\leq 1+\widehat{R}_4 \sup_{s\leq \tau}R(s,\omega),
		\end{align}
		where $R(s,\omega)$ is given by
		\begin{align}\label{ABr-N}
			R(s,\omega):=\int_{-\infty}^{0}e^{\nu\lambda\zeta}\bigg\{\|\f(\zeta+s)\|^2_{\L^2(\mathcal{O})}+\left|z(\vartheta_{\zeta}\omega)\right|^2\bigg\}\d\zeta.
		\end{align}
	\end{lemma}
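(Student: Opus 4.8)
The estimate \eqref{AB2-N} is obtained by integrating the differential inequality \eqref{EI1-N}. Specifically, I would start from
\begin{align*}
	\frac{\d}{\d t}\|\v(t)\|^2_{\H}+ \nu\lambda\|\v(t)\|^2_{\H}+\frac{\nu}{2}\|\v(t)\|^2_{\V} \leq \widehat{R}_4\left[\|\f(t)\|^{2}_{\L^2(\mathcal{O})}+\left|z(\vartheta_{t}\omega)\right|^2\right],
\end{align*}
multiply by the integrating factor $e^{\nu\lambda t}$ to rewrite the left-hand side (apart from the $\V$-term) as $\frac{\d}{\d t}\big(e^{\nu\lambda t}\|\v(t)\|^2_{\H}\big)$, and integrate over $[s-t,\xi]$ with the solution $\v(\cdot,s-t,\vartheta_{-s}\omega,\v_0)$. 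Dividing by $e^{\nu\lambda\xi}$ gives the $\H$-bound with prefactor $e^{-\nu\lambda(\xi-(s-t))}$ on $\|\v_0\|^2_\H$, and the integral of $\tfrac{\nu}{2}e^{\nu\lambda(\zeta-\xi)}\|\v(\zeta)\|^2_{\V}$ survives on the left. A change of variables $\zeta\mapsto\zeta+s$ in the forcing integral puts it in the stated form, using that the Ornstein--Uhlenbeck term appearing in \eqref{EI1-N} is $z(\vartheta_{\zeta}\omega)$ after the shift by $\vartheta_{-s}$ in the driving noise (this is where one uses $z(\vartheta_{\zeta-s}(\vartheta_{s}\cdot))$-type identities implicitly). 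This part is routine.

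For \eqref{AB1-N}, I would specialize \eqref{AB2-N} to $\xi=s$, so the exponent $e^{-\nu\lambda(\xi-s+t)}=e^{-\nu\lambda t}$, and bound $\|\v_0\|^2_\H\le \|D(s-t,\vartheta_{-t}\omega)\|^2_\H$ for $\v_0\in D(s-t,\vartheta_{-t}\omega)$. The key point is that $D\in\mathfrak{D}$ satisfies the \emph{backward-uniform} temperedness \eqref{BackTem}: $\lim_{t\to+\infty}e^{-\nu\lambda t}\sup_{s\le\tau}\|D(s-t,\vartheta_{-t}\omega)\|^2_\H=0$. Hence there is $\mathfrak T=\mathfrak T(\tau,\omega,D)$ such that $e^{-\nu\lambda t}\sup_{s\le\tau}\|D(s-t,\vartheta_{-t}\omega)\|^2_\H\le 1$ for all $t\ge\mathfrak T$, which kills the initial-data contribution uniformly in $s\le\tau$. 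For the forcing integral in \eqref{AB2-N} with $\xi=s$, extending the lower limit from $-t$ to $-\infty$ only increases the (nonnegative) integrand-integral, giving exactly $\widehat R_4\,R(s,\omega)$ with $R$ as in \eqref{ABr-N}; taking $\sup_{s\le\tau}$ yields the right-hand side of \eqref{AB1-N}. Finally, I should remark that $\sup_{s\le\tau}R(s,\omega)<\infty$: the $z$-part is finite and independent of $s$ by \eqref{Z5}--\eqref{Z3} (the integral $\int_{-\infty}^0 e^{\nu\lambda\zeta}|z(\vartheta_\zeta\omega)|^2\d\zeta$ converges since $|z(\vartheta_\zeta\omega)|$ has subexponential growth), and the $\f$-part is controlled by the uniformness condition \eqref{G3} with $\kappa=\nu\lambda$ after the substitution $r=\zeta+s$.

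\textbf{Main obstacle.} The only genuinely delicate point is handling the time-shift in the Ornstein--Uhlenbeck term carefully: the solution in \eqref{AB1-N} is driven by $\vartheta_{-s}\omega$, so when integrating \eqref{EI1-N} along this trajectory the noise term reads $z(\vartheta_{\zeta-(s-t)}\vartheta_{-s}\cdot)$-style expressions, and one must verify these collapse to $z(\vartheta_{\zeta}\omega)$ after the change of variables so that the right-hand side of \eqref{AB2-N} and the definition of $R(s,\omega)$ are consistent; this uses the cocycle property $\vartheta_{a}\vartheta_{b}=\vartheta_{a+b}$ and stationarity of $z$. Everything else — the integrating-factor computation, the choice of $\mathfrak T$, and the finiteness of $\sup_{s\le\tau}R(s,\omega)$ — is a direct consequence of \eqref{EI1-N}, \eqref{BackTem}, \eqref{G3}, and the growth bounds \eqref{Z5}--\eqref{Z3}.
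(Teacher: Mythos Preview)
Your proposal is correct and follows essentially the same route as the paper: rewrite the energy inequality \eqref{EI1-N} along the trajectory $\v(\zeta,s-t,\vartheta_{-s}\omega,\v_0)$ (so that the noise term becomes $|z(\vartheta_{\zeta-s}\omega)|^2$), apply the variation of constants formula on $(s-t,\xi)$ and change variables $\zeta\mapsto\zeta+s$ to get \eqref{AB2-N}, then set $\xi=s$, invoke the backward-uniform temperedness \eqref{BackTem} to choose $\mathfrak{T}$ with $e^{-\nu\lambda t}\sup_{s\le\tau}\|D(s-t,\vartheta_{-t}\omega)\|^2_{\H}\le 1$, and extend the forcing integral to $(-\infty,0]$. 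The only organizational difference is that the paper postpones the verification that $\sup_{s\le\tau}R(s,\omega)<\infty$ to the next proposition rather than including it here.
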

	\begin{proof}
		Let us rewrite the energy inequality \eqref{EI1-N} for $\v(\zeta)=\v(\zeta,s-t,\vartheta_{-s}\omega,\v_{0})$ as
		\begin{align}\label{AB-EI}
			&\frac{\d}{\d \zeta}\|\v(\zeta)\|^2_{\H}+ \nu\lambda \|\v(\zeta)\|^2_{\H}+\frac{\nu}{2}\|\v(\zeta)\|^2_{\V} \leq \widehat{R}_4\left[\|\f(\zeta)\|^{2}_{\L^2(\mathcal{O})}+\left|z(\vartheta_{\zeta-s}\omega)\right|^2\right].
		\end{align}
		In view of the variation of constants formula with respect to $\zeta\in(s-t,\xi)$, we obtain \eqref{AB2-N} immediately. Putting $\xi=s$ in \eqref{AB2-N}, we obtain
		\begin{align}\label{AB4-N}
			&\|\v(s,s-t,\vartheta_{-s}\omega,\v_{0})\|^2_{\H}+\frac{\nu}{2}\int_{s-t}^{s}e^{\nu\lambda(\zeta-s)}\|\v(\zeta,s-t,\vartheta_{-s}\omega,\v_{0})\|^2_{\V}\d\zeta\nonumber\\&\leq e^{-\nu\lambda t}\|\v_{0}\|^2_{\H}  +\widehat{R}_4\int_{-t}^{0}e^{\nu\lambda\zeta}\bigg\{\|\f(\zeta+s)\|^2_{\L^2(\mathcal{O})}+\left|z(\vartheta_{\zeta}\omega)\right|^2\bigg\}\d\zeta,
		\end{align}
		for all $s\leq\tau$. 
		Since $\v_0\in D(s-t,\vartheta_{-t}\omega)$ and $D$ is backward tempered, it implies from the definition of backward temperedness \eqref{BackTem} that there exists a time ${\mathfrak{T}}:={\mathfrak{T}}(\tau,\omega,D)$ such that for all $t\geq {\mathfrak{T}}>0$,
		\begin{align}\label{v_0-1}
			e^{-\nu\lambda t}\sup_{s\leq \tau}\|D(s-t,\vartheta_{-t}\omega)\|^2_{\H}\leq1.
		\end{align}
		Taking supremum over $s\in(-\infty,\tau]$ in \eqref{AB4-N}, one obtains \eqref{AB1-N}.
	\end{proof}

	\begin{proposition}\label{IRAS-N}
		Suppose that $\f\in\mathrm{L}^2_{\mathrm{loc}}(\R;\L^2(\mathcal{O}))$ and Assumption \ref{assumpO} is satisfied. Then, we have the following results: 
		\vskip 2mm
		\noindent
		\emph{\textbf{(i)}} There is an increasing $\mathfrak{D}$-pullback absorbing set $\mathcal{R}$ given by,  for all $ \tau\in\R$
		\begin{align}\label{IRAS1-N}
			\mathcal{R}(\tau,\omega):=\left\{\u\in\H:\|\u\|^2_{\H}\leq 2+2\widehat{R}_4\sup_{s\leq \tau} R(s,\omega)+2\|\h\|^2_{\H}\left|z(\omega)\right|^2\right\}, 
		\end{align}
		where $\widehat{R}_4$ and $R(\cdot,\cdot)$ are the same as in \eqref{EI1-N} and \eqref{ABr-N}, respectively.
		Moreover, $\mathcal{R}$ is backward-uniformly  tempered with an arbitrary rate, that is, $\mathcal{R}\in{\mathfrak{D}}$.
		\vskip 2mm
		\noindent
		\emph{\textbf{(ii)}} There is a $\mathfrak{B}$-pullback \textbf{random} absorbing set $\widetilde{\mathcal{R}}$ given by, for all $ \tau\in\R$
		\begin{align}\label{IRAS11-N}
			\widetilde{\mathcal{R}}(\tau,\omega):=\left\{\u\in\H:\|\u\|^2_{\H}\leq 2+2\widehat{R}_4 R(\tau,\omega)+2\|\h\|^2_{\H}\left|z(\omega)\right|^2\right\}\in{\mathfrak{B}}. 
		\end{align}
	\end{proposition}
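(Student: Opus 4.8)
The proof consists of two parts, and both are essentially a matter of transferring the uniform energy estimate of Lemma \ref{Absorbing-N} from the transformed variable $\v$ back to $\u = \v + \h z(\vartheta_t\omega)$, keeping careful track of which universe ($\mathfrak{D}$ or $\mathfrak{B}$) we are working in.

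\emph{Part (i): the deterministic-radius absorbing set $\mathcal{R}$.} First I would recall the relation $\u(s,s-t,\vartheta_{-s}\omega,\u_0) = \v(s,s-t,\vartheta_{-s}\omega,\v_0) + \h z(\omega)$, where $\v_0 = \u_0 - \h z(\vartheta_{-t}\omega)$, so that by the elementary inequality $\|a+b\|_\H^2 \le 2\|a\|_\H^2 + 2\|b\|_\H^2$ and the absorption estimate \eqref{AB1-N} of Lemma \ref{Absorbing-N}, for $t \ge \mathfrak{T}(\tau,\omega,D)$ and all $s \le \tau$, $\u_0 \in D(s-t,\vartheta_{-t}\omega)$,
\begin{align*}
\|\u(s,s-t,\vartheta_{-s}\omega,\u_0)\|_\H^2 \le 2\Big(1 + \widehat{R}_4 \sup_{s\le\tau} R(s,\omega)\Big) + 2\|\h\|_\H^2 |z(\omega)|^2,
\end{align*}
which is exactly the defining bound for $\mathcal{R}(\tau,\omega)$; hence $\mathcal{R}$ is $\mathfrak{D}$-pullback absorbing. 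The fact that $\mathcal{R}$ is increasing in $\tau$ is immediate since $\tau \mapsto \sup_{s\le\tau} R(s,\omega)$ is nondecreasing. The one genuine thing to check is that $\mathcal{R} \in \mathfrak{D}$, i.e. the backward-uniform temperedness \eqref{BackTem}: I must show $e^{-ct}\sup_{s\le\tau}\|\mathcal{R}(s-t,\vartheta_{-t}\omega)\|_\H^2 \to 0$ as $t\to+\infty$ for every $c>0$. This reduces to controlling the two $t$-dependent pieces: $e^{-ct}\sup_{s\le\tau}\sup_{r\le s-t} R(r,\vartheta_{-t}\omega)$ and $e^{-ct}\sup_{s\le\tau}|z(\vartheta_{-t}\omega)|^2$. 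For the latter one uses \eqref{Z5}. For the former I would unwind the definition \eqref{ABr-N}, perform the change of variable that replaces $z(\vartheta_\zeta \vartheta_{-t}\omega)$ by $z(\vartheta_{\zeta-t}\omega)$ and absorbs the shift into the exponential weight, and then invoke the Uniformness condition \eqref{G3} on $\f$ together with the subexponential growth \eqref{Z3}/\eqref{Z5} of $z$; the weight $e^{\nu\lambda\zeta}$ produced by the variation-of-constants formula beats any $e^{-ct}$ blow-up after the shift. This is the step I expect to be the main obstacle: making the supremum over the uncountable set $s\in(-\infty,\tau]$ interact correctly with the extra pullback shift $\vartheta_{-t}$ inside $R$, and verifying the decay is uniform in $s$.

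\emph{Part (ii): the random-radius absorbing set $\widetilde{\mathcal{R}}$.} Here the argument is the same but simpler: I drop the supremum over $s\le\tau$ and work at the single time $\tau$. Taking $s=\tau$ in \eqref{AB4-N} (equivalently specializing \eqref{AB1-N} without the outer $\sup_{s\le\tau}$) and adding back $\h z(\omega)$ as above yields that any trajectory starting from a $\mathfrak{B}$-set is eventually inside $\widetilde{\mathcal{R}}(\tau,\omega)$, so $\widetilde{\mathcal{R}}$ is $\mathfrak{B}$-pullback absorbing. Finally, to see $\widetilde{\mathcal{R}} \in \mathfrak{B}$ I must check $e^{-ct}\|\widetilde{\mathcal{R}}(\tau-t,\vartheta_{-t}\omega)\|_\H^2 \to 0$; this again splits into the $z$-term, handled by \eqref{Z5}, and the term $e^{-ct} R(\tau-t,\vartheta_{-t}\omega)$, handled by the same change-of-variables-and-\eqref{G3} computation as in Part (i) but now with no supremum to manage, so it is routine. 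I would remark that, unlike $\mathcal{R}$, the set $\widetilde{\mathcal{R}}$ is genuinely random (its radius depends measurably on $\omega$ through $R(\tau,\omega)$ and $|z(\omega)|^2$, both of which are measurable by the properties of the Ornstein–Uhlenbeck process recalled in Subsection \ref{2.5}), which is why it is the natural object for the usual tempered theory while $\mathcal{R}$ is tailored to the backward-uniform theory.
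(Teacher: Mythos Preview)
Your proposal is correct and follows essentially the same route as the paper: translate the estimate \eqref{AB1-N} of Lemma \ref{Absorbing-N} from $\v$ to $\u=\v+\h z(\omega)$ via $\|\u\|_\H^2\le 2\|\v\|_\H^2+2\|\h\|_\H^2|z(\omega)|^2$, note the monotonicity in $\tau$, and then verify $\mathcal{R}\in\mathfrak{D}$ by the change of variables $\zeta\mapsto\zeta-t$ in $R(\cdot,\vartheta_{-t}\omega)$ together with \eqref{G3}, \eqref{Z5}, \eqref{Z3}. The only (cosmetic) difference is that for part (ii) the paper short-circuits the temperedness check by observing $\widetilde{\mathcal{R}}(\tau,\omega)\subseteq\mathcal{R}(\tau,\omega)\in\mathfrak{D}\subseteq\mathfrak{B}$, whereas you propose to redo the computation; either way is fine, and your remark on $\mathscr{F}$-measurability of $\omega\mapsto R(\tau,\omega)$ matches what the paper needs to call $\widetilde{\mathcal{R}}$ a \emph{random} absorbing set.
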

	\begin{proof}
		\textbf{(i)} Using \eqref{G3} and \eqref{Z3}, we derive
		\begin{align}\label{IRAS2-N}
			\sup_{s\leq \tau}R(s,\omega)&= \sup_{s\leq \tau}\int_{-\infty}^{0}
			e^{\nu\lambda\zeta}
			\bigg\{\|\f(\zeta+s)\|^2_{\L^2(\mathcal{O})}
			+\left|z(\vartheta_{\zeta}\omega)
			\right|^2\bigg\}\d\zeta <\infty.
		\end{align}
		Hence, the pullback absorption follows from Lemma \ref{Absorbing-N}. Due to the fact that $\tau\mapsto \sup\limits_{s\leq\tau}R(s,\omega)$ is an increasing function, $\mathcal{R}(\tau,\omega)$ is an increasing $\mathfrak{D}$-pullback absorbing set. For $c>0$, let $c_1=\min\{\frac{c}{2}, \nu\lambda \}$ and consider
\begin{align}\label{IRAS3-N}
			&	\lim_{t\to+\infty}e^{-ct}\sup_{s\leq \tau}\|\mathcal{R}(s-t,\vartheta_{-t}\omega)\|^2_{\H} \nonumber\\&\leq \lim_{t\to+\infty}e^{-ct}\left[2+2\widehat{R}_4\sup_{s\leq \tau} R(s-t,\vartheta_{-t}\omega)+2\|\h\|^2_{\H}\left|z(\vartheta_{-t}\omega)\right|^2\right] \nonumber\\ &= 2\widehat{R}_4\lim_{t\to+\infty}e^{-ct}\sup_{s\leq \tau}\int\limits_{-\infty}^{-t}e^{\nu\lambda(\zeta+t)}\bigg\{\|\f(\zeta+s)\|^2_{\L^2(\mathcal{O})}+\left|z(\vartheta_{\zeta}\omega)\right|^2\bigg\}\d\zeta \nonumber\\ &\leq 2\widehat{R}_4\lim_{t\to+\infty}e^{-(c-c_1)t}\sup_{s\leq \tau}\int\limits_{-\infty}^{0}e^{c_1\zeta}\bigg\{\|\f(\zeta+s)\|^2_{\L^2(\mathcal{O})}+\left|z(\vartheta_{\zeta}\omega)\right|^2\bigg\}\d\zeta\nonumber\\ &\ =0,
		\end{align}
		where we have used  \eqref{G3}, \eqref{Z3} and \eqref{IRAS2-N}. It follows from \eqref{IRAS3-N} that $\mathcal{R}\in{\mathfrak{D}}$.
		\vskip 2mm
		\noindent
		\textbf{(ii)} Since $\widetilde{\mathcal{R}}\subseteq\mathcal{R}\in\mathfrak{D}\subseteq\mathfrak{B}$ and the mapping $\omega\mapsto R(\tau,\omega)$ is $\mathscr{F}$-measurable, using \eqref{AB4-N} (for $s=\tau$), we obtain that $\widetilde{\mathcal{R}}$ is a $\mathfrak{B}$-pullback \textbf{random} absorbing set.
	\end{proof}
	\subsection{Backward uniform tail-estimates and backward flattening estimates}
	The following backward uniform tail-estimates and backward flattening estimates for the solution of the system \eqref{CNSE-A} play a key role in establishing the time-semi-uniform asymptotic compactness of the NRDS \eqref{Phi-N}. We obtain these estimates by using a proper cut-off function.
	\begin{lemma}\label{largeradius-N}
		Suppose that Assumption \ref{assumpO} is satisfied. Then, for any $(\tau,\omega,D)\in\R\times\Omega\times{\mathfrak{D}},$ the solution of \eqref{CNSE-A}  satisfies
		\begin{align}\label{ep-N}
			&\lim_{k,t\to+\infty}\sup_{s\leq \tau}\sup_{\v_{0}\in D(s-t,\vartheta_{-t}\omega)}\|\v(s,s-t,\vartheta_{-s}\omega,\v_{0})\|^2_{\L^2(\mathcal{O}^{c}_{k})}=0,
		\end{align}
		where $\mathcal{O}^c_{k}=\mathcal{O}\backslash\mathcal{O}_k$ and $\mathcal{O}_{k}=\{x\in\mathcal{O}:|x|\leq k\}$. In addition, for any $(\tau,\omega,B)\in\R\times\Omega\times{\mathfrak{B}},$ the solution of \eqref{CNSE-A}  satisfies
		\begin{align}\label{ep-N-tau}
			&\lim_{k,t\to+\infty}\sup_{\v_{0}\in B(\tau-t,\vartheta_{-t}\omega)}\|\v(\tau,\tau-t,\vartheta_{-\tau}\omega,\v_{0})\|^2_{\L^2(\mathcal{O}^{c}_{k})}=0.
		\end{align}
	\end{lemma}
	\begin{proof}
		Let $\uprho$ be a smooth function such that $0\leq\uprho(\xi)\leq 1$ for $\xi\in[0,\infty)$ and
		\begin{align}\label{337}
			\uprho(\xi)=\begin{cases*}
				0, \text{ for }0\leq \xi\leq 1,\\
				1, \text{ for } \xi\geq2.
			\end{cases*}
		\end{align}
		By the hypothesis $\uprho$ is constant (hence all derivatives vanish) on
$[0,1]$ and on $[2,\infty)$. Thus the only interval where $\uprho'$ and
$\uprho''$ can be nonzero in the compact interval $[1,2]$. Since $\uprho'\in \mathrm{C}([1,2])$ and $\uprho''\in \mathrm{C}([1,2])$, each attains a finite maximum
on $[1,2]$. Let us define
\begin{align*}
M_1:=\sup_{\xi\in[1,2]}|\rho'(\xi)|,\ 
M_2:=\sup_{\xi\in[1,2]}|\rho''(\xi)|,
\end{align*}
and set $C:=\max\{M_1,M_2\}$. For $\xi\notin[1,2]$ we have $\uprho'(\xi)=\rho''(\xi)=0$,
so the inequalities $|\uprho'(\xi)|\le C$ and $|\uprho''(\xi)|\le C$ hold for all
$\xi\in[0,\infty)$. 
Taking the divergence to the first equation of \eqref{2-A},   we obtain
		\begin{align*}
			-\Delta p&=\nabla\cdot\left[F_N(\|\v+\h z(\vartheta_{t}\omega)\|_{\V}) \cdot\big((\v+{\boldsymbol{g}}z(\vartheta_{t}\omega))\cdot\nabla\big)(\v+{\boldsymbol{g}}z(\vartheta_{t}\omega))\right]-\nabla\cdot\f \\&= F_N(\|\v+\h z(\vartheta_{t}\omega)\|_{\V})\cdot \left\{\nabla\cdot\left[\nabla\cdot\big((\v+\boldsymbol{g}z(\vartheta_{t}\omega))\otimes(\v+\boldsymbol{g}z(\vartheta_{t}\omega))\big)\right]\right\}-\nabla\cdot\f\\
			&= F_N(\|\v+\h z(\vartheta_{t}\omega)\|_{\V}) \sum_{i,j=1}^{3}\frac{\partial^2}{\partial x_i\partial x_j}\big((y_i+{g}_i z(\vartheta_{t}\omega))(y_j+{g}_j z(\vartheta_{t}\omega))\big)-\nabla\cdot\f,
		\end{align*}
		in the weak sense,	which implies that
		\begin{align}\label{p-value}
			p=(-\Delta)^{-1}\left[F_N(\|\v+\h z(\vartheta_{t}\omega)\|_{\V}) \sum_{i,j=1}^{3}\frac{\partial^2}{\partial x_i\partial x_j}\big((y_i+g_i z(\vartheta_{t}\omega))(y_j+g_j z(\vartheta_{t}\omega))\big)-\nabla\cdot\f\right].
		\end{align}
		It follows from \eqref{p-value} and \eqref{FN1} that
		\begin{align}
	\|p\|_{\mathrm{L}^2(\mathcal{O})}&=F_N(\|\v+\h z(\vartheta_{t}\omega)\|_{\V})\cdot\left\|\left[\sum_{i,j=1}^{3}\frac{\partial^2}{\partial x_i\partial x_j}(-\Delta)^{-1}\big((y_i+g_i z(\vartheta_{t}\omega))(y_j+g_j z(\vartheta_{t}\omega))\big)\right]\right\|_{\mathrm{L}^2(\mathcal{O})}\nonumber\\&\quad+\|(-\Delta)^{-1}(\nabla\cdot\f)\|_{\mathrm{L}^2(\mathcal{O})}\nonumber\\  &\leq CF_N(\|\v+\h z(\vartheta_{t}\omega)\|_{\V})\cdot\left\|\sum_{i,j=1}^{3}(-\Delta)^{-1}\big((y_i+g_i z(\vartheta_{t}\omega))(y_j+g_j z(\vartheta_{t}\omega))\big)
			\right\|_{\mathrm{H}^{2}(\mathcal{O})}
			\nonumber\\  &\quad +C\|\nabla\cdot\f\|_{\mathrm{H}^{-2}(\mathcal{O})}\nonumber\\  &\leq CF_N(\|\v+\h z(\vartheta_{t}\omega)\|_{\V})\cdot \left\|\Delta\sum_{i,j=1}^{3}(-\Delta)^{-1}\big((y_i+g_i z(\vartheta_{t}\omega))(y_j+g_j z(\vartheta_{t}\omega))\big)
			\right\|_{\mathrm{L}^{2}(\mathcal{O})} \nonumber\\&\quad+C\|\f\|_{\mathbb{H}^{-1}(\mathcal{O})}
			\nonumber\\  &\leq C F_N(\|\v+\h z(\vartheta_{t}\omega)\|_{\V})\cdot \|\v+\h z(\vartheta_{t}\omega)\|^2_{\L^4(\mathcal{O})}+C\|\f\|_{\mathbb{L}^2(\mathcal{O})}
			\nonumber\\  &\leq C F_N(\|\v+\h z(\vartheta_{t}\omega)\|_{\V})\cdot \|\v+\h z(\vartheta_{t}\omega)\|^{\frac12}_{\H} \|\v+\h z(\vartheta_{t}\omega)\|^{\frac32}_{\V}+C\|\f\|_{\mathbb{L}^2(\mathcal{O})}
			\nonumber\\  &\leq C N  \|\v+\h z(\vartheta_{t}\omega)\|_{\V}+C\|\f\|_{\mathbb{L}^2(\mathcal{O})}
			\nonumber\\  &\leq C N  (\|\v\|_{\V}+\|\h\|_{\V}\left|z(\vartheta_{t}\omega)\right|)+C\|\f\|_{\mathbb{L}^2(\mathcal{O})}\label{p-value-N},
		\end{align}
		where we have also used the elliptic regularity for the Poincar\'e domains with uniformly smooth boundary of class $\mathrm{C}^3$ (cf. Lemma 1, \cite{Heywood}), Ladyzhenskaya's inequality, \eqref{FN1} and \eqref{poin}. Taking the inner product of the first equation in \eqref{2-A} by $\uprho^2\left(\frac{|x|^2}{k^2}\right)\v$ in $\mathbb{L}^2(\mathcal{O})$, we have
		\begin{align}\label{ep1-N}
			&\frac{1}{2} \frac{\d}{\d t}\int_{\mathcal{O}}\uprho^2\left(\frac{|x|^2}{k^2}\right)|\v|^2\d x\nonumber \\&= \underbrace{\nu\int_{\mathcal{O}}(\Delta\v) \uprho^2\left(\frac{|x|^2}{k^2}\right) \v \d x}_{:=I_1(k,t)}-\underbrace{F_N(\|\v+\h z(\vartheta_{t}\omega)\|_{\V})\cdot b\left(\v+\h z,\v+\h z,\uprho^2\left(\frac{|x|^2}{k^2}\right)\v\right)}_{:=I_2(k,t)}\nonumber\\&\quad-\underbrace{\int_{\mathcal{O}}(\nabla p)\uprho^2\left(\frac{|x|^2}{k^2}\right)\v\d x}_{:=I_3(k,t)} + \underbrace{\int_{\mathcal{O}}\f\uprho^2\left(\frac{|x|^2}{k^2}\right)\v\d x + z\int_{\mathcal{O}}
				(\sigma\h+\nu\Delta\h)\uprho^2\left(\frac{|x|^2}{k^2}\right)\v\d x}_{:=I_4(k,t)}.
		\end{align}
		Let us now estimate each term on the right hand side of \eqref{ep1-N}. By \eqref{poin}, integration by parts and  divergence free condition of $\v$, we have
		\begin{align}
			I_{1}(k,t)&= -\nu \int_{\mathcal{O}}\left|\nabla\left(\uprho\left(\frac{|x|^2}{k^2}\right) \v\right)\right|^2  \d x+\nu \int_{\mathcal{O}}\v\nabla\left(\uprho\left(\frac{|x|^2}{k^2}\right)\right)\nabla\left(\uprho\left(\frac{|x|^2}{k^2}\right) \v \right) \d x \nonumber\\&\quad-\nu \int_{\mathcal{O}}\nabla\v \nabla\left(\uprho\left(\frac{|x|^2}{k^2}\right)\right)\uprho\left(\frac{|x|^2}{k^2}\right) \v  \d x\nonumber\\&\leq-\nu \int_{\mathcal{O}}\left|\nabla\left(\uprho\left(\frac{|x|^2}{k^2}\right) \v\right)\right|^2\d x+\frac{\nu}{8} \int_{\mathcal{O}}\left|\nabla\left(\uprho\left(\frac{|x|^2}{k^2}\right) \v\right)\right|^2\d x\nonumber\\&\quad+\frac{\nu\lambda}{8} \int_{\mathcal{O}}\left|\left(\uprho\left(\frac{|x|^2}{k^2}\right) \v\right)\right|^2\d x+\frac{C}{k}\left[\|\v\|^2_{\H}+\|\v\|^2_{\V}\right]\nonumber\\&\leq-\frac{3\nu\lambda}{4} \int_{\mathcal{O}}\left|\left(\uprho\left(\frac{|x|^2}{k^2}\right) \v\right)\right|^2\d x+\frac{C}{k}\|\v\|^2_{\V},\label{ep2-N}
		\end{align}
		and
		\begin{align}
			\left|I_{2}(k,t)\right|
			&= F_N(\|\v+\h z(\vartheta_{t}\omega)\|_{\V}) \cdot \left|4\int_{\mathcal{O}} \uprho\left(\frac{|x|^2}{k^2}\right)\uprho'\left(\frac{|x|^2}{k^2}\right)\frac{x}{k^2}\cdot\v |\v+\h z(\vartheta_{t}\omega)|^2 \d x\right|\nonumber\\&= F_N(\|\v+\h z(\vartheta_{t}\omega)\|_{\V})\cdot \left|4 \int\limits_{\mathcal{O}\cap\{k\leq|x|\leq \sqrt{2}k\}}\uprho\left(\frac{|x|^2}{k^2}\right) \uprho'\left(\frac{|x|^2}{k^2}\right)\frac{x}{k^2}\cdot\v |\v+\h z(\vartheta_{t}\omega)|^2 \d x\right|\nonumber\\&\leq \frac{4\sqrt{2}}{k} F_N(\|\v+\h z(\vartheta_{t}\omega)\|_{\V})  \cdot \int\limits_{\mathcal{O}\cap\{k\leq|x|\leq \sqrt{2}k\}} \left|\uprho'\left(\frac{|x|^2}{k^2}\right)\right| \cdot |\v|\cdot|\v+\h z(\vartheta_{t}\omega)|^2 \d x
			\nonumber\\ & \leq \frac{C}{k} F_N(\|\v+\h z(\vartheta_{t}\omega)\|_{\V}) \cdot \|\v\|_{\H}  \|\v+\h z(\vartheta_{t}\omega)\|^2_{\L^4(\mathcal{O})} 
			\nonumber\\ & \leq \frac{C}{k} F_N(\|\v+\h z(\vartheta_{t}\omega) \|_{\V})\cdot \|\v\|_{\V} \|\v+\h z(\vartheta_{t}\omega)\|^{\frac12}_{\H} \|\v+\h z(\vartheta_{t}\omega)\|^{\frac32}_{\V} 
			\nonumber\\&\leq\frac{CN}{k} \|\v\|_{\V} \|\v+\h z(\vartheta_{t}\omega)\|_{\V}
			\leq\frac{CN}{k}
			\left[\|\v\|^2_{\V}
			+\|\h\|^2_{\V}\left|z(\vartheta_{t}\omega)\right|^2\right],\label{ep3-N}
		\end{align}
		where we have also used H\"older's and Ladyzhenskaya's inequalities, \eqref{FN1} and Young's inequality. Using integration by parts, divergence free condition, \eqref{poin}, \eqref{p-value-N} and Young's inequality, we obtain
		\begin{align}\label{ep9-N} -I_3(k,t)&=2\int_{\mathcal{O}}p\uprho\left(\frac{|x|^2}{k^2}\right)\uprho'\left(\frac{|x|^2}{k^2}\right)\frac{2}{k^2}(x\cdot\v)\d x\nonumber\\&\leq\frac{C}{k} \int\limits_{\mathcal{O}\cap\{k\leq|x|\leq \sqrt{2}k\}}\left|p\right|\left|\v\right|\d x
			\leq \frac{C}{k}\bigg[\|p\|_{\mathrm{L}^2(\mathcal{O})}\|\v\|_{\H}\bigg] \leq \frac{C}{k}\bigg[\|p\|_{\mathrm{L}^2(\mathcal{O})}\|\v\|_{\V}\bigg]
			\nonumber\\&\leq \frac{C}{k}\bigg[\|p\|^2_{\mathrm{L}^2(\mathcal{O})}+\|\v\|^2_{\V}\bigg]
			\leq \frac{C}{k}\bigg[ (1+N^2)  \|\v\|^2_{\V}+N^2\|\h\|^2_{\V}\left|z(\vartheta_{t}\omega)\right|^2+\|\f\|^2_{\mathbb{L}^2(\mathcal{O})}\bigg].
		\end{align}
		Finally, we estimate the remaining term of \eqref{ep1-N} by using  H\"older's and Young's inequalities as follows:
		\begin{align}
			I_4(k,t)&\leq\frac{\nu\lambda}{4}\int_{\mathcal{O}} \left|\uprho\left(\frac{|x|^2}{k^2}\right)\v\right|^2 \d x+C\int_{\mathcal{O}}\uprho^2\left(\frac{|x|^2}{k^2}\right)\bigg[|\f|^2+\left|z(\vartheta_{t}\omega)\right|^2 |\h|^2 +\left|z(\vartheta_{t}\omega)\right|^2|\Delta\h|^2\bigg]\d x.	\label{ep4-N}
		\end{align}
		Combining \eqref{ep1-N}-\eqref{ep4-N}, we get
		\begin{align}\label{ep5-N}
			&\frac{\d}{\d t}\|\v\|^2_{\mathbb{L}^2(\mathcal{O}_k^{c})}+\nu\lambda\|\v\|^2_{\mathbb{L}^2(\mathcal{O}_k^c)} \nonumber\\ &\leq\frac{C}{k} \bigg[ (1+N+N^2)  \|\v\|^2_{\V}
			+ N\|\h\|^2_{\V}\left|z(\vartheta_{t}\omega)\right|^2 +N^2\|\h\|^2_{\V}\left|z(\vartheta_{t}\omega)\right|^2+\|\f\|^2_{\mathbb{L}^2(\mathcal{O})}\bigg]\nonumber\\&\quad+C \int_{\mathcal{O}\cap\{|x|\geq k\}}|\f(x)|^2\d x  +C\left|z(\vartheta_{t}\omega)\right|^2 \int_{\mathcal{O}\cap\{|x|\geq k\}}\left[|\h (x)|^2+|\Delta\h (x)|^2\right]\d x
			\nonumber\\ &\leq\frac{C(1+N^2)}{k} \bigg[   \|\v\|^2_{\V}
			+\|\h\|^2_{\V}\left|z(\vartheta_{t}\omega)\right|^2+\|\f\|^2_{\mathbb{L}^2(\mathcal{O})}\bigg]\nonumber\\&\quad+C \int_{\mathcal{O}\cap\{|x|\geq k\}}|\f(x)|^2\d x  +C\left|z(\vartheta_{t}\omega)\right|^2 \int_{\mathcal{O}\cap\{|x|\geq k\}}\left[|\h (x)|^2+|\Delta\h (x)|^2\right]\d x.
		\end{align}
		Making use of variation of constants formula to the  inequality \eqref{ep5-N} on $(s-t,s)$ and replacing $\omega$ by $\vartheta_{-s}\omega$, we find, for $s\leq\tau,\ t\geq 0$ and $\omega\in\Omega$,
		\begin{align}\label{ep6-N}
			\|\v(s,s-t,\vartheta_{-s}\omega,\v_{0})\|^2_{\mathbb{L}^2(\mathcal{O}_k^{c})} & \leq e^{-\nu\lambda t}\|\v_{0}\|^2_{\H} +\frac{C}{k}\bigg[\int_{s-t}^{s}e^{\nu\lambda(\zeta-s)}\|\v(\zeta,s-t,\vartheta_{-s}\omega,\v_{0})\|^2_{\V}\d\zeta \nonumber\\&\quad+\int_{-t}^{0}e^{\nu\lambda\zeta}\bigg\{\|\h\|^2_{\V}\left|z(\vartheta_{\zeta}\omega)\right|^{2}+\|\f(\zeta+s)\|^2_{\L^2(\mathcal{O})}\bigg\}\d\zeta\bigg]
			\nonumber\\&\quad+C\int_{-t}^{0}e^{\nu\lambda\zeta}\left|z(\vartheta_{\zeta}\omega)\right|^2\d\zeta\left[\int\limits_{\mathcal{O}\cap\{|x|\geq k\}}(|\h (x)|^2+|\Delta\h (x)|^2)\d x\right]
			\nonumber\\&\quad+ C\int_{-t}^{0}e^{\nu\lambda\zeta} \int\limits_{\mathcal{O}\cap\{|x|\geq k\}}|\f(x,\zeta+s)|^2\d x\d\zeta.
		\end{align}
		Now, using the fact that $\h\in\D(\A)$, \eqref{f3-N}, the definition of collections $\mathfrak{D}$ and $\mathfrak{B}$ (see Subsection \ref{CoRS}), \eqref{Z3}, \eqref{IRAS2-N} and Lemma \ref{Absorbing-N}, one can complete the proof.
	\end{proof}

	The following lemma provides the backward flattening estimates for the solution of the system \eqref{2-A}. For each $k\geq1$, we let
	\begin{align}\label{varrho_k}
		\varrho_k(x):= 1-\uprho\left(\frac{|x|^2}{k^2}\right),  \ \ x\in\mathcal{O}.
	\end{align}
	Let $\bar{\v}:=\varrho_k\v$ for $\v:=\v(s,s-t,\omega,\v_{\tau})\in\H$. Then $\bar{\v}\in\L^2(\mathcal{O}_{\sqrt{2}k})$ which has the orthogonal decomposition:
	\begin{align}\label{DirectProd}
		\bar{\v}=\P_{i}\bar{\v}\oplus(\I-\P_{i})\bar{\v}=:\bar{\v}_{i,1}+\bar{\v}_{i,2},  \ \ \text{ for each } \ i\in\N,
	\end{align}
	where $\P_i:\L^2(\mathcal{O}_{\sqrt{2}k})\to\H_{i}:=\mathrm{span}\{\boldsymbol{e}_1,\boldsymbol{e}_2,\ldots,\boldsymbol{e}_i\}\subset\L^2(\mathcal{O}_{\sqrt{2}k})$ is a canonical projection and $\{\boldsymbol{e}_m\}_{m=1}^{\infty}$ is a family of eigenfunctions for $-\Delta$ in $\L^2(\mathcal{O}_{\sqrt{2}k})$ with corresponding  eigenvalues $0<\lambda_1\leq\lambda_2\leq\cdots\leq\lambda_m\to\infty$ as $m\to\infty$. We also have that $$\varrho_k \Delta\v=\Delta\bar{\v}-\v\Delta\varrho_k-2\nabla\varrho_k\cdot\nabla\v.$$ Furthermore, for  $\boldsymbol{\psi}\in\H_0^1(\mathcal{O}_{\sqrt{2}k})$, we have
	\begin{align}\label{poin-i}
		\mathrm{P}_i\boldsymbol{\psi}&=\sum_{m=1}^{i}(\boldsymbol{\psi},\boldsymbol{e}_m)\boldsymbol{e}_m,\  \A^{1/2}\mathrm{P}_i\boldsymbol{\psi}=\sum_{m=1}^{i}\lambda^{1/2}_j(\boldsymbol{\psi},\boldsymbol{e}_m)\boldsymbol{e}_m,\nonumber\\ (\I-\mathrm{P}_i)\boldsymbol{\psi}&=\sum_{m=i+1}^{\infty}(\boldsymbol{\psi},\boldsymbol{e}_m)\boldsymbol{e}_m,\ \A^{1/2}(\I-\P_i)\boldsymbol{\psi}=\sum_{m=i+1}^{\infty}\lambda^{1/2}_m(\boldsymbol{\psi},\boldsymbol{e}_m)\boldsymbol{e}_m, \nonumber\\
		\|\nabla(\I-\P_i)\boldsymbol{\psi}\|_{\L^2(\mathcal{O}_{\sqrt{2}k})}^2&=\|\A^{1/2}(\I-\P_i)\boldsymbol{\psi}\|_{\L^2(\mathcal{O}_{\sqrt{2}k})}^2=\sum_{m=i+1}^{\infty}\lambda_m|(\boldsymbol{\psi},\boldsymbol{e}_m)|^2\nonumber\\&\geq \lambda_{i+1}\sum_{m=i+1}^{\infty}|(\boldsymbol{\psi},\boldsymbol{e}_m)|^2=\lambda_{i+1}\|(\I-\P_i)\boldsymbol{\psi}\|_{\L^2(\mathcal{O}_{\sqrt{2}k})}^2.
	\end{align}

	\begin{lemma}\label{Flattening-N}
		Suppose that $\f\in\mathrm{L}^2_{\mathrm{loc}}(\R;\L^2(\mathcal{O}))$ and Assumption \ref{assumpO} is satisfied. Let $(\tau,\omega,D)\in\R\times\Omega\times\mathfrak{D}$ and $k\geq1$ be fixed. Then
		\begin{align}\label{FL-P}
			\lim_{i,t\to+\infty}\sup_{s\leq \tau}\sup_{\v_{0}\in D(s-t,\vartheta_{-t}\omega)}\|(\I-\P_{i})\bar{\v}(s,s-t,\vartheta_{-s}\omega,\bar{\v}_{0,2})\|^2_{\L^2(\mathcal{O}_{\sqrt{2}k})}=0,
		\end{align}
		where $\bar{\v}_{0,2}=(\I-\P_{i})(\varrho_k\v_{0})$. In addition, for any $(\tau,\omega,B)\in\R\times\Omega\times\mathfrak{B}$, we have
        \begin{align}\label{FL-P-tau}
			\lim_{i,t\to+\infty}\sup_{\v_{0}\in B(\tau-t,\vartheta_{-t}\omega)}\|(\I-\P_{i})\bar{\v}(\tau,\tau-t,\vartheta_{-\tau}\omega,\bar{\v}_{0,2})\|^2_{\L^2(\mathcal{O}_{\sqrt{2}k})}=0.
		\end{align}
	\end{lemma}

	\begin{proof}
		Multiplying the first equation of (3.2) by $\varrho_k$, we find
		\begin{align}\label{FL1}
			&\frac{\d\bar{\v}}{\d t}-\nu\Delta\bar{\v}+\varrho_k\cdot F_N(\|\v+\h z(\vartheta_{t}\omega)\|_{\V}) \cdot \left[\big((\v+\h z)\cdot\nabla\big)(\v+\h z)\right]+\varrho_k\nabla p\nonumber\\&=-\nu\v\Delta\varrho_k-2\nu\nabla\varrho_k\cdot\nabla\v+\varrho_k\f +\sigma\varrho_k\h z+\nu z\varrho_k\Delta\h.
		\end{align}
		Applying $(\I-\P_i)$ to the equation \eqref{FL1} and taking the inner product of the resulting equation with $\bar{\v}_{i,2}$ in $\L^2(\mathcal{O}_{\sqrt{2}k})$, we have
		\begin{align}\label{FL2}
			&\frac{1}{2}\frac{\d}{\d t}\|\bar{\v}_{i,2}\|^2_{\L^2(\mathcal{O}_{\sqrt{2}k})} +\nu\|\nabla\bar{\v}_{i,2}\|^2_{\L^2(\mathcal{O}_{\sqrt{2}k})}\nonumber\\&=-\underbrace{F_N(\|\v+\h z(\vartheta_{t}\omega)\|_{\V})\cdot\sum_{q,m=1}^{3}\int\limits_{\mathcal{O}_{\sqrt{2}k}}\left(\I-\P_i\right)\bigg[(y_{q}+g_{q}z)\frac{\partial(y_{m}+g_{m}z)}{\partial x_q}\varrho^2_k(x)y_{m}\bigg]\d x}_{:=J_1}\nonumber\\&\quad-\underbrace{\left\{\nu\big(\v\Delta\varrho_k,\bar{\v}_{i,2}\big)+2\nu\big(\nabla\varrho_k\cdot\nabla\v,\bar{\v}_{i,2}\big)-\big(\varrho_k\f,\bar{\v}_{i,2}\big) - z\varrho_k\big(\sigma\h+\nu\Delta\h,\bar{\v}_{i,2}\big)\right\}}_{:=J_2}\nonumber\\&\quad-\underbrace{\big(\varrho_k(x)\nabla p, \bar{\v}_{i,2}\big)}_{:=J_3}.
		\end{align}
		
		Next, we estimate each term of \eqref{FL2} in the following way: Using integration by parts, divergence free condition of $\v$, \eqref{poin-i} (one can assume that $\lambda_{i}\geq1$), H\"older's, Ladyzhenskaya's inequalities, \eqref{FN1} and Young's inequality, we find
		\begin{align}
			\left|J_1\right| &=F_N(\|\v+\h z(\vartheta_{t}\omega)\|_{\V})\left|2\int_{\mathcal{O}_{\sqrt{2}k}}\left(\I-\P_i\right)\bigg[\uprho'\left(\frac{|x|^2}{k^2}\right)\frac{x}{k^2}\cdot\left\{\varrho_k(x)\v\right\}|\v+\boldsymbol{g}z(\vartheta_{t}\omega)|^2\bigg]\d x\right|
			\nonumber\\&\leq C F_N(\|\v+\h z(\vartheta_{t}\omega)\|_{\V})\cdot \|\bar{\v}_{i,2}\|_{\L^2(\mathcal{O}_{\sqrt{2}k})} \|\v+\boldsymbol{g}z(\vartheta_{t}\omega)\|^2_{\L^4(\mathcal{O})}
			\nonumber\\&\leq C \lambda_{i+1}^{-\frac12} F_N(\|\v+\h z(\vartheta_{t}\omega)\|_{\V})\cdot \|\nabla\bar{\v}_{i,2}\|_{\L^2(\mathcal{O}_{\sqrt{2}k})} \|\v+\boldsymbol{g}z(\vartheta_{t}\omega)\|^{\frac12}_{\H} \|\v+\boldsymbol{g}z(\vartheta_{t}\omega)\|^{\frac32}_{\V}
			\nonumber\\&\leq C N \lambda_{i+1}^{-\frac12} \|\nabla\bar{\v}_{i,2}\|_{\L^2(\mathcal{O}_{\sqrt{2}k})} \|\v+\boldsymbol{g}z(\vartheta_{t}\omega)\|_{\V} 
			\nonumber\\ & \leq \frac{\nu}{6}\|\nabla\bar{\v}_{i,2}\|^2_{\L^2(\mathcal{O}_{\sqrt{2}k})} + C N^2 \lambda_{i+1}^{-1} \left[\|\v\|^2_{\V} + |z(\vartheta_{t}\omega)|^2\|\boldsymbol{g}\|^2_{\V}\right],\\   
			\left|J_2\right|  &\leq C\bigg[\|\v\|_{\H}+ \|\v\|_{\V}+\|\f\|_{\L^2(\mathcal{O})}+\left|z(\vartheta_{t}\omega)\right|(\|\h\|_{\H}+\|\h\|_{\D(\A)})\bigg]\|\bar{\v}_{i,2}\|_{\L^2(\mathcal{O}_{\sqrt{2}k})}
			\nonumber\\ &\leq C\lambda^{-1/2}_{i+1}\bigg[\|\v\|_{\V}+\|\f\|_{\L^2(\mathcal{O})}+\left|z(\vartheta_{t}\omega)\right|(\|\h\|_{\H}+\|\h\|_{\D(\A)})\bigg]\|\nabla\bar{\v}_{i,2}\|_{\L^2(\mathcal{O}_{\sqrt{2}k})}
			\nonumber\\ & \leq\frac{\nu}{6}\|\nabla\bar{\v}_{i,2}\|^2_{\L^2(\mathcal{O}_{\sqrt{2}k})}+ C\lambda^{-1}_{i+1}\bigg[\|\v\|^2_{\V}+\|\f\|^2_{\L^2(\mathcal{O})} + \left|z(\vartheta_{t}\omega)\right|^2(\|\h\|^2_{\H}+\|\h\|^2_{\D(\A)})\bigg],\label{FL5}\\  
			\left|J_3\right| &=\left|\int_{\mathcal{O}_{\sqrt{2}k}}(\I-\P_i)\bigg[\nabla p\left\{\varrho_k(x)\right\}^2\v\bigg]\d x\right|=2\left|\int_{\mathcal{O}_{\sqrt{2}k}}(\I-\P_i)\bigg[\uprho'\left(\frac{|x|^2}{k^2}\right)p\varrho_k(x)\v\bigg]\d x\right|
			\nonumber\\&\leq C\|p\|_{\mathrm{L}^2(\mathcal{O})}\|\bar{\v}_{i,2}\|_{\L^2(\mathcal{O}_{\sqrt{2}k})}
			\nonumber\\&\leq C \lambda_{i+1}^{-\frac12}\left[ N  (\|\v\|_{\V}+\left|z(\vartheta_{t}\omega)\right|\|\h\|_{\V})+ \|\f\|_{\mathbb{L}^2(\mathcal{O})}\right]\|\nabla\bar{\v}_{i,2}\|_{\L^2(\mathcal{O}_{\sqrt{2}k})}
			\nonumber\\ & \leq\frac{\nu}{6}\|\nabla\bar{\v}_{i,2}\|^2_{\L^2(\mathcal{O}_{\sqrt{2}k})}+ C\lambda^{-1}_{i+1}\bigg[N^2\|\v\|^2_{\V}+ N^2\left|z(\vartheta_{t}\omega)\right|^2\|\h\|^2_{\V}+\|\f\|^2_{\L^2(\mathcal{O})} \bigg],\label{FL6}
		\end{align}
		where we have also used the estimate \eqref{p-value-N} in \eqref{FL6}. Now, combining \eqref{FL2}-\eqref{FL6} and using \eqref{poin} in the resulting inequality, we arrive at
		\begin{align}\label{FL7}
			&\frac{\d}{\d t}\|\bar{\v}_{i,2}\|^2_{\L^2(\mathcal{O}_{\sqrt{2}k})} + \nu\lambda\|\bar{\v}_{i,2}\|^2_{\L^2(\mathcal{O}_{\sqrt{2}k})} \leq  C (1+N^2)\lambda_{i+1}^{-1}\bigg[\|\v\|^2_{\V}+ \left|z(\vartheta_{t}\omega)\right|^2\|\h\|^2_{\D(\A)}+\|\f\|^2_{\L^2(\mathcal{O})} \bigg].
		\end{align}
		By using the  variation of constants formula in \eqref{FL7}, we find
		\begin{align}\label{FL8}
			&\|(\I-\P_{i})\bar{\v}(s,s-t,\vartheta_{-s}\omega,\bar{\v}_{0,2})\|^2_{\L^2(\mathcal{O}_{\sqrt{2}k})}\nonumber\\&\leq e^{-\nu\lambda t}\|(\I-\P_i)(\varrho_k\v_{0})\|^2_{\L^2(\mathcal{O}_{\sqrt{2}k})} +C (1+N^2)\lambda_{i+1}^{-1}\bigg[\underbrace{\int_{s-t}^{s}e^{\nu\lambda(\zeta-s)}\|\v(\zeta,s-t,\vartheta_{-s}\omega,\v_{0})\|^2_{\V}\d\zeta}_{L_1(s,t)}\nonumber\\&\quad+\underbrace{\|\h\|^2_{\D(\A)}\int_{s-t}^{s}e^{\nu\lambda(\zeta-s)}\left|z(\vartheta_{\zeta}\omega)\right|^{2}\d\zeta}_{L_2(t)} + \underbrace{\int_{-t}^{0}e^{\nu\lambda\zeta}\|\f(\zeta+s)\|^2_{\L^2(\mathcal{O})}\d\zeta}_{L_3(s,t)}.
		\end{align}
		We have from \eqref{Z3}, \eqref{G3}, \eqref{AB1-N} and \eqref{IRAS2-N} that
		\begin{equation}\label{FL9}
			\sup_{s\leq \tau}L_1(s,t)<+\infty,\	\ \sup_{s\leq \tau}L_3(s,t)<+\infty \ \ \text{ and }\ \  L_2(t)<+\infty,
		\end{equation}
		for sufficiently large $t>0$. Further,
		\begin{align}\label{FL11}
			\|(\I-\P_i)(\varrho_k\v_{0})\|^2_{\L^2(\mathcal{O}_{\sqrt{2}k})}\leq C\|\v_{0}\|^2_{\H},
		\end{align}
		for all $\v_{0}\in D(s-t,\vartheta_{-t}\omega)$ and $s\leq\tau$. Now, using the definition of collections $\mathfrak{D}$ and $\mathfrak{B}$ (see Subsection \ref{CoRS}), and \eqref{FL9}-\eqref{FL11} in \eqref{FL8}, we obtain \eqref{FL-P} and \eqref{FL-P-tau}, as desired, which  completes the proof.
	\end{proof}

	\subsection{Proof of Theorem \ref{MT1-N}}\label{thm1.4}
	In this subsection, we demonstrate the main results of this section, that is, the existence of $\mathfrak{D}$-pullback random attractors and their asymptotic autonomous robustness for the system \eqref{SNSE-A}. The proof of this theorem is  divided into the following seven steps:
	\vskip 2mm
	\noindent
	\textbf{Step I:} \textit{$\mathfrak{D}$-pullback time-semi-uniform asymptotic compactness of $\Phi$.} It is enough to prove that for each $(\tau,\omega,D)\in\R\times\Omega\times\mathfrak{D}$, arbitrary sequences $s_n\leq\tau$, $t_{n}\to+\infty$ and $\v_{0,n}\in D(s_n-t_n,\vartheta_{-t_n}\omega)$,  the sequence $$\v_n:=\v(s_n,s_n-t_n,\vartheta_{-s_n}\omega,\v_{0,n})$$ is pre-compact in $\H$. Let $E_{N}=\{\v_n:n\geq N\}, \ N=1,2,\ldots.$ In order to prove the pre-compactness of the sequence $\v_n$, it is enough to show that the Kuratowski measure $\kappa_{\H}(E_{N})\to0$ and $N\to+\infty$, (see Lemma \ref{K-BAC}).
	
	Since, $t_n\to +\infty$, there exists $\mathcal{N}\in\N$ such that $t_n\geq \mathfrak{T}$ for all $n\geq \mathcal{N}$. For each $\eta>0$, by Lemma \ref{largeradius-N}, there exist $\mathcal{N}_1\geq \mathcal{N}$ and $K_0\geq1$ such that
	\begin{align}\label{MTA1}
		\left\|\uprho\left(\frac{|x|^2}{k^2}\right)\v_n\right\|_{\L^2(\mathcal{O}^{c}_{K})}\leq\frac{\eta}{2}, 
	\end{align}
	for all $n\geq \mathcal{N}_1$ and for all  $K\geq K_0$, where $\mathcal{O}^{c}_{K}=\mathcal{O}\backslash\mathcal{O}_{K}$ and  $\mathcal{O}_{K}=\{x\in\mathcal{O}:|x|\leq K\}.$  By Lemma \ref{Flattening-N}, there exist $i_0\in\N$ and $\mathcal{N}_2\geq\mathcal{N}_1$ such that
	\begin{align}\label{MTA2}
		\|(\I-\P_i)(\varrho_{K}\v_n)\|_{\L^2(\mathcal{O}_{\sqrt{2}K})}\leq\frac{\eta}{2}, 
	\end{align}
	for all $n\geq \mathcal{N}_2$ and for all  $i\geq i_0$.
	
	Now, Lemma \ref{Absorbing-N} provides us that the set $E_{\mathcal{N}_2}$ is bounded in $\H$. Then, the set $\{\varrho_{K}\v_n:n\geq\mathcal{N}_2\}$ is bounded in $\L^2(\mathcal{O}_{\sqrt{2}K})$. Hence, by the finite-dimensional range of $\P_i$, $\P_{i}\{\varrho_{K}\v_n:n\geq\mathcal{N}_2\}$ is pre-compact in $\L^2(\mathcal{O}_{\sqrt{2}K})$, from which we conclude that
	\begin{align}\label{MTA3}
		\kappa_{\L^2(\mathcal{O}_{\sqrt{2}K})}\left(\P_{i}\{\varrho_{K}\v_n:n\geq\mathcal{N}_2\}\right)=0.
	\end{align}
	It follows from \eqref{MTA2}-\eqref{MTA3} and \cite[Theorem 1.4]{Rakocevic} that
	\begin{align}\label{MTA4}
		&\kappa_{\L^2(\mathcal{O}_{\sqrt{2}K})}\left(\{\varrho_{K}\v_n:n\geq\mathcal{N}_2\}\right)\nonumber\\&\leq\kappa_{\L^2(\mathcal{O}_{\sqrt{2}K})}\left(\P_{i}\{\varrho_{K}\v_n:n\geq\mathcal{N}_2\}\right)+\kappa_{\L^2(\mathcal{O}_{\sqrt{2}K})}\left((\I-\P_{i})\{\varrho_{K}\v_n:n\geq\mathcal{N}_2\}\right)\nonumber\\& \leq\frac{\eta}{2}.
	\end{align}
	We infer from \eqref{MTA1} and \eqref{MTA4} that
	\begin{align*}
		\kappa_{\H}(E_{\mathcal{N}_2})\leq\kappa_{\L^2(\mathcal{O}_{\sqrt{2}K})}(\varrho_{K}E_{\mathcal{N}_2})+\kappa_{\L^2(\mathcal{O}^{c}_{K})}\left(\uprho\left(\frac{|x|^2}{k^2}\right)E_{\mathcal{N}_2}\right)\leq \eta,
	\end{align*}
	which shows that $\Phi$ is time-semi-uniformly asymptotically compact in $\H$.
	\vskip 2mm
	\noindent
	\textbf{Step II:} \textit{$\mathfrak{B}$-pullback asymptotically compactness of $\Phi$.} 
     Using arguments similar to those in \textbf{Step I}, we  prove that for each $(\tau,\omega,B)\in\R\times\Omega\times\mathfrak{B}$, arbitrary sequence $t_{n}\to+\infty$ and $\v_{0,n}\in B(\tau-t_n,\vartheta_{-t_n}\omega)$,  the sequence 
	$$\v^n:=\v(\tau,\tau-t_n,\vartheta_{-\tau}\omega,\v_{0,n})$$ 
	is pre-compact in $\H$. We denote $E^{N}=\{\v^n:n\geq N\}, \ N=1,2,\ldots.$ 
	In view of Lemma \ref{largeradius-N}, for each $\eta>0$, there exist $\mathcal{N}_3\in\N$ and $K_1\geq 1$ such that
	\begin{align*}
		\left\|\uprho\left(\frac{|x|^2}{k^2}\right)\v^n\right\|_{\L^2(\mathcal{O}^{c}_{K})}\leq\frac{\eta}{2}, 
	\end{align*}
	for all $n\geq \mathcal{N}_3$ and for all  $K\geq K_1$. 
	 In view of Lemma \ref{Flattening-N}, there exist $i_1\in\N$ and $\mathcal{N}_4\geq\mathcal{N}_3$ such that
	\begin{align*}
		\|(\I-\P_i)(\varrho_{K}\v^n)\|_{\L^2(\mathcal{O}_{\sqrt{2}K})}\leq\frac{\eta}{2}, 
	\end{align*}
	for all $n\geq \mathcal{N}_4$ and for all  $i\geq i_1$. Since the set $E^{\mathcal{N}_4}$ is bounded in $\H$. Then, the set $\{\varrho_{K}\v^n:n\geq\mathcal{N}_2\}$ is bounded in $\L^2(\mathcal{O}_{\sqrt{2}K})$. Hence, $\P_{i}\{\varrho_{K}\v^n:n\geq\mathcal{N}_2\}$ is pre-compact in $\L^2(\mathcal{O}_{\sqrt{2}K})$, from which we deduce 
	\begin{align*}
		\kappa_{\L^2(\mathcal{O}_{\sqrt{2}K})}\left(\P_{i}\{\varrho_{K}\v^n:n\geq\mathcal{N}_4\}\right)=0.
	\end{align*}
	Therefore, we have 
	\begin{align*}
		&\kappa_{\L^2(\mathcal{O}_{\sqrt{2}K})}\left(\{\varrho_{K}\v^n:n\geq\mathcal{N}_4\}\right)\nonumber\\&\leq\kappa_{\L^2(\mathcal{O}_{\sqrt{2}K})}\left(\P_{i}\{\varrho_{K}\v^n:n\geq\mathcal{N}_4\}\right)+\kappa_{\L^2(\mathcal{O}_{\sqrt{2}K})}\left((\I-\P_{i})\{\varrho_{K}\v^n:n\geq\mathcal{N}_4\}\right)\nonumber\\&\leq\frac{\eta}{2}.
	\end{align*}
	From the above estimates, we obtain the following.
	\begin{align*}
		\kappa_{\H}(E^{\mathcal{N}_4})\leq\kappa_{\L^2(\mathcal{O}_{\sqrt{2}K})}(\varrho_{K}E^{\mathcal{N}_4})+\kappa_{\L^2(\mathcal{O}^{c}_{K})}\left(\uprho\left(\frac{|x|^2}{k^2}\right)E^{\mathcal{N}_4}\right)\leq \eta,
	\end{align*}
	which shows that $\Phi$ is asymptotically compact in $\H$.
	\vskip 2mm
	\noindent
	\textbf{Step III:} \textit{$\mathfrak{D}$-pullback attractor $\mathcal{A}(\tau,\omega)$.} Proposition \ref{IRAS-N}  and \textbf{Step I} ensure that $\Phi$ has a $\mathfrak{D}$-pullback absorbing set and $\Phi$ is $\mathfrak{D}$-pullback asymptotically compact in $\H$, respectively. Hence, by the abstract theory established in \cite{SandN_Wang}, $\Phi$ has a unique $\mathfrak{D}$-pullback attractor $\mathcal{A}$ which is given by
	\begin{align}\label{A1}
		\mathcal{A}=\bigcap\limits_{t_0>0}\overline{\bigcup\limits_{t\geq t_0}\Phi(t,\tau-t,\vartheta_{-t}\omega)\mathcal{R}(\tau-t,\vartheta_{-t}\omega)}^{\H}.
	\end{align}
	However, we remark that the $\mathscr{F}$-measurability of $\mathcal{A}$ is unknown, therefore we say that $\mathcal{A}$ is a $\mathfrak{D}$-pullback attractor instead of $\mathfrak{D}$-pullback random attractor.
	\vskip 2mm
	\noindent
	\textbf{Step IV:} \textit{$\mathfrak{B}$-pullback attractor $\widetilde{\mathcal{A}}(\tau,\omega)$.} Proposition \ref{IRAS-N} (part (ii)) and \textbf{Step II} ensure that $\Phi$ has a $\mathfrak{B}$-pullback random absorbing set and $\Phi$ is $\mathfrak{B}$-pullback asymptotically compact, respectively. Hence, by the abstract theory established in \cite{SandN_Wang}, $\Phi$ has a unique $\mathfrak{B}$-pullback random attractor $\widetilde{\mathcal{A}}$ which is given by
	\begin{align}\label{A2}
		\widetilde{\mathcal{A}}=\bigcap\limits_{t_0>0}\overline{\bigcup\limits_{t\geq t_0}\Phi(t,\tau-t,\vartheta_{-t}\omega)\widetilde{\mathcal{R}}(\tau-t,\vartheta_{-t}\omega)}^{\H}.
	\end{align}
	\vskip 2mm
	\noindent
	\textbf{Step V:} \textit{Time-semi-uniformly compactness of $\mathcal{A}(\tau,\omega)$.} We prove that $\bigcup\limits_{s\leq\tau}\mathcal{A}(s,\omega)$ is pre-compact in $\H$. Let $\{\u_n\}_{n=1}^{\infty}$ be an arbitrary sequence extracted from $\bigcup\limits_{s\leq\tau}\mathcal{A}(s,\omega)$. Then, we can find a sequence $s_n\leq\tau$ such that $\u_n\in\mathcal{A}(s_n,\omega)$ for each $n\in\N$. Now, for the sequence $t_n\to +\infty$, by the invariance property of $\mathcal{A},$ we have $\u_n\in\Phi(t_n,s_n-t_n,\vartheta_{-t_{n}}\omega)\mathcal{A}(s_n-t_n,\vartheta_{-t_{n}}\omega)$. It implies that we can find $\u_{0,n}\in\mathcal{A}(s_n-t_n,\vartheta_{-t_{n}}\omega)$ such that $\u_n=\Phi(t_n,s_n-t_n,\vartheta_{-t_{n}}\omega,\u_{0,n})$, where $\u_{0,n}\in\mathcal{A}(s_n-t_n,\vartheta_{-t_{n}}\omega)\subseteq\mathcal{R}(s_n-t_n,\vartheta_{-t_{n}}\omega)$ with $s_n\leq\tau$ and $\mathcal{R}\in\mathfrak{D}$.  Then,  it follows from  the $\mathfrak{D}$-pullback time-semi-uniform asymptotic compactness of $\Phi$ that the sequence $\{\u_n\}_{n=1}^{\infty}$ is pre-compact in $\H$. Hence, $\bigcup\limits_{s\leq\tau}\mathcal{A}(s,\omega)$ is pre-compact in $\H$.
	\vskip 2mm
	\noindent
	\textbf{Step VI:} \textit{$\mathcal{A}(\tau,\omega)=\widetilde{\mathcal{A}}(\tau,\omega)$. This  implies that $\Phi$ has a unique pullback \textbf{random} attractor which is time-semi-uniformly compact in $\H$.} Let us fix $(\tau,\omega)\in\R\times\Omega$. Since, by Proposition \ref{IRAS-N}, $\mathcal{R}(\tau,\omega)\supseteq\widetilde{\mathcal{R}}(\tau,\omega)$, it follows from \eqref{A1} and \eqref{A2} that $\mathcal{A}(\tau,\omega)\supseteq\widetilde{\mathcal{A}}(\tau,\omega)$. At the same time, since $\mathcal{A}\in\mathfrak{B}\subseteq\mathfrak{D}$, the invariance property of $\mathcal{A}$ and the attraction property of $\widetilde{\mathcal{A}}$ imply that
	\begin{align*}
		\text{dist}_{\H}(\mathcal{A}(\tau,\omega),\widetilde{\mathcal{A}}(\tau,\omega))=\text{dist}_{\H}(\Phi(t,\tau-t,\vartheta_{-t}\omega)\mathcal{A}(\tau-t,\vartheta_{-t}\omega),\widetilde{\mathcal{A}}(\tau,\omega))\to 0,
	\end{align*}
	as $t\to+\infty$. This indicates that $\mathcal{A}(\tau,\omega)\subseteq\widetilde{\mathcal{A}}(\tau,\omega)$. Hence $\mathcal{A}(\tau,\omega)=\widetilde{\mathcal{A}}(\tau,\omega)$, which, in view of the $\mathscr{F}$-measurability of $\widetilde{\mathcal{A}}(\tau,\omega)$, shows that $\mathcal{A}(\tau,\omega)$ is $\mathscr{F}$-measurable.
	\vskip 2mm
	\noindent
	\textbf{Step VII:} \textit{Proof of \eqref{MT2-N} and \eqref{MT3-N}.} By using {Propositions} \ref{IRAS-N} and \ref{Back_conver-N} and \emph{time-semi-uniformly compactness} of $\mathcal{A}$, and applying similar arguments as in the proof of \cite[Theorem 5.2]{CGTW}, one can complete the proof. Since, the arguments are similar to the proof of \cite[Theorem 5.2]{CGTW}, we are not repeating here.

	\section{Asymptotically Autonomous Robustness of Random Attractors for \eqref{1}: Linear Multiplicative Noise}\label{sec4}\setcounter{equation}{0}
	In this section, we consider 3D SGMNSE driven by a linear multiplicative white noise ($S(\u)=\u$ in \eqref{1}) and establish the existence and asymptotic autonomous robustness of $\mathfrak{D}$-pullback random attractors. Let us define $\v(t,\tau,\omega,\v_{\tau}):=e^{-z(\vartheta_{t}\omega)}\u(t,\tau,\omega,\u_{\tau})\ \text{ with  }\  \v_{\tau}=e^{-z(\vartheta_{\tau}\omega)}\u_{\tau},$ where $z$ satisfies \eqref{OU2} and $\u$ is the solution of \eqref{1} with $S(\u)=\u$. Then $\v$ satisfies:
	\begin{equation}\label{2-M}
		\left\{
		\begin{aligned}
			\frac{\d\v}{\d t}-\nu \Delta\v&+F_{N}(e^{z(\vartheta_{t}\omega)}\|\v\|_{\V})\cdot  e^{z(\vartheta_{t}\omega)}(\v\cdot\nabla)\v+e^{-z(\vartheta_{t}\omega)}\nabla p\\&= \f e^{-z(\vartheta_{t}\omega)} +\sigma z(\vartheta_t\omega)\v,  && \text{ in }\  \mathcal{O}\times(\tau,\infty), \\ \nabla\cdot\v&=0, &&  \text{ in } \ \ \mathcal{O}\times[\tau,\infty), \\
			\v&=0, && \text{ on } \ \ \partial\mathcal{O}\times[\tau,\infty),\\
			\v(\tau)&=\v_{0}=e^{-z(\vartheta_{\tau}\omega)}\u_{0},  && \ x\in \mathcal{O},
		\end{aligned}
		\right.
	\end{equation}
	as well as (projected form) for $ t> \tau, \tau\in\R ,$
	\begin{equation}\label{CNSE-M}
		\left\{
		\begin{aligned}
			\frac{\d\v}{\d t}+\nu \A\v+ F_{N}(e^{z(\vartheta_{t}\omega)}\|\v\|_{\V}) \cdot e^{z(\vartheta_{t}\omega)}\B\big(\v\big)&=\mathcal{P}\f e^{-z(\vartheta_{t}\omega)} + \sigma z(\vartheta_t\omega)\v , \\
			\v(\tau)&=\v_{0}=e^{-z(\vartheta_{\tau}\omega)}\u_{0},
		\end{aligned}
		\right.
	\end{equation}
	in $\V^*$. Next, we consider the 3D autonomous SGMNSE with a linear multiplicative white noise corresponding to the non-autonomous system \eqref{SNSE} with $S(\u)=\u$  as
	\begin{equation}\label{A-SNSE-M}
		\left\{
		\begin{aligned}
			\frac{\d\widetilde{\u}}{\d t}+\nu \A\widetilde{\u}+\B_{N}(\widetilde{\u})&=\mathcal{P}\f_{\infty} +\widetilde{\u}\circ\frac{\d \W}{\d t} , \\
			\widetilde{\u}(0)&=\widetilde{\u}_{0}.
		\end{aligned}
		\right.
	\end{equation}
	Let $\widetilde{\v}(t,\omega)=e^{-z(\vartheta_{t}\omega)}\widetilde{\u}(t,\omega)$. Then, the system \eqref{A-SNSE-M} can be written in the following pathwise deterministic system: for $ t> \tau, \tau\in\R ,$
	\begin{equation}\label{A-CNSE-M}
		\left\{
		\begin{aligned}
			\frac{\d\widetilde{\v}}{\d t}+\nu \A\widetilde{\v}+ F_{N}(e^{z(\vartheta_{t}\omega)}\|\wi \v\|_{\V}) \cdot e^{z(\vartheta_{t}\omega)}\B\big(\widetilde{\v}\big)&=\mathcal{P}\f_{\infty} e^{-z(\vartheta_{t}\omega)} + \sigma z(\vartheta_t\omega)\widetilde{\v}, \\
			\widetilde{\v}(0)&=\widetilde{\v}_{0} = e^{-z(\omega)}\widetilde{\u}_{0},
		\end{aligned}
		\right.
	\end{equation}
	in $\V^*$. The following lemma shows the well-posedness result for the system \eqref{CNSE-M}.
	\begin{lemma}\label{Soln}
		Suppose that $\f\in\mathrm{L}^2_{\mathrm{loc}}(\R;\L^2(\mathcal{O}))$. For each $(\tau,\omega,\v_{\tau})\in\R\times\Omega\times\H$, the system \eqref{CNSE-M} has a unique weak solution $\v(\cdot,\tau,\omega,\v_{\tau})\in\mathrm{C}([\tau,+\infty);\H)\cap\mathrm{L}^2_{\mathrm{loc}}(\tau,+\infty;\V)$ such that $\v$ is continuous with respect to the  initial data.
	\end{lemma}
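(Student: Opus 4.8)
The plan is to treat \eqref{CNSE-M}, for each fixed $\omega\in\Omega$, as a deterministic evolution equation of globally modified Navier--Stokes type with continuous time-dependent coefficients, and to run the Faedo--Galerkin scheme exactly as in the additive case (Lemma \ref{Soln-N}) and in \cite{Caraballo+Real+Kloeden_2006,Romito_2009,Anh+Thanh+Tuyet_2023}. Since $t\mapsto z(\vartheta_t\omega)$ is continuous, the coefficients $e^{\pm z(\vartheta_t\omega)}$ and $\sigma z(\vartheta_t\omega)$ are bounded on every compact time-interval. Let $\{e_m\}$ be the eigenfunctions of $\A$, put $\H_n=\mathrm{span}\{e_1,\dots,e_n\}$ with orthogonal projection $\mathcal{P}_n$, and solve the finite-dimensional ODE for $\v^n(t)=\sum_{m=1}^n c^n_m(t)e_m$; local solvability is immediate because the right-hand side is continuous in $t$ and locally Lipschitz in $\v^n$ ($r\mapsto F_N(r)$ is Lipschitz by \eqref{FN2}), and the a priori bound below rules out finite-time blow-up, so $\v^n$ is defined on all of $[\tau,+\infty)$.

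For the a priori estimates I would take the $\H$-inner product of the projected equation with $\v^n$. The modified nonlinear term drops out since $\langle\B(\v^n),\v^n\rangle=b(\v^n,\v^n,\v^n)=0$ by \eqref{b0}, so using \eqref{poin} and Young's inequality one obtains, for a.e.\ $t$,
\begin{align*}
\frac{\d}{\d t}\|\v^n(t)\|_{\H}^2+\nu\|\v^n(t)\|_{\V}^2\le C\,|z(\vartheta_t\omega)|\,\|\v^n(t)\|_{\H}^2+C\,e^{-2z(\vartheta_t\omega)}\|\f(t)\|_{\L^2(\mathcal{O})}^2,
\end{align*}
and Gronwall's inequality then yields, for every $T>\tau$, bounds for $\v^n$ in $\mathrm{L}^\infty(\tau,T;\H)\cap\mathrm{L}^2(\tau,T;\V)$ uniform in $n$ and depending continuously on $\|\v_\tau\|_{\H}$, on $\|\f\|_{\mathrm{L}^2(\tau,T;\L^2(\mathcal{O}))}$ and on $\sup_{t\in[\tau,T]}|z(\vartheta_t\omega)|$. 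Using \eqref{FN1} in the form $F_N(e^{z(\vartheta_t\omega)}\|\v^n\|_{\V})\,e^{z(\vartheta_t\omega)}\|\v^n\|_{\V}\le N$ together with $\|\B(\v^n)\|_{\V^*}\le C\|\v^n\|_{\V}^2$ (from \eqref{HI+SE} and \eqref{poin}), the modified nonlinear term is bounded in $\mathrm{L}^2(\tau,T;\V^*)$ by $CN\|\v^n\|_{\V}$, so $\tfrac{\d\v^n}{\d t}$ is bounded in $\mathrm{L}^2(\tau,T;\V^*)$. An Aubin--Lions--Simon argument then extracts a subsequence converging to some $\v$ weakly-$\ast$ in $\mathrm{L}^\infty(\tau,T;\H)$, weakly in $\mathrm{L}^2(\tau,T;\V)$ and strongly in $\mathrm{L}^2(\tau,T;\L^2(\mathcal{O}'))$ for every bounded subdomain $\mathcal{O}'\subset\mathcal{O}$; the bound on $\tfrac{\d\v}{\d t}$ then gives $\v\in\mathrm{C}([\tau,T];\H)$ by the standard interpolation result, and since $T$ is arbitrary one gets a solution on $[\tau,+\infty)$.

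The passage to the limit in the linear terms (including $\sigma z(\vartheta_t\omega)\v^n$ and $\mathcal{P}\f e^{-z(\vartheta_t\omega)}$) is routine; the genuinely delicate point --- and the main obstacle --- is to identify the weak limit of $F_N(e^{z(\vartheta_t\omega)}\|\v^n\|_{\V})\,e^{z(\vartheta_t\omega)}\B(\v^n)$ with $F_N(e^{z(\vartheta_t\omega)}\|\v\|_{\V})\,e^{z(\vartheta_t\omega)}\B(\v)$. This I would do exactly as in \cite{Caraballo+Real+Kloeden_2006,Romito_2009} --- and, for the present unbounded-domain geometry, as in \cite{Hang+My+Nguyen_2024a,Hang+Nguyen_2024} --- by combining the uniform bounds of \eqref{FN1}, the trilinear estimate \eqref{HI+SE}, the local strong $\mathrm{L}^2$-convergence of $\v^n$, and the Lipschitz property \eqref{FN2} of $F_N$. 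Finally, for uniqueness and continuous dependence on the initial data, I would take two solutions $\v,\widehat{\v}$ with data $\v_0,\widehat{\v}_0$, set $\mathscr{Y}=\v-\widehat{\v}$, and use the decomposition \eqref{BN-diff} together with \eqref{FN1}, \eqref{FN2} and \eqref{HI+SE} (precisely as in the proof of Proposition \ref{LusinC-N}) to estimate the nonlinear difference and arrive at
\begin{align*}
\frac{\d}{\d t}\|\mathscr{Y}(t)\|_{\H}^2\le \Theta(t)\,\|\mathscr{Y}(t)\|_{\H}^2,\qquad \Theta\in\mathrm{L}^1_{\mathrm{loc}}(\tau,+\infty);
\end{align*}
Gronwall's inequality then gives both uniqueness and the continuity $\v_\tau\mapsto\v(\cdot,\tau,\omega,\v_\tau)$ in $\H$, uniformly on $[\tau,\tau+T]$. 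It is precisely the global cut-off $F_N$, through \eqref{FN1}--\eqref{FN2}, that makes this $3$D argument work, so the whole proof is the classical GMNSE argument adapted to the $\omega$-dependent coefficients $e^{\pm z(\vartheta_t\omega)}$ and $\sigma z(\vartheta_t\omega)$.
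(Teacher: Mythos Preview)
Your proposal is correct and follows exactly the route the paper indicates: the paper's own proof consists of a single sentence observing that for each fixed $\omega$ the system \eqref{CNSE-M} is deterministic and referring to \cite{Anh+Thanh+Tuyet_2023,Caraballo+Real+Kloeden_2006,Caraballo+Real+Kloeden_2010,Kinra+Mohan_UP,Romito_2009}, and your sketch is precisely a fleshed-out version of the Faedo--Galerkin/energy-estimate/uniqueness argument those references carry out. One small technical caveat: on the unbounded Poincar\'e domain $\mathcal{O}$ the Stokes operator $\A$ need not have a complete system of eigenfunctions, so the Galerkin basis should be taken as in the unbounded-domain references you already cite (e.g.\ \cite{Hang+My+Nguyen_2024a,Hang+Nguyen_2024}) rather than as eigenfunctions of $\A$; this does not affect any of your estimates.
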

	\begin{proof}
		Since the system \eqref{CNSE-M} is a deterministic system for each $\omega\in\Omega$, the proof of this lemma can be completed by following similar steps as used in the work \cite{Kinra+Mohan_UP}.  For the convenience of the readers, we provide the details of the proof.
         The proof is divided into several steps:
				\vskip 2mm
                \noindent
                \textbf{Step I:} \textit{Existence of weak solutions.} Let $T>\tau$ and let $\{\boldsymbol{w}_1,\ldots,\boldsymbol{w}_n,\ldots\}$ be an orthonormal basis in $\H$ belonging to $\V$. For any $n\geq 1$, let $\widehat{\H}_n:=\mathrm{span}\{\boldsymbol{w}_1,\ldots,\boldsymbol{w}_n\}$ and $\widehat{\mathrm{P}}_n$ denote the orthogonal projection from $\H$ onto $\widehat{\H}_n$ given by
		$$\widehat{\mathrm{P}}_n \x=\sum\limits_{i=1}^n(\x,\boldsymbol{w}_i)\boldsymbol{w}_i, \ \ \x\in\H.$$
		Let us consider the following system of ODEs in the finite dimensional space $\widehat{\H}_n$:
		\begin{equation}\label{SL1}
		\left\{
		\begin{aligned}
		\frac{\d\v^n(t)}{\d t}&=-\mathcal{G}_{N,n}(\v^n(t))+\f_n e^{-z(\vartheta_{t}\omega)} + \sigma z(\vartheta_t\omega)\v^n(t) , \\
		\v^n(\tau)&=\v_{0n},
		\end{aligned}
		\right.
		\end{equation}
		where $\mathcal{G}_{n,N}(\v^n)=\widehat{\mathrm{P}}_n\mathcal{G}_{N}(\v^n)$ and $\mathcal{G}_{N}(\v^n)=\A\v^n+F_{N}(e^{z(\vartheta_{t}\omega)}\|\v^n\|_{\V}) \cdot e^{z(\vartheta_{t}\omega)}\B\big(\v^n\big)$, $\f_n=\widehat{\mathrm{P}}_n[\mathcal{P}\f]$. We see that the system \eqref{SL1} has a unique local solution $\v^n\in\mathrm{C}([\tau, T^*_n];\H_n)$, for some $\tau<T^*_n\leq T$. The following a priori estimates show that the time $T^*_n$ can be extended to time $T$. Taking the inner product with $\v^n(\cdot)$ to the first equation of \eqref{SL1}, we obtain for a.e.	$t\in [\tau, T]$ 
		\begin{align}\label{SL2}
		&\frac{1}{2}\frac{\d}{\d t} \|\v^n(t)\|^2_{\H} +\frac{3\nu}{4}\|\v^n(t)\|^2_{\V}+\frac{\nu}{4}\|\v^n(t)\|^2_{\V}\nonumber\\&= e^{-z(\vartheta_{t}\omega)}\left(\f(t),\v^n(t)\right)+\sigma z(\vartheta_{t}\omega)\|\v^n(t)\|^2_{\H}\nonumber\\&\leq \frac{\nu\lambda}{4}\|\v^n(t)\|^2_{\H}+\frac{e^{2\left|z(\vartheta_{t}\omega)\right|}}{\nu\lambda}\|\f(t)\|^2_{\L^2(\mathcal{O})}+\sigma z(\vartheta_{t}\omega)\|\v^n(t)\|^2_{\H},
		\end{align}	
		where we have used Young's inequality. Now, by \eqref{poin} and \eqref{SL2}, we know
		\begin{align}\label{SL3}
		\frac{\d}{\d t} \|\v^n(t)\|^2_{\H}+ \left(\nu\lambda-2\sigma z(\vartheta_{t}\omega)\right)\|\v^n(t)\|^2_{\H}+\frac{\nu}{2}\|\v^n(t)\|^2_{\V} \leq Ce^{2\left|z(\vartheta_{t}\omega)\right|}\|\f(t)\|^2_{\L^2(\mathcal{O})}.
		\end{align}	
		Applying Gronwall's lemma to \eqref{SL3}, for all $t\in [\tau, T]$, we get
		\begin{align}\label{SL4}
		&\|\v^n(t)\|^2_{\H} +\frac{\nu}{2}\int^{t}_{\tau}e^{\int^{s}_{t}\left(\nu\lambda-2\sigma z(\vartheta_{r}\omega)\right)dr}\|\v^n(s)\|^2_{\V}ds \nonumber\\ 
		&\leq e^{-\int^{t}_{\tau}\left(\nu\lambda-2\sigma z(\vartheta_{s}\omega)\right)ds}\|\v^n(\tau)\|^2_{\H}+C\int^{t}_{\tau}e^{\int^{s}_{t}\left(\nu\lambda-2\sigma z(\vartheta_{r}\omega)\right)dr}e^{2\left|z(\vartheta_{t}\omega)\right|}\|\f(s)\|^2_{\L^2{\mathcal(O)}}ds.
		\end{align}	
		Since $\|\v^n(\tau)\|^2_{\H}\leq \|\v(\tau)\|^2_{\H}$, it follows from \eqref{SL4} that 
		\begin{align}\label{SL5}
		\{\v^n\}_{n\in\N} \text{ is a bounded sequence in } \mathrm{L}^{\infty}(\tau,T;\H)\cap \mathrm{L}^2(\tau,T;\V).
		\end{align}		
		For any arbitrary element $\psi\in\mathrm{L}^2(\tau,T;\V)$, using \eqref{HI+SE}, \eqref{FN1} and \eqref{poin}, we have from \eqref{SL1} that
		\begin{align*}
		&\left|\int^{T}_{\tau}\left\langle \frac{\d \v^n(t)}{\d t},\psi(t)\right\rangle \d t\right| \\
		&\leq \int^{T}_{\tau}\Big[\nu\left|(\nabla\v^n(t),\nabla\psi(t))\right|+\left|F_{N}(e^{z(\vartheta_{t}\omega)}\|\v^n\|_{\V}) \cdot e^{z(\vartheta_{t}\omega)}b\big(\v^n(t),\psi(t),\v^n(t)\big)\right|+\left|(\f_n(t),\psi(t))\right| \\
		&\quad +\left|(\sigma z(\vartheta_{t}\omega)\v^n(t),\psi(t))\right|\Big]\d t \\
		&\leq C\int^{T}_{\tau}\Big[\|\v^n(t)\|_{\V}\|\psi(t)\|_{\V}+F_{N}(e^{z(\vartheta_{t}\omega)}\|\v^n\|_{\V}) \cdot \|\v^n(t)\|^2_{\V}\|\psi(t)\|_{\V}+\|\f(t)\|_{\L^2(\mathcal{O})}\|\psi(t)\|_{\V} \\
		&\qquad +\left|z(\vartheta_{t}\omega)\right|\|\v^n\|_{\V}\|\psi(t)\|_{\V}\Big]\d t \\
		&\leq C\int^{T}_{\tau}\Big[\|\v^n(t)\|_{\V}+Ne^{-z(\vartheta_{t}\omega)}\|\v^n(t)\|_{\V}+\|\f(t)\|_{\L^2(\mathcal{O})} +\left|z(\vartheta_{t}\omega)\right|\|\v^n\|_{\V}\Big]\|\psi(t)\|_{\V}\d t \\
		&\leq C\Big[\|\v^n\|_{\L^2(\tau,T;\V)}+\|\f\|_{\L^2(\tau,T;\L^2(\mathcal{O}))}\Big]\|\psi\|_{\L^2(\tau,T;\V)},
		\end{align*}		
		which implies $\frac{\d \v^n}{\d t}\in\mathrm{L}^2(\tau,T;\V')$ and $\mathcal{G}_{n,N}(\v^n)\in\mathrm{L}^2(\tau,T;\V')$. Using \eqref{SL5} and the \textit{Banach-Alaoglu theorem}, we infer the existence of an element $\v\in\mathrm{L}^{\infty}(\tau,T;\H)\cap\mathrm{L}^2(\tau,T;\V)$ with $\frac{\d \v}{\d t}\in\mathrm{L}^2(\tau,T;\V')$, $\mathcal{G}_{0,N}\in\mathrm{L}^2(\tau,T;\V')$ and a subsequence of $\{\v^n\}$ (which is not
		relabeled) such that
		\begin{align}
		\v^n &\stackrel{w^*}{\rightharpoonup} \v && \text { in } \quad \mathrm{L}^{\infty}(\tau,T;\H), \label{SL6}\\
		\v^n &\stackrel{w}{\rightharpoonup} \v && \text { in } \quad \mathrm{L}^2(\tau,T;\V), \label{SL7}\\
		\frac{\d \v^n}{\d t} &\stackrel{w}{\rightharpoonup} \frac{\d \v}{\d t} && \text { in } \quad \mathrm{L}^2(\tau,T;\V'), \label{SL8}\\
		\mathcal{G}_{n,N}\left(\v^n\right) &\stackrel{w}{\rightharpoonup} \mathcal{G}_{0,N} && \text { in } \quad \mathrm{L}^2(\tau,T;\V'). \label{SL9}
		\end{align}
		Note that $\f_n \to \mathcal{P}\f$ in $\mathrm{L}^2(\tau,T;\H)$. Therefore, passing to the limit of \eqref{SL1} as $n \to +\infty$, the limit $\v(\cdot)$ satisfies:
		\begin{equation}\label{SL10}
		\left\{
		\begin{aligned}
		\frac{\d\v}{\d t}&=-\mathcal{G}_{0,N}+\mathcal{P}\f e^{-z(\vartheta_{t}\omega)} + \sigma z(\vartheta_t\omega)\v && \text { in } \ \mathrm{L}^2(\tau,T;\V'), \\
		\v(\tau)&=\v_{0} && \text { in } \ \H.
		\end{aligned}
		\right.
		\end{equation}
		Now, we need to prove that $\mathcal{G}_{0,N}=\mathcal{G}_{N}(\v)$. Since $\v\in\mathrm{L}^2(\tau,T;\V)$ and $\frac{\d\v}{\d t}\in\mathrm{L}^2(\tau,T;\V')$, we then deduce (cf. \cite[Ch. III, Lemma 1.2]{Temam2}) $\v\in\mathrm{C}([\tau,T];\H)$, the real-valued function $t\mapsto \|\v(t)\|^2_{\H}$ is absolutely continuous and the following	equality is satisfied: 
		\begin{align}\label{SL11}
		\frac{\d}{\d t}\|\v(t)\|^2_{\H}=2\left\langle \frac{\d \v(t)}{\d t},\v(t)\right\rangle, \ \  \text{ for a.e. } t\in [\tau,T].
		\end{align}
		Hence, we have the following equality for any $\eta>0$:
		\begin{align}\label{SL12}
		&e^{-2\eta t}\|\v(t)\|^2_{\H}+2\int^{t}_{\tau}e^{-2\eta s}\langle \mathcal{G}_{0, N}(s)-\mathcal{P}\f e^{-z(\vartheta_{s}\omega)}+(\eta-\sigma z(\vartheta_s\omega))\v(s),\v(s)\rangle \d s  =e^{-2\eta \tau}\|\v(\tau)\|^2_{\H},
		\end{align}
		for all $t\in[\tau,T]$. Similarly to \eqref{SL12}, for the system \eqref{SL1}, we obtain 
		\begin{align}\label{SL13}
		& e^{-2\eta t}\|\v^n(t)\|^2_{\H}+2\int^{t}_{\tau}e^{-2\eta s}\langle \mathcal{G}_{N}(\v^n(t))-\f_n e^{-z(\vartheta_{s}\omega)}+(\eta-\sigma z(\vartheta_s\omega))\v^n(s),\v^n(s)\rangle \d s \nonumber\\
		&=e^{-2\eta \tau}\|\v^n(\tau)\|^2_{\H},
		\end{align}	
		for all $t\in[\tau,T]$.
        
				\noindent
		\textbf{Claim I:} \textit{The operator $\mathcal{G}_{N}: \V \to \V'$ is demicontinuous.} Let us take a sequence $\u^n\to \u$ in $\V$, that is, $\|\u^n-\u\|_{\V} \to 0$ as $n\to +\infty$. For any $\psi\in\V$, we consider
		\begin{align}\label{SL14}
		& \langle \mathcal{G}_{N}(\u^n)-\mathcal{G}_{N}(\u),\psi\rangle \nonumber\\
		&=\nu\langle \A\u^n-\A\u,\psi\rangle +\langle F_{N}(e^{z(\vartheta_{t}\omega)}\|\u^n\|_{\V}) \cdot e^{z(\vartheta_{t}\omega)}\B\big(\u^n\big)-F_{N}(e^{z(\vartheta_{t}\omega)}\|\u\|_{\V}) \cdot e^{z(\vartheta_{t}\omega)}\B\big(\u\big),\psi\rangle.
		\end{align}	
		We estimate the terms on the right-hand side of \eqref{SL14} as follows:
		\begin{align}\label{SL15}
		\nu|\langle \A\u^n-\A\u,\psi\rangle|=\nu|(\nabla(\u^n-\u),\nabla\psi)| \leq \nu\|\u^n-\u\|_{\V}\|\psi\|_{\V} \to 0, \ \text{ as } n\to+\infty.
		\end{align}	
		For the second term, by \eqref{HI+SE}, \eqref{FN1}, \eqref{FN2} and \eqref{poin}, we have
		\begin{align}\label{SL16}
		&\left|\langle F_{N}(e^{z(\vartheta_{t}\omega)}\|\u^n\|_{\V}) \cdot e^{z(\vartheta_{t}\omega)}\B\big(\u^n\big)-F_{N}(e^{z(\vartheta_{t}\omega)}\|\u\|_{\V}) \cdot e^{z(\vartheta_{t}\omega)}\B\big(\u\big),\psi\rangle\right| \nonumber\\
		&=\left|F_{N}(e^{z(\vartheta_{t}\omega)}\|\u^n\|_{\V}) \cdot e^{z(\vartheta_{t}\omega)}b\big(\u^n,\u^n,\psi\big)-F_{N}(e^{z(\vartheta_{t}\omega)}\|\u\|_{\V}) \cdot e^{z(\vartheta_{t}\omega)}b\big(\u,\u,\psi\big)\right| \nonumber\\
		&\leq \left|\left[F_{N}(e^{z(\vartheta_{t}\omega)}\|\u^n\|_{\V})-F_{N}(e^{z(\vartheta_{t}\omega)}\|\u\|_{\V})\right]e^{z(\vartheta_{t}\omega)}b\big(\u^n,\psi,\u^n\big)\right| \nonumber\\
		&\quad + \left|F_{N}(e^{z(\vartheta_{t}\omega)}\|\u\|_{\V}) \cdot e^{z(\vartheta_{t}\omega)} \left[b\big(\u^n-\u,\psi,\u^n-\u\big)+b\big(\u^n-\u,\psi,\u\big)+b\big(\u,\psi,\u^n-\u\big)\right]\right| \nonumber\\
		&\leq \frac{C}{N}F_{N}(e^{z(\vartheta_{t}\omega)}\|\u^n\|_{\V})F_{N}(e^{z(\vartheta_{t}\omega)}\|\u\|_{\V}) \cdot e^{2z(\vartheta_{t}\omega)}\|\u^n-\u\|_{\V}\|\u^n\|^2_{\V}\|\psi\|_{\V} \nonumber\\
		&\quad + CF_{N}(e^{z(\vartheta_{t}\omega)}\|\u\|_{\V}) \cdot e^{z(\vartheta_{t}\omega)}\left[\|\u^n-\u\|^2_{\V}+2\|\u^n-\u\|_{\V}\|\u\|_{\V}\right]\|\psi\|_{\V}  \nonumber\\
		&\leq Ce^{z(\vartheta_{t}\omega)}\|\u^n-\u\|_{\V}\|\u^n\|_{\V}\|\psi\|_{\V} + Ce^{z(\vartheta_{t}\omega)}\left[\|\u^n-\u\|^2_{\V}+2\|\u^n-\u\|_{\V}\|\u\|_{\V}\right]\|\psi\|_{\V} \nonumber\\
		& \to 0, \ \text{ as } n\to+\infty.
		\end{align}
		Then \eqref{SL14}-\eqref{SL16} imply that $\langle \mathcal{G}_{N}(\u^n)-\mathcal{G}_{N}(\u),\psi\rangle \to 0$ as $n\to+\infty$, for all $\psi\in\V$. Hence the operator $\mathcal{G}_{N}: \V \to \V'$ is demicontinuous, which implies that $\mathcal{G}_{N}(\cdot)$ is hemicontinuous also.
		
		\noindent
		 \textbf{Claim II:} \textit{The operator $\mathcal{G}_{N}(\cdot)$ satisfies}
		\begin{align}\label{SL17}
		\langle \mathcal{G}_{N}(\v_1)-\mathcal{G}_{N}(\v_2),\v_1-\v_2\rangle + \frac{(2\widehat{C}N)^4}{\nu^3}\|\v_1-\v_2\|^2_{\H} \geq \frac{\nu}{2}\|\v_1-\v_2\|^2_{\V} \geq 0,
		\end{align}	
		\textit{for any $\v_1,\v_2\in\V$, where $\widehat{C}$ is the constant in \eqref{HI+SE}.} 
		
		 We find that 
		\begin{align}\label{SL18}
		\langle \nu\A\v_1-\nu\A\v_2,\v_1-\v_2\rangle = \nu\|\v_1-\v_2\|^2_{\V}.
		\end{align}	
		Now, using \eqref{b0}, \eqref{HI+SE}, \eqref{FN1}, \eqref{FN2} and Young's inequality, we estimate
		\begin{align}\label{SL19}
		&\left|\langle F_{N}(e^{z(\vartheta_{t}\omega)}\|\v_1\|_{\V}) \cdot e^{z(\vartheta_{t}\omega)}\B\big(\v_1\big)-F_{N}(e^{z(\vartheta_{t}\omega)}\|\v_2\|_{\V}) \cdot e^{z(\vartheta_{t}\omega)}\B\big(\v_2\big),\v_1-\v_2\rangle\right| \nonumber\\
		&\leq \left|\left[F_{N}(e^{z(\vartheta_{t}\omega)}\|\v_1\|_{\V})-F_{N}(e^{z(\vartheta_{t}\omega)}\|\v_2\|_{\V})\right]e^{z(\vartheta_{t}\omega)}b\big(\v_1,\v_2,\v_1-\v_2\big)\right| \nonumber\\
		&\quad + \left|F_{N}(e^{z(\vartheta_{t}\omega)}\|\v_2\|_{\V}) \cdot e^{z(\vartheta_{t}\omega)}b\big(\v_1-\v_2,\v_2,\v_1-\v_2\big)\right| \nonumber\\
		&\leq \frac{\widehat{C}}{N}F_{N}(e^{z(\vartheta_{t}\omega)}\|\v_1\|_{\V})F_{N}(e^{z(\vartheta_{t}\omega)}\|\v_2\|_{\V}) \cdot e^{2z(\vartheta_{t}\omega)}\|\v_1\|_{\V}\|\v_2\|_{\V}\|\v_1-\v_2\|^{\frac{3}{2}}_{\V}\|\v_1-\v_2\|^{\frac{1}{2}}_{\H} \nonumber\\
		&\quad + \widehat{C}|F_{N}(e^{z(\vartheta_{t}\omega)}\|\v_2\|_{\V}) \cdot e^{z(\vartheta_{t}\omega)}\|\v_2\|_{\V}\|\v_1-\v_2\|^{\frac{3}{2}}_{\V}\|\v_1-\v_2\|^{\frac{1}{2}}_{\H} \nonumber\\
		&\leq 2\widehat{C}N\|\v_1-\v_2\|^{\frac{3}{2}}_{\V}\|\v_1-\v_2\|^{\frac{1}{2}}_{\H} \leq \frac{\nu}{2}\|\v_1-\v_2\|^2_{\V}+\frac{(2\widehat{C}N)^4}{\nu^3}\|\v_1-\v_2\|^2_{\H},
		\end{align}	
		where $\widehat{C}$ is the constant in \eqref{HI+SE}. From \eqref{SL19}, we deduce
		\begin{align*}
		&\langle F_{N}(e^{z(\vartheta_{t}\omega)}\|\v_1\|_{\V}) \cdot e^{z(\vartheta_{t}\omega)}\B\big(\v_1\big)-F_{N}(e^{z(\vartheta_{t}\omega)}\|\v_2\|_{\V}) \cdot e^{z(\vartheta_{t}\omega)}\B\big(\v_2\big),\v_1-\v_2\rangle \nonumber\\
		&\geq -\frac{\nu}{2}\|\v_1-\v_2\|^2_{\V}-\frac{(2\widehat{C}N)^4}{\nu^3}\|\v_1-\v_2\|^2_{\H}.
		\end{align*}
		This along with \eqref{SL18} implies \eqref{SL17}.
		
		\noindent
		 \textbf{Minty-Browder technique:} Note that $\v^n(\tau)=\mathrm{P}_n\v(\tau)$, and hence the initial value $\v^n(\tau)$	converges strongly in $\H$, that is, we have
		\begin{align}\label{SL20}
		\lim\limits_{n\to+\infty}\|\v^n(\tau)-\v(\tau)\|_{\H}=0.
		\end{align}
		For any $\psi\in\mathrm{L}^{\infty}(\tau,T;\widehat{\H}_m)$ with $m<n$, by \eqref{SL17}, we obtain for $\eta=\frac{(2\widehat{C}N)^4}{\nu^3},$
		\begin{align}\label{SL21}
		\int^{T}_{\tau} e^{-2\eta t}\left[\langle \mathcal{G}_{N}(\psi(t))-\mathcal{G}_{N}(\v^n(t)), \psi(t)-\v^n(t)\rangle + \eta(\psi(t)-\v^n(t),\psi(t)-\v^n(t))\right]\d t \geq 0.
		\end{align}		
		Due to \eqref{SL13} and \eqref{SL21}, we have
		\begin{align}\label{SL22}
		&\int^{T}_{\tau} e^{-2\eta t}\langle \mathcal{G}_{N}(\psi(t))+\eta\psi(t),\psi(t)-\v^n(t)\rangle \d t \nonumber\\
		&\geq \int^{T}_{\tau} e^{-2\eta t}\langle \mathcal{G}_{N}(\v^n(t))+\eta\v^n(t),\psi(t)-\v^n(t)\rangle \d t \nonumber\\
		&= \int^{T}_{\tau} e^{-2\eta t}\langle \mathcal{G}_{N}(\v^n(t))+\eta\v^n(t),\psi(t)\rangle \d t + \frac{1}{2}\left[e^{-2\eta T}\|\v^n(T)\|^2_{\H}-e^{-2\eta\tau}\|\v^n(\tau)\|^2_{\H}\right] \nonumber\\
		&\quad -\int^{T}_{\tau} e^{-2\eta t}\langle \f_n e^{-z(\vartheta_{t}\omega)}+\sigma z(\vartheta_s\omega)\v^n(t),\v^n(t)\rangle \d t.
		\end{align}		
		Taking limit infimum on both sides of \eqref{SL22}, we deduce
		\begin{align}\label{SL23}
		&\int^{T}_{\tau} e^{-2\eta t}\langle \mathcal{G}_{N}(\psi(t))+\eta\psi(t),\psi(t)-\v(t)\rangle \d t \nonumber\\
		&\geq \int^{T}_{\tau} e^{-2\eta t}\langle \mathcal{G}_{0,N}(t)+\eta\v(t),\psi(t)\rangle \d t + \frac{1}{2}\liminf\limits_{n\to+\infty}\left[e^{-2\eta T}\|\v^n(T)\|^2_{\H}-e^{-2\eta\tau}\|\v^n(\tau)\|^2_{\H}\right] \nonumber\\
		&\quad -\int^{T}_{\tau} e^{-2\eta t}\langle \mathcal{P}\f e^{-z(\vartheta_{t}\omega)}+\sigma z(\vartheta_s\omega)\v(t),\v(t)\rangle \d t \nonumber\\
		&\geq \int^{T}_{\tau} e^{-2\eta t}\langle \mathcal{G}_{0,N}(t)+\eta\v(t),\psi(t)\rangle \d t + \frac{1}{2}\left[e^{-2\eta T}\|\v(T)\|^2_{\H}-e^{-2\eta\tau}\|\v(\tau)\|^2_{\H}\right] \nonumber\\
		&\quad -\int^{T}_{\tau} e^{-2\eta t}\langle \mathcal{P}\f e^{-z(\vartheta_{t}\omega)}+\sigma z(\vartheta_s\omega)\v(t),\v(t)\rangle \d t,
		\end{align}
		where we have used the lower semicontinuity property of the $\H$-norm and the strong convergence of the initial data \eqref{SL20} in the final inequality. Furthermore, using the equality \eqref{SL12} in \eqref{SL23}, we have
		\begin{align}\label{SL24}
		&\int^{T}_{\tau} e^{-2\eta t}\langle \mathcal{G}_{N}(\psi(t))+\eta\psi(t),\psi(t)-\v(t)\rangle \d t \nonumber\\
		&\geq \int^{T}_{\tau} e^{-2\eta t}\langle \mathcal{G}_{0,N}(t)+\eta\v(t),\psi(t)\rangle \d t - \int^{T}_{\tau} e^{-2\eta t}\langle \mathcal{G}_{0,N}(t)+\eta\v(t),\v(t)\rangle \d t \nonumber\\
		&\geq \int^{T}_{\tau} e^{-2\eta t}\langle \mathcal{G}_{0,N}(t)+\eta\v(t),\psi(t)-\v(t)\rangle \d t.
		\end{align}
		Note that the estimate \eqref{SL24} holds true for any $\psi\in\mathrm{L}^{\infty}(\tau,T;\widehat{\H}_m)$, $m\in\N$, since the inequality given in \eqref{SL24} is independent of both $m$ and $n$. Using a density argument, one can show	that the inequality \eqref{SL24} remains true for any $\psi\in\mathrm{L}^{\infty}(\tau,T;\H)\cap\mathrm{L}^{2}(\tau,T;\V)$.	
		
		 Taking $\psi=\v+\delta w$, $\delta>0$, where $w\in\mathrm{L}^{\infty}(\tau,T;\H)\cap\mathrm{L}^{2}(\tau,T;\V)$, and substituting for $\psi$ in \eqref{SL24}, we get
		\begin{align*}
		\int^{T}_{\tau} e^{-2\eta t}\langle \mathcal{G}_{N}(\v+\delta w)-\mathcal{G}_{0,N}(t)+\delta\eta w,\delta w\rangle \d t \geq 0,
		\end{align*}	
		and consequently
		\begin{align}\label{SL25}
		\int^{T}_{\tau} e^{-2\eta t}\langle \mathcal{G}_{N}(\v+\delta w)-\mathcal{G}_{0,N}(t)+\delta\eta w,w\rangle \d t \geq 0,
		\end{align}	
		Using the hemicontinuity property of the operator $\mathcal{G}_{N}(\cdot)$, and then passing $\delta\to 0$ in \eqref{SL25}, we obtain
		\begin{align}\label{SL26}
		\int^{T}_{\tau} e^{-2\eta t}\langle \mathcal{G}_{N}(\v)-\mathcal{G}_{0,N}(t),w\rangle \d t \geq 0,
		\end{align}		
		for any $w\in\mathrm{L}^{\infty}(\tau,T;\H)\cap\mathrm{L}^{2}(\tau,T;\V)$. Therefore, by \eqref{SL26} we find that $\mathcal{G}_{N}(\v(\cdot))=\mathcal{G}_{0,N}(\cdot)$, this along with \eqref{SL10} implies that $\v$ is the weak solution of the system \eqref{CNSE-M}.

		\noindent
 \textbf{Step II.} \textit{Uniqueness:} Let $\v_1$ and $\v_2$ be two weak solutions of the system \eqref{CNSE-M} with the same initial conditions. We write $\mathfrak{y} = \v_1-\v_2$, then by \textbf{Step I}, we have that $\mathfrak{y}\in \C([\tau,T]; \H) \cap \mathrm{L}^2(\tau,T;\mathbb{V})$ and satisfies
\begin{equation}\label{3.50}
\begin{cases}
\displaystyle{\frac{\d\mathfrak{y}}{\d t} }= -\nu \A\mathfrak{y} - F_N(e^{z(\vartheta_t\omega)}\|\v_1\|_{\V})\cdot e^{z(\vartheta_t\omega)}\B(\v_1) + F_N(e^{z(\vartheta_t\omega)}\|\v_2\|_{\V})\cdot e^{z(\vartheta_t\omega)}\B(\v_2),\\
\quad\quad- \sigma z(\vartheta_t\omega)(\v_1 - \v_2),\\
\mathfrak{y}(\tau) = \boldsymbol{0}, 
\end{cases}
\end{equation}
in the weak sense. 

 Taking the inner product in \eqref{3.50} by $\mathfrak{y}$ and using the relation in \eqref{BN-diff}, we arrive at
\begin{align}
\dfrac{1}{2}\dfrac{\d}{\d t}\|\mathfrak{y}\|^2_{\mathbb{H}}& = -\nu \|\mathfrak{y}\|^2_{\V}  + \sigma z(\vartheta_t\omega)\|\mathfrak{y}\|^2_{\mathbb{H}} - F_N\big(e^{z(\vartheta_t\omega)}\|\v_1\|_{\V}\big) e^{z(\vartheta_t\omega)}b(\mathfrak{y}, \v_1,\mathfrak{y})\notag\\
&\quad\, - \bigg[F_N\big(e^{z(\vartheta_t\omega)}\|\v_1\|_{\V}\big) - F_N\big(e^{z(\vartheta_t\omega)}\|\v_2\|_{\V}\big)\bigg]e^{z(\vartheta_t\omega)}b(\v_2,\v_1, \mathfrak{y}).\label{3.50a}
\end{align}
By using \eqref{FN1} and Young's inequality, we have
\begin{align}
 \bigg| F_N\big(e^{z(\vartheta_t\omega)}\|\v_1\|_{\V}\big) e^{z(\vartheta_t\omega)}b(\mathfrak{y}, \v_1,\mathfrak{y})\bigg|&\leq C F_N\big(e^{z(\vartheta_t\omega)}\|\v_1\|_{\V}\big) e^{z(\vartheta_t\omega)}\|\mathfrak{y}\|_{\V} \|\v_1\|_{\V}\|\mathfrak{y}\|^{\frac{1}{2}}_{\H}\|\mathfrak{y}\|^{\frac{1}{2}}_{\V}\notag\\ 
&\leq CN \|\mathfrak{y}\|^{\frac{1}{2}}_{\H}\|\mathfrak{y}\|^{\frac{3}{2}}_{\V}  \leq \dfrac{\nu}{2} \|\mathfrak{y}\|^{2}_{\V} + CN^4\|\mathfrak{y}\|^{2}_{\H},\label{3.52}
\end{align}
and by using ~\eqref{FN2} and Young's inequality again, we also obtain
\begin{align}
& \bigg|\bigg[F_N\big(e^{z(\vartheta_t\omega)}\|\v_1\|_{\V}\big) - F_N\big(e^{z(\vartheta_t\omega)}\|\v_2\|_{\V}\big)\bigg]e^{z(\vartheta_t\omega)}b(\v_2,\v_1, \mathfrak{y}) \bigg|\notag\\ 
&\leq \dfrac{C}{N}F_N\big(e^{z(\vartheta_t\omega)}\|\v_1\|_{\V}\big)\cdot F_N\big(e^{z(\vartheta_t\omega)}\|\v_2\|_{\V}\big) e^{2z(\vartheta_t\omega)}\|\v_1 -\v_2\|_{\V}\|\v_2\|_{\V}\|\v_1\|_{\V}\|\mathfrak{y}\|^{\frac{1}{2}}_{\H}\|\mathfrak{y}\|^{\frac{1}{2}}_{\V}\notag\\
&\leq CN \|\mathfrak{y}\|^{\frac{1}{2}}_{\H}\|\mathfrak{y}\|^{\frac{3}{2}}_{\V}  \leq \dfrac{\nu}{2} \|\mathfrak{y}\|^{2}_{\V} + CN^4\|\mathfrak{y}\|^{2}_{\H},\label{3.53}
\end{align}
		Thus, it follows from ~\eqref{3.50} and ~\eqref{3.50a}-\eqref{3.53}, we deduce that
\begin{equation*}
\dfrac{1}{2}\dfrac{\d}{\d t}\|\mathfrak{y}\|^2_{\H} \leq \left(\sigma z(\vartheta_t\omega) + 2CN^4\right)\|\mathfrak{y}\|^2_{\H},
\end{equation*}
which together with the Gronwall inequality shows that
\begin{equation}\label{4355}
\|\mathfrak{y}(t)\|^2_{\H} \leq \|\mathfrak{y}(\tau)\|^2_{\H} \cdot e^{2(\sigma z(\vartheta_t\omega) + 2CN^4)t},\ \text{ for all } t\in (\tau,+\infty).
\end{equation}
If $\mathfrak{y}(\tau)=\boldsymbol{0}$, from \eqref{4355} we have the uniqueness of solutions of the problem ~\eqref{CNSE-M} as desired. Equation \eqref{4355} also tells us that the weak solutions are continuous with respect to the  initial data.  The proof of the lemma is completed.	
\end{proof}

    The energy inequality in the following lemma will be used frequently in the rest of the paper.
	\begin{lemma}
		For $\f\in\mathrm{L}^2_{\mathrm{loc}}(\R;\L^2(\mathcal{O}))$, the solution of \eqref{CNSE-M} satisfies the following inequality:
		\begin{align}\label{EI1}
			\frac{\d}{\d t} \|\v\|^2_{\H}+ \left(\nu\lambda-2\sigma z(\vartheta_{t}\omega)\right)\|\v\|^2_{\H}+\frac{\nu}{2}\|\v\|^2_{\V} \leq \frac{2e^{2\left|z(\vartheta_{t}\omega)\right|}}{\nu\lambda}\|\f\|^2_{\L^2(\mathcal{O})}.
		\end{align}
	\end{lemma}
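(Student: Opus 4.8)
The plan is to test the first equation in \eqref{CNSE-M} with $\v$ in $\H$ and exploit the antisymmetry of the trilinear form. Since $\langle\B(\v),\v\rangle=b(\v,\v,\v)=0$ by \eqref{b0}, the entire globally modified convective term $F_{N}(e^{z(\vartheta_{t}\omega)}\|\v\|_{\V})\,e^{z(\vartheta_{t}\omega)}\langle\B(\v),\v\rangle$ vanishes, so the cut-off $F_{N}$ plays no role whatsoever in this particular estimate. This leaves the identity
\begin{align*}
\frac{1}{2}\frac{\d}{\d t}\|\v\|^2_{\H}+\nu\|\v\|^2_{\V}=e^{-z(\vartheta_{t}\omega)}(\f,\v)+\sigma z(\vartheta_{t}\omega)\|\v\|^2_{\H},
\end{align*}
valid for a.e.\ $t\geq\tau$.

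Next I would split the dissipation using the Poincar\'e inequality \eqref{poin}, writing $\nu\|\v\|^2_{\V}=\frac{\nu}{4}\|\v\|^2_{\V}+\frac{3\nu}{4}\|\v\|^2_{\V}\geq\frac{\nu}{4}\|\v\|^2_{\V}+\frac{3\nu\lambda}{4}\|\v\|^2_{\H}$, which produces the $\H$-norm dissipation needed on the left-hand side of \eqref{EI1}. The forcing term is controlled by the Cauchy--Schwarz inequality together with the elementary bound $e^{-z(\vartheta_{t}\omega)}\leq e^{|z(\vartheta_{t}\omega)|}$ and Young's inequality:
\begin{align*}
e^{-z(\vartheta_{t}\omega)}(\f,\v)\leq e^{|z(\vartheta_{t}\omega)|}\|\f\|_{\L^2(\mathcal{O})}\|\v\|_{\H}\leq\frac{\nu\lambda}{4}\|\v\|^2_{\H}+\frac{1}{\nu\lambda}e^{2|z(\vartheta_{t}\omega)|}\|\f\|^2_{\L^2(\mathcal{O})}.
\end{align*}
The remaining term $\sigma z(\vartheta_{t}\omega)\|\v\|^2_{\H}$ is simply kept on the left-hand side with its sign intact, since $z(\vartheta_{t}\omega)$ may be of either sign and no further estimate is possible or necessary.

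Combining the three displays, absorbing $\frac{\nu\lambda}{4}\|\v\|^2_{\H}$ into $\frac{3\nu\lambda}{4}\|\v\|^2_{\H}$, and finally multiplying through by $2$, one obtains precisely \eqref{EI1}. There is no genuine obstacle here; the only points requiring a little care are (i) recognising that the modifying factor $F_{N}$ is irrelevant because of the cancellation \eqref{b0}, so this step is no harder than for the unmodified Navier--Stokes equations, and (ii) arranging the split of $\nu\|\v\|^2_{\V}$ and the Young constant so that the residual coefficients $\frac{\nu}{2}$ in front of $\|\v\|^2_{\V}$ and $\nu\lambda$ in front of $\|\v\|^2_{\H}$ emerge exactly as stated in \eqref{EI1} rather than with different constants.
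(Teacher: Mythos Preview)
Your proof is correct and essentially identical to the paper's own argument: both test \eqref{CNSE-M} against $\v$, use \eqref{b0} to kill the (modified) convective term, split $\nu\|\v\|^2_{\V}$ as $\frac{\nu}{4}\|\v\|^2_{\V}+\frac{3\nu}{4}\|\v\|^2_{\V}$, apply Poincar\'e to the $\frac{3\nu}{4}$ piece, estimate the forcing with Young's inequality using the constant $\frac{\nu\lambda}{4}$, and multiply by $2$. The only cosmetic difference is that the paper writes the $\frac{3\nu}{4}$ term first and says ``apply \eqref{poin} to the second term''; otherwise the steps and constants match exactly.
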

	\begin{proof}
		From the first equation of the system \eqref{CNSE-M} and with the help of   \eqref{b0}, we obtain
		\begin{align*}
			\frac{1}{2}\frac{\d}{\d t} \|\v\|^2_{\H} +\frac{3\nu}{4}\|\v\|^2_{\V}+\frac{\nu}{4}\|\v\|^2_{\V}&= e^{-z(\vartheta_{t}\omega)}\left(\f,\v\right)+\sigma z(\vartheta_{t}\omega)\|\v\|^2_{\H}\nonumber\\&\leq \frac{\nu\lambda}{4}\|\v\|^2_{\H}+\frac{e^{2\left|z(\vartheta_{t}\omega)\right|}}{\nu\lambda}\|\f\|^2_{\L^2(\mathcal{O})}+\sigma z(\vartheta_{t}\omega)\|\v\|^2_{\H}.
		\end{align*}	
		Now, using \eqref{poin} in the second term on the left hand side of the above inequality, one can conclude the proof.
	\end{proof}

	The next result shows the Lusin continuity of the solution mapping of \eqref{CNSE-M} with respect to $\omega\in\Omega$.
	\begin{proposition}\label{LusinC}
		Suppose that $\f\in\mathrm{L}^2_{\mathrm{loc}}(\R;\L^2(\mathcal{O}))$ and Assumption \ref{assumpO} is satisfied. For each $N\in\N$, the mapping $\omega\mapsto\v(t,\tau,\omega,\v_{\tau})$ $($the solution of \eqref{CNSE-M}$)$ is continuous from $(\Omega_{M},d_{\Omega_M})$ to $\H$, uniformly in $t\in[\tau,\tau+T]$ with $T>0$.
	\end{proposition}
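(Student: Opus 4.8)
The plan is to run exactly the same argument as in the proof of Proposition~\ref{LusinC-N}, the only genuine difference being that the Ornstein--Uhlenbeck factor now sits both \emph{inside} the cut-off $F_N(e^{z(\vartheta_t\omega)}\|\cdot\|_{\V})$ and as a multiplicative prefactor $e^{z(\vartheta_t\omega)}$ of the nonlinearity (and $e^{-z(\vartheta_t\omega)}$ in the forcing). Fix $N\in\N$, take $\omega_k,\omega_0\in\Omega_M$ with $d_{\Omega_M}(\omega_k,\omega_0)\to0$, and set $\v^k(\cdot)=\v(\cdot,\tau,\omega_k,\v_\tau)$, $\v^0(\cdot)=\v(\cdot,\tau,\omega_0,\v_\tau)$, $\mathscr{Y}^k:=\v^k-\v^0$, so that $\mathscr{Y}^k(\tau)=0$. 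Abbreviating $a_k:=F_N(e^{z(\vartheta_t\omega_k)}\|\v^k\|_{\V})\,e^{z(\vartheta_t\omega_k)}$ and likewise $a_0$, from \eqref{CNSE-M} one obtains, in $\V^*$,
\[
\frac{\d\mathscr{Y}^k}{\d t}=-\nu\A\mathscr{Y}^k-a_k\B(\mathscr{Y}^k,\v^k)-(a_k-a_0)\B(\v^0,\v^k)-a_0\B(\v^0,\mathscr{Y}^k)+\sigma\big[z(\vartheta_t\omega_k)\v^k-z(\vartheta_t\omega_0)\v^0\big]+\mathcal{P}\f\big[e^{-z(\vartheta_t\omega_k)}-e^{-z(\vartheta_t\omega_0)}\big],
\]
where the splitting of the nonlinear difference is the exact analogue of \eqref{BN-diff}, obtained by adding and subtracting $a_k\B(\v^0,\v^k)$ and $a_0\B(\v^0,\v^k)$.

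Taking the $\H$-inner product with $\mathscr{Y}^k$, the term $a_0 b(\v^0,\mathscr{Y}^k,\mathscr{Y}^k)$ vanishes by \eqref{b0} and $\langle\A\mathscr{Y}^k,\mathscr{Y}^k\rangle=\|\mathscr{Y}^k\|_{\V}^2$. For $a_k b(\mathscr{Y}^k,\v^k,\mathscr{Y}^k)$, \eqref{FN1} gives $a_k\|\v^k\|_{\V}=F_N(e^{z(\vartheta_t\omega_k)}\|\v^k\|_{\V})\,e^{z(\vartheta_t\omega_k)}\|\v^k\|_{\V}\le N$, whence by \eqref{HI+SE} and Young's inequality $|a_k b(\mathscr{Y}^k,\v^k,\mathscr{Y}^k)|\le N\|\mathscr{Y}^k\|_{\H}^{1/2}\|\mathscr{Y}^k\|_{\V}^{3/2}\le\frac{\nu}{8}\|\mathscr{Y}^k\|_{\V}^2+CN^4\|\mathscr{Y}^k\|_{\H}^2$. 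For the middle term I would bound $|a_k-a_0|$ by writing it as $e^{z(\vartheta_t\omega_k)}[F_N(e^{z(\vartheta_t\omega_k)}\|\v^k\|_{\V})-F_N(e^{z(\vartheta_t\omega_0)}\|\v^0\|_{\V})]+(e^{z(\vartheta_t\omega_k)}-e^{z(\vartheta_t\omega_0)})F_N(e^{z(\vartheta_t\omega_0)}\|\v^0\|_{\V})$, applying \eqref{FN2} together with $|e^{z(\vartheta_t\omega_k)}\|\v^k\|_{\V}-e^{z(\vartheta_t\omega_0)}\|\v^0\|_{\V}|\le e^{z(\vartheta_t\omega_k)}\|\mathscr{Y}^k\|_{\V}+|e^{z(\vartheta_t\omega_k)}-e^{z(\vartheta_t\omega_0)}|\,\|\v^0\|_{\V}$, and pairing against $|b(\v^0,\v^k,\mathscr{Y}^k)|\le C\|\v^0\|_{\V}\|\v^k\|_{\V}\|\mathscr{Y}^k\|_{\H}^{1/2}\|\mathscr{Y}^k\|_{\V}^{1/2}$. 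The point of this decomposition is that each $\|\v^k\|_{\V}$ or $\|\v^0\|_{\V}$ that meets a factor $F_N(e^{z}\|\v^k\|_{\V})$ or $F_N(e^{z}\|\v^0\|_{\V})$ is absorbed by \eqref{FN1} (these products are $\le Ne^{-z(\vartheta_t\omega_k)}$, resp.\ $\le Ne^{-z(\vartheta_t\omega_0)}$), so after Young's inequality the middle term is bounded by $\frac{\nu}{8}\|\mathscr{Y}^k\|_{\V}^2$ plus $\|\mathscr{Y}^k\|_{\H}^2$ times coefficients which, using $\sup_k\sup_{t\in[\tau,\tau+T]}|z(\vartheta_t\omega_k)|\le C(\tau,T,\omega_0)$ from Lemma~\ref{conv_z}, are either uniformly bounded on $[\tau,\tau+T]$ or tend to $0$, except for a single term of the form $C|e^{z(\vartheta_t\omega_k)}-e^{z(\vartheta_t\omega_0)}|^2\|\v^k\|_{\V}^2\|\mathscr{Y}^k\|_{\H}^2$ (coming from the one unpaired $\|\v^k\|_{\V}$). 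Finally the linear and forcing terms give $\sigma z(\vartheta_t\omega_k)\|\mathscr{Y}^k\|_{\H}^2+\sigma(z(\vartheta_t\omega_k)-z(\vartheta_t\omega_0))(\v^0,\mathscr{Y}^k)+(e^{-z(\vartheta_t\omega_k)}-e^{-z(\vartheta_t\omega_0)})(\f,\mathscr{Y}^k)$, which by Young's inequality add further $\|\mathscr{Y}^k\|_{\H}^2$ terms with bounded coefficients together with $C|z(\vartheta_t\omega_k)-z(\vartheta_t\omega_0)|^2\|\v^0\|_{\H}^2+C|e^{-z(\vartheta_t\omega_k)}-e^{-z(\vartheta_t\omega_0)}|^2\|\f\|_{\L^2(\mathcal{O})}^2$.

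Collecting these bounds yields $\frac{\d}{\d t}\|\mathscr{Y}^k(t)\|_{\H}^2\le P_k(t)\|\mathscr{Y}^k(t)\|_{\H}^2+Q_k(t)$ for a.e.\ $t\in[\tau,\tau+T]$, with $Q_k(t)=C|z(\vartheta_t\omega_k)-z(\vartheta_t\omega_0)|^2\|\v^0(t)\|_{\H}^2+C|e^{-z(\vartheta_t\omega_k)}-e^{-z(\vartheta_t\omega_0)}|^2\|\f(t)\|_{\L^2(\mathcal{O})}^2$. Since $\v^0\in\mathrm{C}([\tau,\tau+T];\H)$, $\f\in\mathrm{L}^2_{\mathrm{loc}}(\R;\L^2(\mathcal{O}))$, and, by Lemma~\ref{conv_z}, $z(\vartheta_t\omega_k)\to z(\vartheta_t\omega_0)$ and $e^{z(\vartheta_t\omega_k)}\to e^{z(\vartheta_t\omega_0)}$ (hence also $e^{-z(\vartheta_t\omega_k)}\to e^{-z(\vartheta_t\omega_0)}$) uniformly on $[\tau,\tau+T]$, one gets $\int_\tau^{\tau+T}Q_k(t)\,\d t\to0$. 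In $\int_\tau^{\tau+T}P_k(t)\,\d t$ the only coefficient not immediately bounded via the uniform bound on $|z(\vartheta_t\omega_k)|$ is the one carrying $\|\v^k(t)\|_{\V}^2$; to control $\sup_k\int_\tau^{\tau+T}\|\v^k(t)\|_{\V}^2\,\d t$ I would integrate the energy inequality \eqref{EI1} over $[\tau,\tau+T]$ and again use $\sup_k\sup_{t\in[\tau,\tau+T]}|z(\vartheta_t\omega_k)|\le C(\tau,T,\omega_0)$ (Gronwall first yields $\sup_k\sup_{t\in[\tau,\tau+T]}\|\v^k(t)\|_{\H}<\infty$, and then the $\V$-integral follows from \eqref{EI1}). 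Hence $\sup_k\int_\tau^{\tau+T}P_k(t)\,\d t<\infty$, and since $\mathscr{Y}^k(\tau)=0$, Gronwall's inequality applied to the above differential inequality gives $\sup_{t\in[\tau,\tau+T]}\|\mathscr{Y}^k(t)\|_{\H}^2\le\big(\int_\tau^{\tau+T}Q_k\big)\exp\big(\sup_k\int_\tau^{\tau+T}P_k\big)\to0$ as $k\to\infty$, which is the assertion.

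The main obstacle is precisely the bookkeeping of the stochastic exponentials: one must choose the analogue of \eqref{BN-diff} so that every gradient norm produced by the trilinear estimate \eqref{HI+SE} is matched with the corresponding cut-off factor and killed through \eqref{FN1}--\eqref{FN2} (this is what forces the intermediate term $a_k\B(\v^0,\v^k)$ rather than $a_k\B(\v^0,\v^0)$), while the one leftover non-cut-off gradient term is absorbed by means of a uniform-in-$k$ a priori bound on $\int_\tau^{\tau+T}\|\v^k\|_{\V}^2$ read off from \eqref{EI1}. Beyond this, the computation is a routine repetition of the additive-noise case.
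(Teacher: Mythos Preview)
Your argument is correct and follows the same overall scheme as the paper's proof (difference equation, trilinear estimates via \eqref{HI+SE} and \eqref{FN1}--\eqref{FN2}, Gronwall), but there is one genuine difference in the decomposition of $a_k-a_0$ that is worth pointing out.

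You split $a_k-a_0=e^{z_k}[F_N(e^{z_k}\|\v^k\|_{\V})-F_N(e^{z_0}\|\v^0\|_{\V})]+(e^{z_k}-e^{z_0})F_N(e^{z_0}\|\v^0\|_{\V})$; in the second summand the cut-off $F_N(e^{z_0}\|\v^0\|_{\V})$ pairs with $\|\v^0\|_{\V}$ from $b(\v^0,\v^k,\mathscr{Y}^k)$, which indeed leaves an unpaired $\|\v^k\|_{\V}$ and forces you to invoke the uniform-in-$k$ bound $\sup_k\int_\tau^{\tau+T}\|\v^k\|_{\V}^2\,\d t<\infty$ from \eqref{EI1}. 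The paper instead writes $a_k-a_0=F_N(e^{z_k}\|\v^k\|_{\V})(e^{z_k}-e^{z_0})+[F_N(e^{z_k}\|\v^k\|_{\V})-F_N(e^{z_0}\|\v^0\|_{\V})]e^{z_0}$; now in the first summand one factors $e^{z_k}-e^{z_0}=e^{z_k}(1-e^{z_0-z_k})$ so that $F_N(e^{z_k}\|\v^k\|_{\V})e^{z_k}\|\v^k\|_{\V}\le N$ absorbs the $\|\v^k\|_{\V}$, and only $\|\v^0\|_{\V}$ survives. The paper's residual $Q_1$ therefore involves only $\|\v^0\|_{\V}^2$ (for the fixed $\omega_0$), and no uniform-in-$k$ a priori estimate is needed. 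Both routes are valid; the paper's is a little slicker, yours is a little more robust (it would work even if the nonlinearity did not carry the full cut-off).

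One small bookkeeping remark: the leftover term you describe does not naturally appear with a factor $\|\mathscr{Y}^k\|_{\H}^2$. After Young's inequality on $Ne^{-z_0}|e^{z_k}-e^{z_0}|\,\|\v^k\|_{\V}\,\|\mathscr{Y}^k\|_{\H}^{1/2}\|\mathscr{Y}^k\|_{\V}^{1/2}$ it lands in $Q_k$ as $C|e^{z_k}-e^{z_0}|^2\|\v^k\|_{\V}^2$ (and there is also a companion $C|e^{z_k}-e^{z_0}|^2\|\v^0\|_{\V}^2$ term that you omit from your stated $Q_k$). This does not affect the conclusion: $\sup_{[\tau,\tau+T]}|e^{z_k}-e^{z_0}|\to0$ together with your uniform bound on $\int\|\v^k\|_{\V}^2$ still gives $\int_\tau^{\tau+T}Q_k\to0$, and Gronwall closes the argument exactly as you say.
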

	
	\begin{proof}
		Assume that $\omega_k,\omega_0\in\Omega_M,$  $N\in\mathbb{N}$ are  such that $d_{\Omega_M}(\omega_k,\omega_0)\to0$ as $k\to+\infty$. Let $\mathscr{Y}^k(\cdot):=\v^k(\cdot)-\v^0(\cdot),$ where $\v^k(t)=\v(t,\tau,\omega_k,\v_{\tau})$ and $\v^0(t)=\v(t,\tau,\omega_0,\v_{\tau})$ for $t\in[\tau,\tau+T]$. Then, $\mathscr{Y}^k$ satisfies:
		\begin{align}\label{LC1}
			\frac{\d\mathscr{Y}^k}{\d t}
			&=-\nu \A\mathscr{Y}^k
			-F_{N}(e^{z(\vartheta_{t}\omega_k)}\|\v^k\|_{\V})\cdot e^{z(\vartheta_{t}\omega_k)}\B\big(\v^k\big)
			+F_{N}(e^{z(\vartheta_{t}\omega_0)}\|\v^0\|_{\V})\cdot e^{z(\vartheta_{t}\omega_0)}\B\big(\v^0\big)
			\nonumber\\&\quad +\f \left[e^{-z(\vartheta_{t}\omega_k)}-e^{-z(\vartheta_{t}\omega_0)}\right] + \sigma z(\vartheta_t\omega_k)\v^k-\sigma z(\vartheta_t\omega_0)\v^0
			\nonumber\\ &=-\nu \A\mathscr{Y}^k
			-F_{N}(e^{z(\vartheta_{t}\omega_k)}\|\v^k\|_{\V})\cdot e^{z(\vartheta_{t}\omega_k)}\left[\B\big(\mathscr{Y}^k,\v^k\big)+\B\big(\v^0,\mathscr{Y}^k\big)\right]
			\nonumber\\& \quad
			-F_{N}(e^{z(\vartheta_{t}\omega_k)}\|\v^k\|_{\V})\cdot \left[e^{z(\vartheta_{t}\omega_k)}-e^{z(\vartheta_{t}\omega_0)}\right]\cdot\left[\B\big(\v^0,\v^k\big)-\B\big(\v^0,\mathscr{Y}^k\big)\right]
			\nonumber\\ & \quad -\left[F_{N}(e^{z(\vartheta_{t}\omega_k)}\|\v^k\|_{\V})-F_{N}(e^{z(\vartheta_{t}\omega_0)}\|\v^0\|_{\V})\right]\cdot e^{z(\vartheta_{t}\omega_0)}\cdot\left[\B\big(\v^0,\v^k\big)-\B\big(\v^0,\mathscr{Y}^k\big)\right]
			\nonumber\\&\quad +\f \left[e^{-z(\vartheta_{t}\omega_k)}-e^{-z(\vartheta_{t}\omega_0)}\right] + \sigma z(\vartheta_t\omega_k)\v^k-\sigma z(\vartheta_t\omega_0)\v^0,
		\end{align}
		in $\V^{\ast}$ (in the weak sense), where we have also used \eqref{BN-diff} in the final equality. Taking the inner product with $\mathscr{Y}^k(\cdot)$ in \eqref{LC1}, and using \eqref{b0}, we get
		\begin{align}\label{LC2}
			\frac{1}{2}\frac{\d }{\d t}\|\mathscr{Y}^k\|^2_{\H}&=-\nu\|\mathscr{Y}^k\|^2_{\V}+\sigma z(\vartheta_t\omega_k)\|\mathscr{Y}^k\|^2_{\H}-F_{N}(e^{z(\vartheta_{t}\omega_k)}\|\v^k\|_{\V})\cdot e^{z(\vartheta_{t}\omega_k)}\cdot b\big(\mathscr{Y}^k,\v^k,\mathscr{Y}^k\big)
			\nonumber\\& \quad
			-F_{N}(e^{z(\vartheta_{t}\omega_k)}\|\v^k\|_{\V})\cdot \left[e^{z(\vartheta_{t}\omega_k)}-e^{z(\vartheta_{t}\omega_0)}\right]\cdot b\big(\v^0,\v^k,\mathscr{Y}^k\big)
			\nonumber\\ & \quad -\left[F_{N}(e^{z(\vartheta_{t}\omega_k)}\|\v^k\|_{\V})-F_{N}(e^{z(\vartheta_{t}\omega_0)}\|\v^0\|_{\V})\right]\cdot e^{z(\vartheta_{t}\omega_0)}\cdot b\big(\v^0,\v^k,\mathscr{Y}^k\big) \nonumber\\&\quad+\left[e^{-z(\vartheta_{t}\omega_k)}-e^{-z(\vartheta_{t}\omega_0)}\right](\f,\mathscr{Y}^k)+\sigma\left[z(\vartheta_t\omega_k)-z(\vartheta_t\omega_0)\right](\v^0,\mathscr{Y}^k).
		\end{align}
		Using H\"older's and Young's inequalities, and \eqref{poin}, we obtain
		\begin{align}
			\left|\left[e^{-z(\vartheta_{t}\omega_k)}-e^{-z(\vartheta_{t}\omega_0)}\right](\f,\mathscr{Y}^k)\right|& \leq C\left|e^{-z(\vartheta_{t}\omega_k)}-e^{-z(\vartheta_{t}\omega_0)}\right|^2\|\f\|^2_{\L^2(\mathcal{O})}+\frac{\nu}{10}\|\mathscr{Y}^k\|^2_{\V},\label{LC4}\\
			\left|\sigma\left[z(\vartheta_t\omega_k)-z(\vartheta_t\omega_0)\right](\v^0,\mathscr{Y}^k)\right|& \leq C\left|z(\vartheta_t\omega_k)-z(\vartheta_t\omega_0)\right|^2\|\v^0\|^2_{\H}+\frac{\nu}{10}\|\mathscr{Y}^k\|^2_{\V}.\label{LC5}
		\end{align}
		Applying \eqref{HI+SE}, \eqref{FN1}-\eqref{FN2} and Young's inequalities, we estimate
		\begin{align}
			&	\left|F_{N}(e^{z(\vartheta_{t}\omega_k)}\|\v^k\|_{\V})\cdot e^{z(\vartheta_{t}\omega_k)}\cdot b\big(\mathscr{Y}^k,\v^k,\mathscr{Y}^k\big)\right|
			\nonumber\\ &\leq C F_{N}(e^{z(\vartheta_{t}\omega_k)}\|\v^k\|_{\V})\cdot e^{z(\vartheta_{t}\omega_k)}\cdot \|\mathscr{Y}^k\|_{\V}\|\v^k\|_{\V}\|\mathscr{Y}^k\|_{\H}^{\frac12}\|\mathscr{Y}^k\|_{\V}^{\frac12}
			\nonumber\\ &\leq C N \|\mathscr{Y}^k\|_{\H}^{\frac12}\|\mathscr{Y}^k\|_{\V}^{\frac32}
			\leq  \frac{\nu}{10}\|\mathscr{Y}^k\|^2_{\V} +C N^4 \|\mathscr{Y}^k\|_{\H}^{2},\label{LC7}
		\end{align}
		\begin{align}
			&\left|F_{N}(e^{z(\vartheta_{t}\omega_k)}\|\v^k\|_{\V})\cdot \left[e^{z(\vartheta_{t}\omega_k)}-e^{z(\vartheta_{t}\omega_0)}\right]\cdot b\big(\v^0,\v^k,\mathscr{Y}^k\big)\right|
			\nonumber\\ & \leq C F_{N}(e^{z(\vartheta_{t}\omega_k)}\|\v^k\|_{\V}) \cdot \left|1-e^{z(\vartheta_{t}\omega_0)-z(\vartheta_{t}\omega_k)}\right|\cdot e^{z(\vartheta_{t}\omega_k)} \cdot \|\v^0\|_{\V}\|\v^k\|_{\V}\|\mathscr{Y}^k\|_{\H}^{\frac12}\|\mathscr{Y}^k\|_{\V}^{\frac12}
			\nonumber\\ & \leq C N \left|1-e^{z(\vartheta_{t}\omega_0)-z(\vartheta_{t}\omega_k)}\right| \|\v^0\|_{\V}\|\mathscr{Y}^k\|_{\V}
			\leq \frac{\nu}{10}\|\mathscr{Y}^k\|^2_{\V} + C N^2 \left|1-e^{z(\vartheta_{t}\omega_0)-z(\vartheta_{t}\omega_k)}\right|^2 \|\v^0\|_{\V}^2,\label{LC8}
		\end{align}
		and
		\begin{align}
			&\left|\left[F_{N}(e^{z(\vartheta_{t}\omega_k)}\|\v^k\|_{\V})-F_{N}(e^{z(\vartheta_{t}\omega_0)}\|\v^0\|_{\V})\right]\cdot e^{z(\vartheta_{t}\omega_0)}\cdot b\big(\v^0,\v^k,\mathscr{Y}^k\big)\right|
			\nonumber\\&\leq \frac{C}{N} F_{N}(e^{z(\vartheta_{t}\omega_k)}\|\v^k\|_{\V}) F_{N}(e^{z(\vartheta_{t}\omega_0)}\|\v^0\|_{\V})\|e^{z(\vartheta_{t}\omega_k)}\v^k-e^{z(\vartheta_{t}\omega_0)}\v^0\|_{\V}   e^{z(\vartheta_{t}\omega_0)}  \|\v^0\|_{\V}\|\v^k\|_{\V}\|\mathscr{Y}^k\|^{\frac12}_{\H} \|\mathscr{Y}^k\|^{\frac12}_{\V}
			\nonumber\\&\leq C N \|e^{z(\vartheta_{t}\omega_k)}\mathscr{Y}^k+\left(e^{z(\vartheta_{t}\omega_k)}-e^{z(\vartheta_{t}\omega_0)}\right)\v^0\|_{\V}   e^{-z(\vartheta_{t}\omega_k)}  \|\mathscr{Y}^k\|^{\frac12}_{\H} \|\mathscr{Y}^k\|^{\frac12}_{\V}
			\nonumber\\&\leq C N  \|\mathscr{Y}^k\|^{\frac12}_{\H} \|\mathscr{Y}^k\|^{\frac32}_{\V} +C N \left|1-e^{z(\vartheta_{t}\omega_0)-z(\vartheta_{t}\omega_k)}\right| \|\v^0\|_{\V}\|\mathscr{Y}^k\|_{\H}^{\frac12}\|\mathscr{Y}^k\|_{\V}^{\frac12}
			\nonumber\\&\leq C N  \|\mathscr{Y}^k\|^{\frac12}_{\H} \|\mathscr{Y}^k\|^{\frac32}_{\V} +C N \left|1-e^{z(\vartheta_{t}\omega_0)-z(\vartheta_{t}\omega_k)}\right| \|\v^0\|_{\V} \|\mathscr{Y}^k\|_{\V}
			\nonumber\\ & \leq  \frac{\nu}{10}\|\mathscr{Y}^k\|^2_{\V} +C N^4 \|\mathscr{Y}^k\|_{\H}^{2}+ C N^2 \left|1-e^{z(\vartheta_{t}\omega_0)-z(\vartheta_{t}\omega_k)}\right|^2 \|\v^0\|_{\V}^2. \label{LC9}
		\end{align}
		Combining \eqref{LC2}-\eqref{LC9}, we arrive at
		\begin{align}\label{LC13}
			\frac{\d }{\d t}\|\mathscr{Y}^k(t)\|^2_{\H}+\frac{\nu}{2}\|\mathscr{Y}^k(t)\|^2_{\V}\leq
			[C N^4+\sigma |z(\vartheta_t\omega_k)|] \|\mathscr{Y}^k(t)\|^2_{\H}+Q_1(t),
		\end{align}
		for a.e. $t\in[\tau,\tau+T]$, $T>0$, where
		\begin{align*}
			Q_1&=C\left|e^{-z(\vartheta_{t}\omega_k)}-e^{-z(\vartheta_{t}\omega_0)}\right|^2\|\f\|^2_{\L^2(\mathcal{O})}+C\left|z(\vartheta_t\omega_k)-z(\vartheta_t\omega_0)\right|^2\|\v^0\|^2_{\H}\nonumber\\&\quad+ C N^2 \left|1-e^{z(\vartheta_{t}\omega_0)-z(\vartheta_{t}\omega_k)}\right|^2 \|\v^0\|_{\V}^2.
		\end{align*}
		Using the fact that $\f\in\mathrm{L}^2_{\text{loc}}(\R;\L^2(\mathcal{O}))$, $\v^0\in\mathrm{C}([\tau,+\infty);\H)\cap\mathrm{L}^2_{\mathrm{loc}}(\tau,+\infty;\V)$ and Lemma \ref{conv_z}, we deduce that
		\begin{align}\label{LC16}
			\lim_{k\to+\infty}\int_{\tau}^{\tau+T}Q_1(r)\d r=0.
		\end{align}
		Applying Gronwall's inequality in \eqref{LC13} and using \eqref{conv_z2}, we obtain  for all $t\in[\tau,\tau+T]$,
		\begin{align}\label{LC17}
			\|\mathscr{Y}^k(t)\|^2_{\H}\leq e^{C N^4 T + \sigma C(\tau,T,\omega_0)}\left[\int_{\tau}^{\tau+T}Q_1(r)\d r\right].
		\end{align}
		In view of \eqref{LC16} and \eqref{LC17}, we have for all $t\in[\tau,\tau+T]$,
		\begin{align*}
			\|\mathscr{Y}^k(t)\|^2_{\H}\to0 \ \text{ as } \ k \to +\infty,
		\end{align*}
		which completes the proof.
	\end{proof}

	In view of Lemma \ref{Soln}, we can define a mapping $\Psi:\R^+\times\R\times\Omega\times\H\to\H$ by
	\begin{align}\label{Phi}
		\Psi(t,\tau,\omega,\u_{\tau})=\u(t+\tau,\tau,\vartheta_{-\tau}\omega,\u_{\tau})=e^{z(\vartheta_{t}\omega)}\v(t+\tau,\tau,\vartheta_{-\tau}\omega,\v_{\tau}).
	\end{align}
	The Lusin continuity in Proposition \ref{LusinC} provides the $\mathscr{F}$-measurability of $\Psi$. Consequently, the mapping $\Psi$ defined by \eqref{Phi} is a NRDS on $\H$. The following proposition demonstrates the backward convergence of NRDS \eqref{Phi}.
	\begin{proposition}\label{Back_conver}
		Suppose that Assumptions \ref{assumpO} and \ref{Hypo_f-N} are satisfied. Then, the solution $\v$ of the system \eqref{CNSE-M} backward converges to the solution $\widetilde{\v}$ of the system \eqref{A-CNSE-M}, that is,
		\begin{equation}
			\lim_{\tau\to -\infty}\|\v(T+\tau,\tau,\vartheta_{-\tau}\omega,\v_{\tau})-\widetilde{\v}(t,\omega,\widetilde{\v}_0)\|_{\H}=0, \ \ \text{ for all } \ T>0 \ \text{ and } \ \omega\in\Omega,
		\end{equation}
		whenever $\|\v_{\tau}-\widetilde{\v}_0\|_{\H}\to0$ as $\tau\to-\infty.$
	\end{proposition}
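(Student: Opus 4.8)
The plan is to follow closely the argument of Proposition~\ref{Back_conver-N}, the only genuinely new feature being the exponential weights $e^{\pm z(\vartheta_{t}\omega)}$ produced by the de-randomizing substitution and the fact that the cut-off in \eqref{CNSE-M} and \eqref{A-CNSE-M} is now $F_{N}(e^{z(\vartheta_{t}\omega)}\|\cdot\|_{\V})$. First I would set $\mathscr{Y}^{\tau}(t):=\v(t+\tau,\tau,\vartheta_{-\tau}\omega,\v_{\tau})-\widetilde{\v}(t,\omega,\widetilde{\v}_{0})$ for $t\geq0$, abbreviate $\v:=\v(t+\tau,\tau,\vartheta_{-\tau}\omega,\v_{\tau})$ and $z:=z(\vartheta_{t}\omega)$ (note $z(\vartheta_{t+\tau}\vartheta_{-\tau}\omega)=z(\vartheta_{t}\omega)$), and subtract \eqref{A-CNSE-M} from \eqref{CNSE-M} written at time $t+\tau$ to get, in $\V^{\ast}$,
\[
\frac{\d\mathscr{Y}^{\tau}}{\d t}=-\nu\A\mathscr{Y}^{\tau}-e^{z}\big[F_{N}(e^{z}\|\v\|_{\V})\B(\v)-F_{N}(e^{z}\|\widetilde{\v}\|_{\V})\B(\widetilde{\v})\big]+\sigma z\,\mathscr{Y}^{\tau}+e^{-z}\mathcal{P}\big[\f(t+\tau)-\f_{\infty}\big],
\]
where the regularity needed to justify the computation is provided by Lemma~\ref{Soln} and its autonomous analogue. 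Applying the analogue of \eqref{BN-diff} to the map $\w\mapsto F_{N}(e^{z}\|\w\|_{\V})\B(\w)$, the bracketed difference equals $F_{N}(e^{z}\|\v\|_{\V})\B(\mathscr{Y}^{\tau},\v)+\big[F_{N}(e^{z}\|\v\|_{\V})-F_{N}(e^{z}\|\widetilde{\v}\|_{\V})\big]\B(\widetilde{\v},\v)+F_{N}(e^{z}\|\widetilde{\v}\|_{\V})\B(\widetilde{\v},\mathscr{Y}^{\tau})$.

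Next I would take the inner product with $\mathscr{Y}^{\tau}$ in $\H$. By \eqref{b0} the contribution $e^{z}F_{N}(e^{z}\|\widetilde{\v}\|_{\V})\,b(\widetilde{\v},\mathscr{Y}^{\tau},\mathscr{Y}^{\tau})$ vanishes. For the remaining two nonlinear terms the key observation is the weighted bound $e^{z}\|\w\|_{\V}F_{N}(e^{z}\|\w\|_{\V})\leq N$ for all $\w\in\V$, which is exactly \eqref{FN1} applied to $e^{z}\w$ (valid since $e^{z}>0$ and $\|e^{z}\w\|_{\V}=e^{z}\|\w\|_{\V}$). Combined with \eqref{HI+SE} this bounds $|e^{z}F_{N}(e^{z}\|\v\|_{\V})\,b(\mathscr{Y}^{\tau},\v,\mathscr{Y}^{\tau})|$ by $CN\|\mathscr{Y}^{\tau}\|_{\H}^{1/2}\|\mathscr{Y}^{\tau}\|_{\V}^{3/2}\leq\frac{\nu}{6}\|\mathscr{Y}^{\tau}\|_{\V}^{2}+CN^{4}\|\mathscr{Y}^{\tau}\|_{\H}^{2}$, while for the middle term \eqref{FN2} applied to $e^{z}\v$ and $e^{z}\widetilde{\v}$ supplies the factor $\frac{e^{z}}{N}F_{N}(e^{z}\|\v\|_{\V})F_{N}(e^{z}\|\widetilde{\v}\|_{\V})\|\mathscr{Y}^{\tau}\|_{\V}$, so that after pairing each surviving $e^{z}F_{N}$ with its $\V$-norm through the same weighted bound one again obtains $CN\|\mathscr{Y}^{\tau}\|_{\H}^{1/2}\|\mathscr{Y}^{\tau}\|_{\V}^{3/2}\leq\frac{\nu}{6}\|\mathscr{Y}^{\tau}\|_{\V}^{2}+CN^{4}\|\mathscr{Y}^{\tau}\|_{\H}^{2}$. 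The term $\sigma z\|\mathscr{Y}^{\tau}\|_{\H}^{2}$ is kept (sign-indefinite but, as noted below, bounded in $t$ on $[0,T]$), and the forcing term is controlled by H\"older's and Young's inequalities together with \eqref{poin}: $e^{-z}|(\f(t+\tau)-\f_{\infty},\mathscr{Y}^{\tau})|\leq Ce^{2|z|}\|\f(t+\tau)-\f_{\infty}\|_{\L^{2}(\mathcal{O})}^{2}+\frac{\nu}{6}\|\mathscr{Y}^{\tau}\|_{\V}^{2}$. Collecting everything yields
\[
\frac{\d}{\d t}\|\mathscr{Y}^{\tau}\|_{\H}^{2}+\nu\|\mathscr{Y}^{\tau}\|_{\V}^{2}\leq\big[CN^{4}+2\sigma|z(\vartheta_{t}\omega)|\big]\|\mathscr{Y}^{\tau}\|_{\H}^{2}+Ce^{2|z(\vartheta_{t}\omega)|}\|\f(t+\tau)-\f_{\infty}\|_{\L^{2}(\mathcal{O})}^{2}.
\]

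Finally I would apply Gronwall's inequality on $(0,T)$. Since $\omega$ lies in the $\vartheta$-invariant full-measure set on which $t\mapsto z(\vartheta_{t}\omega)$ is continuous, the quantity $\sup_{t\in[0,T]}|z(\vartheta_{t}\omega)|$ is finite, hence both $\int_{0}^{T}\big[CN^{4}+2\sigma|z(\vartheta_{t}\omega)|\big]\d t$ and $\sup_{t\in[0,T]}e^{2|z(\vartheta_{t}\omega)|}$ are finite, and Gronwall gives
\[
\|\mathscr{Y}^{\tau}(T)\|_{\H}^{2}\leq\Big[\|\v_{\tau}-\widetilde{\v}_{0}\|_{\H}^{2}+C(N,\omega,T)\int_{0}^{T}\|\f(t+\tau)-\f_{\infty}\|_{\L^{2}(\mathcal{O})}^{2}\d t\Big]e^{C(N,\omega,T)}.
\]
The first term in the bracket tends to $0$ as $\tau\to-\infty$ by hypothesis and the integral tends to $0$ by \eqref{BC8-A}, which proves the assertion for every $T>0$ and $\omega\in\Omega$. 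I expect the only real point of care to be precisely this bookkeeping with the exponential weights: one must verify that each factor $e^{z(\vartheta_{t}\omega)}$ coming from the change of variable is matched with a $\V$-norm via $e^{z}\|\w\|_{\V}F_{N}(e^{z}\|\w\|_{\V})\leq N$, so that the weights cancel rather than accumulate; once this is arranged, the dissipation $\nu\A$ and Poincar\'e's inequality \eqref{poin} absorb every remaining gradient term exactly as in the additive case, and the continuity of $z(\vartheta_{\cdot}\omega)$ keeps the Gronwall constants finite on $[0,T]$.
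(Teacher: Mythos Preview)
Your proposal is correct and follows essentially the same route as the paper's own proof: the same difference $\mathscr{Y}^{\tau}$, the same bilinear splitting via \eqref{BN-diff}, the key observation $e^{z}\|\w\|_{\V}F_{N}(e^{z}\|\w\|_{\V})\leq N$ to neutralize the exponential weights, and Gronwall on $[0,T]$ using the continuity of $z(\vartheta_{\cdot}\omega)$ together with \eqref{BC8-A}. The only cosmetic difference is that the paper places the weight $e^{-2z}$ on the $\|\mathscr{Y}^{\tau}\|_{\H}^{2}$ side in the forcing estimate (so its Gronwall coefficient is $S_{1}=e^{-2z}+|z|$) whereas you place $e^{2|z|}$ on the forcing side; both are harmless since $\sup_{[0,T]}|z(\vartheta_{t}\omega)|<\infty$.
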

	
	\begin{proof}
		Let $\mathscr{Y}^{\tau}(t):=\v(t+\tau,\tau,\vartheta_{-\tau}\omega,\v_{\tau})-\widetilde{\v}(t,\omega,\widetilde{\v}_0)$ for $t\geq0$. From \eqref{CNSE-M} and \eqref{A-CNSE-M}, we obtain
		\begin{align}\label{BC1}
			\frac{\d\mathscr{Y}^{\tau}}{\d t} &=-\nu \A\mathscr{Y}^{\tau}-e^{z(\vartheta_{t}\omega)}\left[F_{N}(e^{z(\vartheta_{t}\omega)}\|\v\|_{\V}) \cdot \B\big(\v\big)- F_{N}(e^{z(\vartheta_{t}\omega)}\|\wi \v\|_{\V}) \cdot \B\big(\widetilde{\v}\big)\right] \nonumber\\ & \quad +e^{-z(\vartheta_{t}\omega)}\left[\mathcal{P}\f(t+\tau)-\mathcal{P}\f_{\infty}\right] + \sigma z(\vartheta_t\omega)\mathscr{Y}^{\tau}
			\nonumber\\ &=-\nu \A\mathscr{Y}^{\tau} - e^{z(\vartheta_{t}\omega)} F_{N}(e^{z(\vartheta_{t}\omega)}\|\v\|_{\V})\cdot \left[ \B\big(\wi\v,\mathscr{Y}^{\tau}\big)  + \B\big(\mathscr{Y}^{\tau}, \v\big)\right] 
			\nonumber\\ & \quad - e^{z(\vartheta_{t}\omega)} \left[ F_{N}(e^{z(\vartheta_{t}\omega)}\| \v\|_{\V}) - F_{N}(e^{z(\vartheta_{t}\omega)}\|\wi \v\|_{\V})  \right]\cdot \left[ \B\big(\widetilde{\v},\v\big)-\B\big(\widetilde{\v},\mathscr{Y}^{\tau}\big)\right] \nonumber\\ & \quad +e^{-z(\vartheta_{t}\omega)}\left[\mathcal{P}\f(t+\tau)-\mathcal{P}\f_{\infty}\right] + \sigma z(\vartheta_t\omega)\mathscr{Y}^{\tau},
		\end{align}
		in $\V^{\ast}$ (in the weak sense).  Taking the inner product with $\mathscr{Y}^{\tau}(\cdot)$ in \eqref{BC1}, and using \eqref{b0}, we get
		\begin{align}\label{BC2}
			\frac{1}{2}\frac{\d }{\d t}\|\mathscr{Y}^{\tau}\|^2_{\H}+\nu\|\mathscr{Y}^{\tau}\|^2_{\V}&=\sigma z(\vartheta_t\omega)\|\mathscr{Y}^{\tau}\|^2_{\H} - e^{z(\vartheta_{t}\omega)} F_{N}(e^{z(\vartheta_{t}\omega)}\|\v\|_{\V})\cdot  b\big(\mathscr{Y}^{\tau}, \v, \mathscr{Y}^{\tau}\big) 
			\nonumber\\ & \quad - e^{z(\vartheta_{t}\omega)} \left[ F_{N}(e^{z(\vartheta_{t}\omega)}\| \v\|_{\V}) - F_{N}(e^{z(\vartheta_{t}\omega)}\|\wi \v\|_{\V})  \right] \cdot b\big(\widetilde{\v},\v, \mathscr{Y}^{\tau}\big) \nonumber\\&\quad+e^{-z(\vartheta_{t}\omega)}(\f(t+\tau)-\f_{\infty},\mathscr{Y}^{\tau}).
		\end{align}
		In view of \eqref{HI+SE}, \eqref{FN1}-\eqref{FN2} and Young's inequalities, we find
		\begin{align}
			& \left|e^{z(\vartheta_{t}\omega)} F_{N}(e^{z(\vartheta_{t}\omega)}\|\v\|_{\V}) \cdot b\big(\mathscr{Y}^{\tau}, \v, \mathscr{Y}^{\tau}\big) \right|
			\nonumber\\	& \leq C e^{z(\vartheta_{t}\omega)} F_{N}(e^{z(\vartheta_{t}\omega)}\|\v\|_{\V}) \cdot \|\mathscr{Y}^{\tau}\|_{\V} \|\v\|_{\V} \|\mathscr{Y}^{\tau}\|_{\H}^{\frac12}  \|\mathscr{Y}^{\tau}\|_{\V}^{\frac12},
			\nonumber\\ & \leq C N   \|\mathscr{Y}^{\tau}\|_{\H}^{\frac12}  \|\mathscr{Y}^{\tau}\|_{\V}^{\frac32}
			\leq \frac{\nu}{4}\|\mathscr{Y}^{\tau}\|^2_{\V} + C N^4  \|\mathscr{Y}^{\tau}\|_{\H}^{2}, \label{BC5}\
		\end{align}
		and
		\begin{align}
			&\left|e^{z(\vartheta_{t}\omega)} \left[ F_{N}(e^{z(\vartheta_{t}\omega)}\| \v\|_{\V}) - F_{N}(e^{z(\vartheta_{t}\omega)}\|\wi \v\|_{\V})  \right] \cdot b\big(\widetilde{\v},\v, \mathscr{Y}^{\tau}\big)\right|
			\nonumber\\ & \leq e^{z(\vartheta_{t}\omega)} \cdot \frac{C}{N}F_{N}(e^{z(\vartheta_{t}\omega)}\| \v\|_{\V})  F_{N}(e^{z(\vartheta_{t}\omega)}\|\wi \v\|_{\V})\cdot e^{z(\vartheta_{t}\omega)}\|\mathscr{Y}^{\tau}\|_{\V} \|\widetilde{\v}\|_{\V}\|\v\|_{\V}  \|\mathscr{Y}^{\tau}\|_{\H}^{\frac{1}{2}} \|\mathscr{Y}^{\tau}\|_{\V}^{\frac{1}{2}}
			\nonumber\\ & \leq CN   \|\mathscr{Y}^{\tau}\|_{\H}^{\frac{1}{2}} \|\mathscr{Y}^{\tau}\|_{\V}^{\frac{3}{2}}
			\leq \frac{\nu}{4}\|\mathscr{Y}^{\tau}\|^2_{\V} + C N^4  \|\mathscr{Y}^{\tau}\|_{\H}^{2}. \label{BC51}
		\end{align}
		Applying H\"older's and Young's inequalities, we obtain
		\begin{align}
			\left|e^{-z(\vartheta_{t}\omega)}(\f(t+\tau)-\f_{\infty},\mathscr{Y}^{\tau})\right|& \leq\|\f(t+\tau)-\f_{\infty}\|^2_{\L^2(\mathcal{O})}+Ce^{-2z(\vartheta_{t}\omega)}\|\mathscr{Y}^{\tau}\|^2_{\H}.\label{BC4}
		\end{align}
		Combining \eqref{BC2}-\eqref{BC5}, we arrive at
		\begin{align}\label{BC6}
			\frac{\d }{\d t}\|\mathscr{Y}^{\tau}\|^2_{\H}+\frac{\nu}{2}\|\mathscr{Y}^{\tau}\|^2_{\V}\leq C\big[
			S_1(t)\|\mathscr{Y}^{\tau}\|^2_{\H}+\|\f(t+\tau)-\f_{\infty}\|^2_{\L^2(\mathcal{O})}\big],
		\end{align}
		where $S_1= e^{-2z(\vartheta_{t}\omega)}+\left|z(\vartheta_{t}\omega)\right|.$  Applying Gronwall's inequality to \eqref{BC6} over $(0,T)$, we reach at
		\begin{align}\label{BC7}
			\|\mathscr{Y}^{\tau}(T)\|^2_{\H}\leq \left[\|\mathscr{Y}^{\tau}(0)\|^2_{\H}+C\int_{0}^{T}\|\f(t+\tau)-\f_{\infty}\|^2_{\L^2(\mathcal{O})} \d t\right]e^{C\int_{0}^{T}S_1(t)\d t}.
		\end{align}
		The continuity of $z$  implies that
		\begin{align}\label{BC8}
			\int_{0}^{T}S_{1}(t)\d t<+\infty.
		\end{align}
		Using the fact that $\int_{0}^{T}S_{1}(t)\d t$ is bounded, \eqref{BC8-A} and $\|\mathscr{Y}^{\tau}(0)\|^2_{\H}=\|\v_{\tau}-\widetilde{\v}_0\|_{\H}\to0$ as $\tau\to-\infty$, and it completes the proof.
	\end{proof}

	The next lemma is needed to obtain the increasing random absorbing set and the inequality \eqref{AB3} (see below), which  is  used to prove the backward uniform tail-estimates (Lemma \ref{largeradius}) and the backward flattening estimates (Lemma \ref{Flattening}).
	\begin{lemma}\label{Absorbing}
		Suppose that $\f\in\mathrm{L}^2_{\mathrm{loc}}(\R;\L^2(\mathcal{O}))$ and Assumption \ref{assumpO} is satisfied. Then, for each $(\tau,\omega,D)\in\R\times\Omega\times\mathfrak{D},$ there exists a time $\mathcal{T}:=\mathcal{T}(\tau,\omega,D)>0$ such that
		\begin{align}\label{AB1}
			&\sup_{s\leq \tau}\sup_{t\geq \mathcal{T}}\sup_{\v_{0}\in D(s-t,\vartheta_{-t}\omega)}\bigg[\|\v(s,s-t,\vartheta_{-s}\omega,\v_{0})\|^2_{\H}\nonumber\\&\quad+\frac{\nu}{2}\int_{s-t}^{s}e^{\nu\lambda(\zeta-s)-2\sigma\int^{\zeta}_{s}z(\vartheta_{\upeta-s}\omega)\d\upeta}\|\v(\zeta,s-t,\vartheta_{-s}\omega,\v_{0})\|^2_{\V}\d\zeta\bigg]\nonumber\\&\leq 1+\frac{2}{\nu\lambda}\sup_{s\leq \tau}K(s,\omega),
		\end{align}
		where $K(s,\omega)$ is given by
		\begin{align}\label{AB2}
			K(s,\omega):=\int_{-\infty}^{0}e^{\nu\lambda\zeta+2|z(\vartheta_{\zeta}\omega)|+2\sigma\int_{\zeta}^{0}z(\vartheta_{\upeta}\omega)\d\upeta}\|\f(\zeta+s)\|^2_{\L^2(\mathcal{O})}\d\zeta.
		\end{align}
		Furthermore, for all $\xi> s-t,$ $ t\geq0$ and $\v_{0}\in\H$,
		\begin{align}\label{AB3}
			&\|\v(\xi,s-t,\vartheta_{-s}\omega,\v_{0})\|^2_{\H}+\frac{\nu}{2}\int_{s-t}^{\xi}e^{\nu\lambda(\zeta-\xi)-2\sigma\int^{\zeta}_{\xi}z(\vartheta_{\upeta-s}\omega)\d\upeta}\|\v(\zeta,s-t,\vartheta_{-s}\omega,\v_{0})\|^2_{\V}\d\zeta\nonumber\\&\leq e^{-\nu\lambda(\xi-s+t)+2\sigma\int_{-t}^{\xi-s}z(\vartheta_{\upeta}\omega)\d\upeta}\|\v_{0}\|^2_{\H} \nonumber\\&\quad+ \frac{2}{\nu\lambda}\int\limits_{-t}^{\xi-s}e^{\nu\lambda(\zeta+s-\xi)+2|z(\vartheta_{\zeta}\omega)|+2\sigma\int_{\zeta}^{\xi-s}z(\vartheta_{\upeta}\omega)\d\upeta}\|\f(\zeta+s)\|^2_{\L^2(\mathcal{O})}\d\zeta.
		\end{align}
	\end{lemma}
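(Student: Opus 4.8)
The plan is to obtain both \eqref{AB1} and \eqref{AB3} from the energy inequality \eqref{EI1} via a variation-of-constants argument with a random integrating factor, and then to absorb all the resulting exponential prefactors using the sublinear growth properties \eqref{Z3} of the Ornstein--Uhlenbeck process, the uniformness condition \eqref{G3} on $\f$, and the backward temperedness of $D$.

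First I would rewrite \eqref{EI1} along the trajectory $\zeta\mapsto\v(\zeta)=\v(\zeta,s-t,\vartheta_{-s}\omega,\v_0)$, using $\vartheta_\zeta\vartheta_{-s}\omega=\vartheta_{\zeta-s}\omega$, so that for a.e.\ $\zeta>s-t$,
\[
\frac{\d}{\d\zeta}\|\v(\zeta)\|^2_{\H}+\big(\nu\lambda-2\sigma z(\vartheta_{\zeta-s}\omega)\big)\|\v(\zeta)\|^2_{\H}+\frac{\nu}{2}\|\v(\zeta)\|^2_{\V}\le\frac{2e^{2|z(\vartheta_{\zeta-s}\omega)|}}{\nu\lambda}\|\f(\zeta)\|^2_{\L^2(\mathcal{O})}.
\]
Multiplying by the integrating factor $\exp\!\big(\nu\lambda\zeta-2\sigma\int^{\zeta}z(\vartheta_{r-s}\omega)\d r\big)$, integrating over $(s-t,\xi)$, dividing by this factor evaluated at $\xi$, and substituting $\zeta\mapsto\zeta+s$ in the integrals produces \eqref{AB3} with no further estimates required: it is an identity-level consequence of \eqref{EI1}.

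Next I would set $\xi=s$ in \eqref{AB3} and enlarge the forcing integral from $(-t,0)$ to $(-\infty,0)$, which gives
\[
\|\v(s,s-t,\vartheta_{-s}\omega,\v_0)\|^2_{\H}+\frac{\nu}{2}\int_{s-t}^{s}e^{\nu\lambda(\zeta-s)-2\sigma\int^{\zeta}_{s}z(\vartheta_{\upeta-s}\omega)\d\upeta}\|\v(\zeta)\|^2_{\V}\d\zeta\le e^{-\nu\lambda t+2\sigma\int_{-t}^{0}z(\vartheta_\upeta\omega)\d\upeta}\|\v_0\|^2_{\H}+\frac{2}{\nu\lambda}K(s,\omega),
\]
with $K$ as in \eqref{AB2}. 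There remain two checks. For $\sup_{s\le\tau}K(s,\omega)<\infty$: by \eqref{Z3}, for fixed $\omega$ one has $2|z(\vartheta_\zeta\omega)|+2\sigma\int_\zeta^0 z(\vartheta_\upeta\omega)\d\upeta\le\tfrac{\nu\lambda}{2}|\zeta|+C(\omega)$ for all $\zeta\le0$, so $K(s,\omega)\le e^{C(\omega)}\int_{-\infty}^{0}e^{\frac{\nu\lambda}{2}\zeta}\|\f(\zeta+s)\|^2_{\L^2(\mathcal{O})}\d\zeta$, and the supremum over $s\le\tau$ of the last integral is finite by \eqref{G3} (with $\kappa=\tfrac{\nu\lambda}{2}$). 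For the initial-data term: again by \eqref{Z3}, $2\sigma\int_{-t}^{0}z(\vartheta_\upeta\omega)\d\upeta=o(t)$ as $t\to+\infty$, so there is $t_0(\omega)$ with $e^{-\nu\lambda t+2\sigma\int_{-t}^{0}z(\vartheta_\upeta\omega)\d\upeta}\le e^{-\frac{\nu\lambda}{2}t}$ for $t\ge t_0(\omega)$; since $\v_0\in D(s-t,\vartheta_{-t}\omega)$ and $D\in\mathfrak{D}$ is backward-uniformly tempered with an arbitrary rate (see \eqref{BackTem}, in particular with rate $\tfrac{\nu\lambda}{2}$), we get $e^{-\frac{\nu\lambda}{2}t}\sup_{s\le\tau}\|D(s-t,\vartheta_{-t}\omega)\|^2_{\H}\to0$, hence there is $\mathcal{T}=\mathcal{T}(\tau,\omega,D)\ge t_0(\omega)$ for which the initial-data term is $\le1$ for all $t\ge\mathcal{T}$. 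Taking the supremum over $s\le\tau$, $t\ge\mathcal{T}$, and $\v_0\in D(s-t,\vartheta_{-t}\omega)$ then yields \eqref{AB1}.

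The hard part is the simultaneous, $s$-uniform balancing of the three competing exponential rates in the initial-data term — the dissipative decay $e^{-\nu\lambda t}$, the random subexponential growth $e^{2\sigma\int_{-t}^{0}z(\vartheta_\upeta\omega)\d\upeta}$, and the temperedness rate of $D$ — together with the analogous balancing inside $K(s,\omega)$. This is exactly what \eqref{Z3} (sublinearity of $t\mapsto\int_0^t z(\vartheta_\upeta\omega)\d\upeta$), the requirement that temperedness in $\mathfrak{D}$ hold with an \emph{arbitrary} exponential rate, and \eqref{G3} are built to handle; once these are invoked, everything else is the routine Gronwall/variation-of-constants computation of the first step.
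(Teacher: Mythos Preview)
Your proof is correct and follows essentially the same route as the paper: rewrite \eqref{EI1} along the trajectory, apply the variation-of-constants formula with the random integrating factor to obtain \eqref{AB3}, set $\xi=s$, and then combine the sublinear growth \eqref{Z3} with backward temperedness \eqref{BackTem} to absorb the initial-data term (the paper uses the decay rate $\tfrac{\nu\lambda}{3}$ where you use $\tfrac{\nu\lambda}{2}$, an immaterial difference). Your additional verification that $\sup_{s\le\tau}K(s,\omega)<\infty$ is deferred in the paper to the subsequent Proposition \ref{IRAS} (see \eqref{IRAS2}), but including it here is harmless and arguably more self-contained.
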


	\begin{proof}
		Let us rewrite the energy inequality \eqref{EI1} for $\v(\zeta)=\v(\zeta,s-t,\vartheta_{-s}\omega,\v_{0})$, that is,
		\begin{align}\label{AB0}
			\frac{\d}{\d\zeta} \|\v(\zeta)\|^2_{\H}&+ \left(\nu\lambda-2\sigma z(\vartheta_{\zeta-s}\omega)\right)\|\v(\zeta)\|^2_{\H}+\frac{\nu}{2}\|\v(\zeta)\|^2_{\V}\leq \frac{2e^{2|z(\vartheta_{\zeta-s}\omega)|}}{\nu\lambda}\|\f(\zeta)\|^2_{\L^2(\mathcal{O})},
		\end{align}
		for a.e. $\zeta\geq s-t$. In view of variation of constants formula with respect to $\zeta\in(s-t,\xi)$, we get \eqref{AB3}. Putting $\xi=s$ in \eqref{AB3}, we find
		\begin{align}\label{AB4}
			& \|\v(s,s-t,\vartheta_{-s}\omega,\v_{0})\|^2_{\H}+\frac{\nu}{2}\int_{s-t}^{s}e^{\nu\lambda(\uprho-s)-2\sigma\int^{\uprho}_{s}z(\vartheta_{\upeta-s}\omega)\d\upeta}\|\v(\uprho,s-t,\vartheta_{-s}\omega,\v_{0})\|^2_{\V}\d\uprho\nonumber\\&\leq e^{-\nu\lambda t+2\sigma\int_{-t}^{0}z(\vartheta_{\upeta}\omega)\d\upeta}\|\v_{0}\|^2_{\H}  +\frac{2}{\nu\lambda}\int_{-t}^{0}e^{\nu\lambda\uprho+2|z(\vartheta_{\uprho}\omega)|+2\sigma\int_{\uprho}^{0}z(\vartheta_{\upeta}\omega)\d\upeta}\|\f(\uprho+s)\|^2_{\L^2(\mathcal{O})}\d\uprho\nonumber\\&\leq e^{-\nu\lambda t+2\sigma\int_{-t}^{0}z(\vartheta_{\upeta}\omega)\d\upeta}\|\v_{0}\|^2_{\H}  +\frac{2}{\nu\lambda}\int_{-\infty}^{0}e^{\nu\lambda\uprho+2|z(\vartheta_{\uprho}\omega)|+2\sigma\int_{\uprho}^{0}z(\vartheta_{\upeta}\omega)\d\upeta}\|\f(\uprho+s)\|^2_{\L^2(\mathcal{O})}\d\uprho,
		\end{align}
		for all $s\leq\tau$. Since $\v_0\in D(s-t,\vartheta_{-t}\omega)$ and $D$ is backward tempered, it implies from \eqref{Z3} and the definition of backward temperedness \eqref{BackTem} that there exists a time $\mathcal{T}=\mathcal{T}(\tau,\omega,D)$ such that for all $t\geq \mathcal{T}$,
		\begin{align}\label{v_0}
			e^{-\nu\lambda t+2\sigma\int_{-t}^{0}z(\vartheta_{\upeta}\omega)\d\upeta}\sup_{s\leq \tau}\|\v_{0}\|^2_{\H}\leq e^{-\frac{\nu\lambda}{3}t}\sup_{s\leq \tau}\|D(s-t,\vartheta_{-t}\omega)\|^2_{\H}\leq1.
		\end{align}
		Hence, by taking supremum on $s\in(-\infty,\tau]$ in \eqref{AB4},  we achieve the required estimate \eqref{AB1}.
	\end{proof}

	\begin{proposition}\label{IRAS}
		Suppose that $\f\in\mathrm{L}^2_{\mathrm{loc}}(\R;\L^2(\mathcal{O}))$, and Assumptions \ref{assumpO} and \ref{Hypo_f-N} are satisfied. For $K(\tau,\omega),$ the same as in \eqref{AB2}, we have
		\vskip 2mm
		\noindent
		\emph{(i)} There is an increasing $\mathfrak{D}$-pullback absorbing set $\mathcal{K}$ given by
		\begin{align}\label{IRAS1}
			\mathcal{K}(\tau,\omega):=\left\{\u\in\H:\|\u\|^2_{\H}\leq e^{z(\omega)}\left[1+\frac{2}{\nu\lambda}\sup_{s\leq \tau}K(s,\omega)\right]\right\}, \ \text{ for all } \ \tau\in\R.
		\end{align}
		Moreover, $\mathcal{K}$ is backward-uniformly tempered with an arbitrary rate, that is, $\mathcal{K}\in{\mathfrak{D}}$.
		\vskip 2mm
		\noindent
		\emph{(ii)} There is a $\mathfrak{B}$-pullback \textbf{random} absorbing set $\widetilde{\mathcal{K}}$ given by
		\begin{align}\label{IRAS11}
			\widetilde{\mathcal{K}}(\tau,\omega):=\left\{\u\in\H:\|\u\|^2_{\H}\leq e^{z(\omega)}\left[1+\frac{2}{\nu\lambda}K(\tau,\omega)\right]\right\}\in\mathfrak{B}, \ \text{ for all } \ \tau\in\R.
		\end{align}
	\end{proposition}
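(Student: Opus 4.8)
The plan is to follow the argument of Proposition~\ref{IRAS-N}, the only genuinely new difficulty being the exponential weights $e^{2|z(\vartheta_{\zeta}\omega)|}$ and $e^{2\sigma\int_{\zeta}^{0}z(\vartheta_{\upeta}\omega)\d\upeta}$ that enter $K(\cdot,\cdot)$ through \eqref{AB2}. There are three ingredients to assemble: (a) the finiteness of $\sup_{s\leq\tau}K(s,\omega)$ and of $K(\tau,\omega)$; (b) the pullback absorption of $\mathcal{K}$ and $\widetilde{\mathcal{K}}$, which is a restatement of Lemma~\ref{Absorbing} through the conjugacy \eqref{Phi}; and (c) the tempering $\mathcal{K}\in\mathfrak{D}$ and $\widetilde{\mathcal{K}}\in\mathfrak{B}$, together with the measurability of $\widetilde{\mathcal{K}}$.

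I would first prove (a). Fix $(\tau,\omega)$. By \eqref{Z3}, for every $\varepsilon>0$ there is $C_{\varepsilon}(\omega)>0$ such that $|z(\vartheta_{\zeta}\omega)|\leq C_{\varepsilon}(\omega)+\varepsilon|\zeta|$ and $\bigl|\int_{\zeta}^{0}z(\vartheta_{\upeta}\omega)\d\upeta\bigr|\leq C_{\varepsilon}(\omega)+\varepsilon|\zeta|$ for all $\zeta\leq0$. Choosing $\varepsilon$ so small that $2\varepsilon(1+\sigma)\leq\frac{\nu\lambda}{2}$, the weight in \eqref{AB2} is dominated by $C(\omega)e^{\frac{\nu\lambda}{2}\zeta}$ on $\zeta\leq0$; after the substitution $r=\zeta+s$ this gives $\sup_{s\leq\tau}K(s,\omega)\leq C(\omega)\sup_{s\leq\tau}\int_{-\infty}^{s}e^{\frac{\nu\lambda}{2}(r-s)}\|\f(r)\|^{2}_{\L^{2}(\mathcal{O})}\d r<\infty$ by \eqref{G3}, so that also $K(\tau,\omega)<\infty$. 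Since $\tau\mapsto\sup_{s\leq\tau}K(s,\omega)$ is nondecreasing while $z(\omega)$ does not depend on $\tau$, the family $\mathcal{K}$ of \eqref{IRAS1} is increasing. For (b), I would use \eqref{Phi} in the form $\Psi(t,\tau-t,\vartheta_{-t}\omega,\u_{0})=e^{z(\omega)}\v(\tau,\tau-t,\vartheta_{-\tau}\omega,\v_{0})$ with $\v_{0}=e^{-z(\vartheta_{-t}\omega)}\u_{0}$; since $\omega\mapsto e^{-z(\vartheta_{-t}\omega)}$ is tempered, $\u_{0}\in D(\tau-t,\vartheta_{-t}\omega)$ with $D\in\mathfrak{D}$ still forces $\v_{0}$ into a set of $\mathfrak{D}$, and \eqref{AB1} (with $s=\tau$) then yields $\Psi(t,\tau-t,\vartheta_{-t}\omega)D(\tau-t,\vartheta_{-t}\omega)\subseteq\mathcal{K}(\tau,\omega)$ for all $t\geq\mathcal{T}(\tau,\omega,D)$; the $\mathfrak{B}$-absorption of $\widetilde{\mathcal{K}}$ is identical, using the sharper estimate \eqref{AB3} with $\xi=s=\tau$ (whose leading term is $\leq e^{-\frac{2\nu\lambda}{3}t}\|D(\tau-t,\vartheta_{-t}\omega)\|_{\H}^{2}\to0$ for $D\in\mathfrak{B}$, again by \eqref{Z3}) in place of \eqref{AB1}.

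Next I would establish (c). Fix $c>0$. Writing $\|\mathcal{K}(s-t,\vartheta_{-t}\omega)\|^{2}_{\H}=e^{z(\vartheta_{-t}\omega)}\bigl(1+\frac{2}{\nu\lambda}\sup_{r\leq s-t}K(r,\vartheta_{-t}\omega)\bigr)$ and using $\vartheta_{\zeta}\vartheta_{-t}\omega=\vartheta_{\zeta-t}\omega$ together with the growth bounds of \eqref{Z3} (with $\varepsilon$ small also relative to $c$), the $z$-weights inside $K(r,\vartheta_{-t}\omega)$ contribute at most a factor $C(\omega)e^{\varepsilon' t}$, with $\varepsilon'$ as small as we like, and leave behind a weight $e^{\kappa_{1}\zeta}$, $\kappa_{1}=\nu\lambda-2\varepsilon(1+\sigma)>0$; after the change of variable $r\mapsto\zeta+r-t$ and an application of \eqref{G3} one obtains $\sup_{s\leq\tau}\sup_{r\leq s-t}K(r,\vartheta_{-t}\omega)\leq C(\tau,\omega)e^{\varepsilon' t}$. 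Combined with $e^{z(\vartheta_{-t}\omega)}\leq e^{\frac{c}{4}t}$ for $t$ large (from \eqref{Z5}) and the choice $\varepsilon'<\frac{3c}{4}$, this gives $e^{-ct}\sup_{s\leq\tau}\|\mathcal{K}(s-t,\vartheta_{-t}\omega)\|^{2}_{\H}\leq C(\tau,\omega)e^{-(\frac{3c}{4}-\varepsilon')t}\to0$, i.e. $\mathcal{K}\in\mathfrak{D}$. Finally, since $K(\tau,\omega)\leq\sup_{s\leq\tau}K(s,\omega)$ we have $\widetilde{\mathcal{K}}(\tau,\omega)\subseteq\mathcal{K}(\tau,\omega)$, hence $\widetilde{\mathcal{K}}\in\mathfrak{D}\subseteq\mathfrak{B}$; moreover $\omega\mapsto K(\tau,\omega)$ is $\mathscr{F}$-measurable by joint measurability of the integrand in \eqref{AB2} and Fubini's theorem, and $\omega\mapsto z(\omega)$ is measurable, so $\widetilde{\mathcal{K}}$ is a \emph{random} set and thus a $\mathfrak{B}$-pullback random absorbing set.

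The main obstacle is step (c): one must show that the $t$-dependence picked up by the exponential weights $e^{2|z(\vartheta_{\zeta}\omega)|}$ and $e^{2\sigma\int_{\zeta}^{0}z(\vartheta_{\upeta}\omega)\d\upeta}$ under the backward shift $\omega\mapsto\vartheta_{-t}\omega$ is strictly slower than the decay $e^{-ct}$. This is exactly where the sublinear growth of $z$ and of its time average, encoded in \eqref{Z3} and \eqref{Z5}, is used, and it requires carefully reserving a small exponent budget when absorbing the weights into $e^{\nu\lambda\zeta}$ — bookkeeping that is completely absent in the additive-noise Proposition~\ref{IRAS-N}.
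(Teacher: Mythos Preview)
Your proposal is correct and takes essentially the same approach as the paper's proof; you have simply unpacked what the paper leaves implicit, in particular the step where the paper writes ``using similar arguments as in \eqref{IRAS3-N}, with the help of \eqref{G3}, \eqref{Z5} and \eqref{Z3}'' for the tempering of $\mathcal{K}$. Your careful accounting of how the sublinear growth from \eqref{Z3} controls the shifted weights $e^{2|z(\vartheta_{\zeta-t}\omega)|}$ and $e^{2\sigma\int_{\zeta}^{0}z(\vartheta_{\eta-t}\omega)\d\eta}$, reserving a small exponent so that the residual growth $e^{\varepsilon' t}$ is beaten by $e^{-ct}$, is exactly the content the paper suppresses.
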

	\begin{proof}
		(i) Using \eqref{G3}, \eqref{Z5} and \eqref{Z3}, we obtain
		\begin{align}\label{IRAS2}
			\sup_{s\leq \tau}K(s,\omega)&= \sup_{s\leq \tau}\int_{-\infty}^{0}e^{\nu\lambda\zeta+2|z(\vartheta_{\zeta}\omega)|+2\sigma\int_{\zeta}^{0}z(\vartheta_{\upeta}\omega)\d\upeta}\|\f(\zeta+s)\|^2_{\L^2(\mathcal{O})}\d\zeta<+\infty.
		\end{align}
		Hence, the pullback absorption follows from Lemma \ref{Absorbing}. Due to the fact that $\tau\mapsto \sup\limits_{s\leq\tau}K(s,\omega)$ is an increasing function, $\mathcal{K}(\tau,\omega)$ is an increasing $\mathfrak{D}$-pullback absorbing set. Using similar arguments as in \eqref{IRAS3-N}, with the help of \eqref{G3}, \eqref{Z5} and \eqref{Z3}, we deduce
		\begin{align}
			&\lim_{t\to+\infty}e^{-ct}\sup_{s\leq \tau}\|\mathcal{K}(s-t,\vartheta_{-t}\omega)\|^2_{\H}=0,
		\end{align}
		which gives $\mathcal{K}\in{\mathfrak{D}}$.
		\vskip 2mm
		\noindent
		(ii) Since $\widetilde{\mathcal{K}}\subseteq\mathcal{K}\in\mathfrak{D}\subseteq\mathfrak{B}$ and the mapping $\omega\mapsto K(\tau,\omega)$ is $\mathscr{F}$-measurable,  $\widetilde{\mathcal{K}}$ is a $\mathfrak{B}$-pullback \textbf{random} absorbing set.
	\end{proof}

	\subsection{Backward uniform tail-estimates and backward flattening estimates}
	In this subsection, we prove the backward uniform tail-estimates and backward flattening estimates for the solution of \eqref{2-M}.  The following lemma provides the backward uniform tail-estimates for the solution of the system \eqref{2-M}.
	\begin{lemma}\label{largeradius}
		Suppose that Assumptions \ref{assumpO} and \ref{Hypo_f-N} are satisfied. Then, for any $(\tau,\omega,D)\in\R\times\Omega\times\mathfrak{D},$ the solution of the system \eqref{2-M}  satisfies
		\begin{align}\label{ep}
			&\lim_{k,t\to+\infty}\sup_{s\leq \tau}\sup_{\v_{0}\in D(s-t,\vartheta_{-t}\omega)}\|\v(s,s-t,\vartheta_{-s}\omega,\v_{0})\|^2_{\mathbb{L}^2(\mathcal{O}^{c}_{k})}=0,
		\end{align}
		where $\mathcal{O}_{k}=\{x\in\mathcal{O}:|x|\leq k\},$ $k\in\mathbb{N}$. In addition, for any $(\tau,\omega,B)\in\R\times\Omega\times{\mathfrak{B}},$ the solution of \eqref{CNSE-A}  satisfies
		\begin{align}\label{ep-tau}
			&\lim_{k,t\to+\infty}\sup_{\v_{0}\in B(\tau-t,\vartheta_{-t}\omega)}\|\v(\tau,\tau-t,\vartheta_{-\tau}\omega,\v_{0})\|^2_{\L^2(\mathcal{O}^{c}_{k})}=0.
		\end{align}
	\end{lemma}

	\begin{proof}
		Let $\uprho$ be the  smooth function defined in \eqref{337}. Similar to \eqref{p-value} and \eqref{p-value-N}, we obtain from \eqref{2-M} that
		\begin{align}\label{p-value-M}
			p=(-\Delta)^{-1}\left[F_{N}(e^{z(\vartheta_{t}\omega)}\|\v\|_{\V})\cdot e^{2z(\vartheta_{t}\omega)}\sum_{i,j=1}^{3}\frac{\partial^2}{\partial x_i\partial x_j}\big(y_i y_j\big)-\nabla\cdot\f\right],
		\end{align}
		in the weak sense and
		\begin{align}
			\|p\|_{\mathrm{L}^2(\mathcal{O})}\leq C N e^{z(\vartheta_{t}\omega)} \|\v\|_{\V} +C\|\f\|_{\L^2(\mathcal{O})}\label{p-value-N-M}.
		\end{align}
		Taking the inner product to the first equation of \eqref{2-M} with $\uprho^2\left(\frac{|x|^2}{k^2}\right)\v$, we have
		\begin{align}\label{ep1}
			&\frac{1}{2} \frac{\d}{\d t} \int_{\mathcal{O}}\uprho^2\left(\frac{|x|^2}{k^2}\right)|\v|^2\d x\nonumber\\&= \underbrace{\nu \int_{\mathcal{O}}(\Delta\v) \uprho^2\left(\frac{|x|^2}{k^2}\right) \v \d x}_{E_1(k,t)}-\underbrace{F_{N}(e^{z(\vartheta_{t}\omega)}\|\v\|_{\V})\cdot e^{z(\vartheta_{t}\omega)}b\left(\v,\v,\uprho^2\left(\frac{|x|^2}{k^2}\right)\v\right)}_{E_2(k,t)}\nonumber\\&\quad-\underbrace{e^{-z(\vartheta_{t}\omega)}\int_{\mathcal{O}}(\nabla p)\uprho^2\left(\frac{|x|^2}{k^2}\right)\v\d x}_{E_3(k,t)}+ \underbrace{e^{-z(\vartheta_{t}\omega)} \int_{\mathcal{O}}\f\uprho^2\left(\frac{|x|^2}{k^2}\right)\v\d x}_{E_4(k,t)} +\sigma z(\vartheta_{t}\omega)\int_{\mathcal{O}}\uprho^2\left(\frac{|x|^2}{k^2}\right)|\v|^2\d x.
		\end{align}
		Let us now estimate each term on the right hand side of \eqref{ep1}. By integration by parts, divergence free condition on $\v$,  \eqref{poin}, \eqref{p-value-N-M}, H\"older's and Ladyzhenskaya's inequalities, \eqref{FN1} and Young's inequality, we obtain as in \eqref{ep2-N}-\eqref{ep4-N}
		\begin{align}
			\left|E_1(k,t)\right|&\leq-\frac{3\nu\lambda}{4} \int_{\mathcal{O}}\left|\left(\uprho\left(\frac{|x|^2}{k^2}\right) \v\right)\right|^2\d x+\frac{C}{k}\|\v\|^2_{\V},
			\\
			\left|E_2(k,t)\right|&\leq\frac{CN}{k}\|\v\|^2_{\V},
			\\
			\left|E_3(k,t)\right|&\leq \frac{C(1+N)}{k}\bigg[\|\v\|^2_{\V}+e^{2\left|z(\vartheta_{t}\omega)\right|}\|\f\|^2_{\L^2(\mathcal{O})}\bigg],\\
			\left|E_4(k,t)\right|&\leq \frac{\nu\lambda}{4} \int_{\mathcal{O}}\uprho\left(\frac{|x|^2}{k^2}\right)|\v|^2\d x +\frac{e^{2\left|z(\vartheta_{t}\omega)\right|}}{\nu\lambda} \int_{\mathcal{O}}\uprho\left(\frac{|x|^2}{k^2}\right)|\f(x)|^2\d x.\label{ep4}
		\end{align}
		Combining \eqref{ep1}-\eqref{ep4}, we get
		\begin{align}\label{ep5}
			&\frac{\d}{\d t} \|\v\|^2_{\mathbb{L}^2(\mathcal{O}_k^{c})}+ \left(\nu\lambda-2\sigma z(\vartheta_{t}\omega)\right) \|\v\|^2_{\mathbb{L}^2(\mathcal{O}_k^c)}  
			\nonumber\\ &\leq\frac{C(1+N)}{k} \left[ \|\v\|^2_{\V}+e^{2\left|z(\vartheta_{t}\omega)\right|}\|\f\|^2_{\L^2(\mathcal{O})}\right] +\frac{2e^{2\left|z(\vartheta_{t}\omega)\right|}}{\nu\lambda} \int_{\mathcal{O}\cap\{|x|\geq k\}}|\f(x)|^2\d x.
		\end{align}
		Applying the variation of constants formula to the equation \eqref{ep5} on $(s-t,s)$ and replacing $\omega$ by $\vartheta_{-s}\omega$, we find,  for $s\leq\tau, t\geq 0$ and $\omega\in\Omega$,
		\begin{align}\label{ep6}
			&\|\v(s,s-t,\vartheta_{-s}\omega,\v_{0})\|^2_{\mathbb{L}^2(\mathcal{O}_k^{c})} \nonumber\\& \leq e^{-\alpha t+2\sigma\int_{-t}^{0}z(\vartheta_{\upeta}\omega)\d\upeta}\|\v_{0}\|^2_{\H} +\frac{C(1+N)}{k}\bigg[\int_{s-t}^{s}e^{\nu\lambda(\zeta-s)-2\sigma\int^{\zeta}_{s}z(\vartheta_{\upeta-s}\omega)\d\upeta}\|\v(\zeta,s-t,\vartheta_{-s}\omega,\v_{0})\|^2_{\V}\d\zeta
			\nonumber\\&\quad+\int_{-t}^{0}e^{2\left|z(\vartheta_{\zeta-s}\omega)\right|+\nu\lambda(\zeta-s)-2\sigma\int^{\zeta}_{s}z(\vartheta_{\upeta-s}\omega)\d\upeta}\|\f(\zeta+s)\|^2_{\L^2(\mathcal{O})}\d\zeta\bigg]\nonumber\\&\quad+C\int_{s-t}^{s}e^{2|z(\vartheta_{\zeta-s}\omega)|+\nu\lambda(\zeta-s)-2\sigma\int^{\zeta}_{s}z(\vartheta_{\upeta-s}\omega)\d\upeta} \int_{\mathcal{O}\cap\{|x|\geq k\}}|\f(x,\xi)|^2\d x\d\zeta.
		\end{align}
		Now using \eqref{f3-N}, \eqref{Z3}, the definition collections $\mathfrak{D}$ and $\mathfrak{B}$ (see Subsection \ref{CoRS}), and Lemma \ref{Absorbing}, one can immediately complete the proof.
	\end{proof}

	The following lemma provides the backward flattening estimates of solutions to the system \eqref{2-M}.
	\begin{lemma}\label{Flattening}
		Suppose that Assumptions \ref{assumpO} and \ref{Hypo_f-N} are satisfied. Let $(\tau,\omega,D)\in\R\times\Omega\times\mathfrak{D}$, $k\geq1$ be fixed, $\varrho_k$ be given by \eqref{varrho_k} and $\mathrm{P}_i$ be the same as in \eqref{DirectProd}. Then
		\begin{align}\label{FL-P-M}
			\lim_{i,t\to+\infty}\sup_{s\leq \tau}\sup_{\v_{0}\in D(s-t,\vartheta_{-t}\omega)}\|(\I-\P_{i})\bar{\v}(s,s-t,\vartheta_{-s}\omega,\bar{\v}_{0,2})\|^2_{\L^2(\mathcal{O}_{\sqrt{2}k})}=0,
		\end{align}
		where $\bar{\v}=\varrho_k\v$ and $\bar{\v}_{0,2}=(\I-\P_{i})(\varrho_k\v_{0})$. In addition, for any $(\tau,\omega,B)\in\R\times\Omega\times\mathfrak{B}$, we have
        \begin{align}\label{FL-P-M-tau}
			\lim_{i,t\to+\infty}\sup_{\v_{0}\in B(\tau-t,\vartheta_{-t}\omega)}\|(\I-\P_{i})\bar{\v}(\tau,\tau-t,\vartheta_{-\tau}\omega,\bar{\v}_{0,2})\|^2_{\L^2(\mathcal{O}_{\sqrt{2}k})}=0.
		\end{align}
	\end{lemma}
	\begin{proof}
		Multiplying  \eqref{2-M} by $\varrho_k$, we obtain 
		\begin{align}\label{FL1-M}
			&\frac{\d\bar{\v}}{\d t}-\nu\Delta\bar{\v}+F_{N}(e^{z(\vartheta_{t}\omega)}\|\v\|_{\V})\cdot
			e^{z(\vartheta_{t}\omega)}\varrho_k\left[(\v\cdot\nabla)\v\right]+e^{-z(\vartheta_{t}\omega)}\varrho_k\nabla p\nonumber\\&=e^{-z(\vartheta_{t}\omega)}\varrho_k\f +\sigma z(\vartheta_t\omega)\bar{\v}-\nu\v\Delta\varrho_k-2\nu\nabla\varrho_k\cdot\nabla\v.
		\end{align}
		As before, we get
		\begin{align}\label{FL2-M}
			&\frac{1}{2}\frac{\d}{\d t}\|\bar{\v}_{i,2}\|^2_{\L^2(\mathcal{O}_{\sqrt{2}k})} +\nu\|\nabla\bar{\v}_{i,2}\|^2_{\L^2(\mathcal{O}_{\sqrt{2}k})}-\sigma z(\vartheta_{t}\omega)\|\bar{\v}_{i,2}\|^2_{\L^2(\mathcal{O}_{\sqrt{2}k})}\nonumber\\&=-\underbrace{F_{N}(e^{z(\vartheta_{t}\omega)}\|\v\|_{\V})\cdot e^{z(\vartheta_{t}\omega)}\sum_{q,m=1}^{3}\int_{\mathcal{O}_{\sqrt{2}k}}\left(\I-\P_i\right)\bigg[y_{q}\frac{\partial y_{m}}{\partial x_q}\left\{\varrho_k(x)\right\}^2y_{m}\bigg]\d x}_{:=\widehat{J}_1}
			\nonumber\\&\quad-\underbrace{
				\left\{\nu\big(\v\Delta\varrho_k,
				\bar{\v}_{i,2}\big)
				+2\nu\big(\nabla\varrho_k
				\cdot\nabla\v,\bar{\v}_{i,2}\big)
				-\big(e^{-z(\vartheta_{t}\omega)}
				\varrho_k\f,\bar{\v}_{i,2}\big)\right\}
			}_{:=\widehat{J}_2}   -
			\underbrace{\big(e^{-z(\vartheta_{t}\omega)}\varrho_k(x)\nabla p, \bar{\v}_{i,2}\big)}_{:=\widehat{J}_3}.
		\end{align}
		Next, we estimate the terms on the right hand side of \eqref{FL2-M} as follows: Using integration by parts, divergence free condition on $\v$, \eqref{poin-i}, \eqref{FN1}, H\"older's, Ladyzhenskaya's and Young's inequalities, we arrive at
		\begin{align}
			|\widehat{J}_1|&\leq \frac{\nu}{6}\|\nabla\bar{\v}_{i,2}\|^2_{\L^2(\mathcal{O}_{\sqrt{2}k})}+CN^2\lambda^{-1}_{i+1} \|\v\|^2_{\V},\label{FL3-M}\\
			|\widehat{J}_2|&\leq \frac{\nu}{6}\|\nabla\bar{\v}_{i,2}\|^2_{\L^2(\mathcal{O}_{\sqrt{2}k})}+ C\lambda^{-1}_{i+1}\bigg[\|\v\|^2_{\V}+e^{2|z(\vartheta_{t}\omega)|}\|\f\|^2_{\L^2(\mathcal{O})}\bigg],\label{FL4-M}\\
			|\widehat{J}_3|&\leq \frac{\nu}{6}\|\nabla\bar{\v}_{i,2}
			\|^2_{\L^2(\mathcal{O}_{\sqrt{2}k})} +
			C\lambda^{-1}_{i+1}(1+N^2)\left[\|\v\|^2_{\V} + e^{2\left|z(\vartheta_{t}\omega)\right|}\|\f\|^2_{\L^2(\mathcal{O})}\right]
			,\label{FL5-M}
		\end{align}
		where we have used \eqref{p-value-N-M} in \eqref{FL5-M}. Now, combining \eqref{FL2-M}-\eqref{FL5-M} and using \eqref{poin} in the resulting inequality, we arrive at
		\begin{align}\label{FL6-M}
			\frac{\d}{\d t}\|\bar{\v}_{i,2}\|^2_{\L^2(\mathcal{O}_{\sqrt{2}k})} +\left(\nu\lambda-2\sigma z(\vartheta_{t}\omega)\right)\|\bar{\v}_{i,2}\|^2_{\L^2(\mathcal{O}_{\sqrt{2}k})} \leq C\lambda^{-1}_{i+1}(1+N^2)\left[\|\v\|^2_{\V} + e^{2\left|z(\vartheta_{t}\omega)\right|}\|\f\|^2_{\L^2(\mathcal{O})}\right].
		\end{align} 
		In view of the variation of constants formula, by \eqref{FL7}, we find
		\begin{align}\label{FL8-M}
			&\|(\I-\P_{i})\bar{\v}(s,s-t,\vartheta_{-s}\omega,\bar{\v}_{0,2})\|^2_{\L^2(\mathcal{O}_{\sqrt{2}k})}\nonumber\\&\leq e^{-\nu\lambda t+4{\aleph}\int^{0}_{-t}\left|z(\vartheta_{\upeta}\omega)\right|\d\upeta}\|(\I-\P_i)(\varrho_k\v_{0})\|^2_{\L^2(\mathcal{O}_{\sqrt{2}k})}\nonumber\\&\quad+C\lambda^{-1}_{i+1}(1+N^2)\underbrace{\int_{s-t}^{s}e^{\nu\lambda(\zeta-s)-2\sigma\int^{\zeta}_{s}z(\vartheta_{\upeta-s}\omega)\d\upeta}\|\v(\zeta,s-t,\vartheta_{-s}\omega,\v_{0})\|^2_{\V}\d\zeta}_{\widehat{L}_1(s,t)}\nonumber\\&\quad+C\lambda^{-1}_{i+1}(1+N^2)\underbrace{\int_{-t}^{0}e^{2\left|z(\vartheta_{\zeta-s}\omega)\right|+\nu\lambda(\zeta-s)-2\sigma\int^{\zeta}_{s}z(\vartheta_{\upeta-s}\omega)\d\upeta}\|\f(\zeta+s)\|^2_{\L^2(\mathcal{O})}\d\zeta}_{\widehat{L}_2(s,t)}.
		\end{align}
		It implies from \eqref{Z3}, \eqref{G3} and \eqref{AB1} that
		\begin{align}\label{FL9-M}
			\sup_{s\leq \tau}\widehat{L}_1(s,t)<+\infty \ \ \text{ and } \ \ \sup_{s\leq \tau}\widehat{L}_2(s,t)<+\infty,
		\end{align}
		for sufficiently large $t>0$.  Furthermore, we have
		\begin{align}\label{FL11-M}
			\|(\I-\P_i)(\varrho_k\v_{0})\|^2_{\L^2(\mathcal{O}_{\sqrt{2}k})}\leq C\|\v_{0}\|^2_{\H},
		\end{align}
		for all $\v_{0}\in D(s-t,\vartheta_{-t}\omega)$ and $s\leq\tau$. Now, using the definition of collections $\mathfrak{D}$ and $\mathfrak{B}$ (see Subsection \ref{CoRS}), \eqref{Z3}, \eqref{G3}, Lemma \ref{Absorbing}, \eqref{FL9-M}-\eqref{FL11-M} and the fact that $\lambda_{i+1}^{-1}\to 0$ as $i\to+\infty$ in \eqref{FL8-M}, we obtain \eqref{FL-P-M} and \eqref{FL-P-M-tau}, as required.
	\end{proof}

	\subsection{Proof of Theorem \ref{MT1}}\label{thm1.5}
	This subsection is devoted to the proof of the main results of this section, that is, the existence of $\mathfrak{D}$-pullback random attractors and their asymptotic autonomous robustness for the system \eqref{SNSE} with $S(\u)=\u$. 

The proof of this theorem is  divided into the following seven steps:
	\vskip 2mm
	\noindent
	\textbf{Step I:} \textit{$\mathfrak{D}$-pullback time-semi-uniform asymptotic compactness of $\Psi$.} It is enough to prove that for each $(\tau,\omega,D)\in\R\times\Omega\times\mathfrak{D}$, arbitrary sequences $s_n\leq\tau$, $\tau_{n}\to+\infty$ and $\v_{0,n}\in D(s_n-t_n,\vartheta_{-t_n}\omega)$,  the sequence $$\v_n:=\v(s_n,s_n-t_n,\vartheta_{-s_n}\omega,\v_{0,n})$$ is pre-compact in $\H$. Let $E_{N}=\{\v_n:n\geq N\}, \ N=1,2,\ldots.$ In order to prove the pre-compactness of the sequence $\v_n$, it is enough to show that the Kuratowski measure $\kappa_{\H}(E_{N})\to0$ and $N\to+\infty$, (see Lemma \ref{K-BAC}).
	
	Since, $t_n\to\infty$, there exists $\widehat{\mathcal{N}}\in\N$ such that $t_n\geq \mathcal{T}$ for all $n\geq \widehat{\mathcal{N}}$. For each $\eta>0$, by Lemma \ref{largeradius}, there exists $\widehat{\mathcal{N}}_1\geq \widehat{\mathcal{N}}$ and $\widehat{K}_0\geq1$ such that
	\begin{align}\label{MTM1}
		\left\|\uprho\left(\frac{|x|^2}{k^2}\right)\v_n\right\|_{\L^2(\mathcal{O}^{c}_{K})}\leq\frac{\eta}{2}, 
	\end{align}
	for all $n\geq \widehat{\mathcal{N}}_1$ and for all  $K\geq \widehat{K}_0$, where $\mathcal{O}^{c}_{K}=\mathcal{O}\backslash\mathcal{O}_{K}$ and  $\mathcal{O}_{K}=\{x\in\mathcal{O}:|x|\leq K\}.$  By Lemma \ref{Flattening}, there exist $\hat{i}_0\in\N$ and $\widehat{\mathcal{N}}_2\geq\widehat{\mathcal{N}}_1$ such that
	\begin{align}\label{MTM2}
		\|(\I-\P_i)(\varrho_{K}\v_n)\|_{\L^2(\mathcal{O}_{\sqrt{2}K})}\leq\frac{\eta}{2}, 
	\end{align}
	for all $n\geq \widehat{\mathcal{N}}_2$ and for all  $i\geq \hat{i}_0$.
	
	Now, Lemma \ref{Absorbing} provides us that the set $E_{\widehat{\mathcal{N}}_2}$ is bounded in $\H$. Then, the set $\{\varrho_{K}\v_n:n\geq\widehat{\mathcal{N}}_2\}$ is bounded in $\L^2(\mathcal{O}_{\sqrt{2}K})$. Hence, by the finite-dimensional range of $\P_i$, $\P_{i}\{\varrho_{K}\v_n:n\geq\widehat{\mathcal{N}}_2\}$ is pre-compact in $\L^2(\mathcal{O}_{\sqrt{2}K})$, from which we conclude that
	\begin{align}\label{MTM3}
		\kappa_{\L^2(\mathcal{O}_{\sqrt{2}K})}\left(\P_{i}\{\varrho_{K}\v_n:n\geq\widehat{\mathcal{N}}_2\}\right)=0.
	\end{align}
	It follows from \eqref{MTM2}-\eqref{MTM3} and \cite[Theorem 1.4]{Rakocevic} that
	\begin{align}\label{MTM4}
		& \kappa_{\L^2(\mathcal{O}_{\sqrt{2}K})}\left(\{\varrho_{K}\v_n:n\geq\widehat{\mathcal{N}}_2\}\right)\nonumber\\&\leq\kappa_{\L^2(\mathcal{O}_{\sqrt{2}K})}\left(\P_{i}\{\varrho_{K}\v_n:n\geq\widehat{\mathcal{N}}_2\}\right)+\kappa_{\L^2(\mathcal{O}_{\sqrt{2}K})}\left((\I-\P_{i})\{\varrho_{K}\v_n:n\geq\widehat{\mathcal{N}}_2\}\right)\nonumber\\& \leq\frac{\eta}{2}.
	\end{align}
	We infer from \eqref{MTM1} and \eqref{MTM4} that
	\begin{align*}
		\kappa_{\H}(E_{\widehat{\mathcal{N}}_2})\leq\kappa_{\L^2(\mathcal{O}_{\sqrt{2}K})}(\varrho_{K}E_{\widehat{\mathcal{N}}_2})+\kappa_{\L^2(\mathcal{O}^{c}_{K})}\left(\uprho\left(\frac{|x|^2}{k^2}\right)E_{\widehat{\mathcal{N}}_2}\right)\leq \eta,
	\end{align*}
	which shows that $\Psi$ is time-semi-uniformly asymptotically compact in $\H$.
	\vskip 2mm
	\noindent
	\textbf{Step II:} \textit{$\mathfrak{B}$-pullback asymptotically compactness of $\Psi$.} 
    {Using similar arguments as in \textbf{Step I}, we will prove that for each $(\tau,\omega,B)\in\R\times\Omega\times\mathfrak{B}$, arbitrary sequence $t_{n}\to+\infty$ and $\v_{0,n}\in B(\tau-t_n,\vartheta_{-t_n}\omega)$,  the sequence 
	$$\v^n:=\v(\tau,\tau-t_n,\vartheta_{-\tau}\omega,\v_{0,n})$$ 
	is pre-compact in $\H$. We also denote $E^{N}=\{\v^n:n\geq N\}, \ N=1,2,\ldots.$ 
	In view of Lemma \ref{largeradius}, for each $\eta>0$, there exists $\widehat{\mathcal{N}}_3\in\N$ and $\widehat{K}_1\geq 1$ such that
	\begin{align*}
		\left\|\uprho\left(\frac{|x|^2}{k^2}\right)\v^n\right\|_{\L^2(\mathcal{O}^{c}_{K})}\leq\frac{\eta}{2}, 
	\end{align*}
	for all $n\geq \widehat{\mathcal{N}}_3$ and for all  $K\geq \widehat{K}_1$. 
	 In view of Lemma \ref{Flattening}, there exist $\hat{i}_1\in\N$ and $\widehat{\mathcal{N}}_4\geq\widehat{\mathcal{N}}_3$ such that
	\begin{align*}
		\|(\I-\P_i)(\varrho_{K}\v^n)\|_{\L^2(\mathcal{O}_{\sqrt{2}K})}\leq\frac{\eta}{2}, 
	\end{align*}
	for all $n\geq \widehat{\mathcal{N}}_4$ and for all  $i\geq \hat{i}_1$. Since the set $E^{\widehat{\mathcal{N}}_4}$ is bounded in $\H$. Then, the set $\{\varrho_{K}\v^n:n\geq\widehat{\mathcal{N}}_4\}$ is bounded in $\L^2(\mathcal{O}_{\sqrt{2}K})$. Hence, $\P_{i}\{\varrho_{K}\v^n:n\geq\widehat{\mathcal{N}}_4\}$ is pre-compact in $\L^2(\mathcal{O}_{\sqrt{2}K})$, from which we deduce that
	\begin{align*}
		\kappa_{\L^2(\mathcal{O}_{\sqrt{2}K})}\left(\P_{i}\{\varrho_{K}\v^n:n\geq\widehat{\mathcal{N}}_4\}\right)=0.
	\end{align*}
	Therefore,
	\begin{align*}
		&\kappa_{\L^2(\mathcal{O}_{\sqrt{2}K})}\left(\{\varrho_{K}\v^n:n\geq\widehat{\mathcal{N}}_4\}\right)\nonumber\\&\leq\kappa_{\L^2(\mathcal{O}_{\sqrt{2}K})}\left(\P_{i}\{\varrho_{K}\v^n:n\geq\widehat{\mathcal{N}}_4\}\right)+\kappa_{\L^2(\mathcal{O}_{\sqrt{2}K})}\left((\I-\P_{i})\{\varrho_{K}\v^n:n\geq\widehat{\mathcal{N}}_4\}\right)\nonumber\\& \leq\frac{\eta}{2}.
	\end{align*}
	From the above estimates, we obtain
	\begin{align*}
		\kappa_{\H}(E^{\widehat{\mathcal{N}}_4})\leq\kappa_{\L^2(\mathcal{O}_{\sqrt{2}K})}(\varrho_{K}E^{\widehat{\mathcal{N}}_4})+\kappa_{\L^2(\mathcal{O}^{c}_{K})}\left(\uprho\left(\frac{|x|^2}{k^2}\right)E^{\widehat{\mathcal{N}}_4}\right)\leq \eta,
	\end{align*}
	which shows that $\Psi$ is $\mathfrak{B}$-pullback asymptotically compact in $\H$.}
	\vskip 2mm
	\noindent
	\textbf{Step III:} \textit{$\mathfrak{D}$-pullback attractor $\mathscr{A}(\tau,\omega)$.} Proposition \ref{IRAS}  and \textbf{Step I} ensure that $\Psi$ has a $\mathfrak{D}$-pullback absorbing set and $\Psi$ is $\mathfrak{D}$-pullback asymptotically compact in $\H$, respectively. Hence, by the abstract theory established in \cite{SandN_Wang}, $\Psi$ has a unique $\mathfrak{D}$-pullback attractor $\mathscr{A}$ which is given by
	\begin{align}\label{A3}
		\mathscr{A}=\bigcap\limits_{t_0>0}\overline{\bigcup\limits_{t\geq t_0}\Psi(t,\tau-t,\vartheta_{-t}\omega)\mathcal{K}(\tau-t,\vartheta_{-t}\omega)}^{\H}.
	\end{align}
	However, we remark that the $\mathscr{F}$-measurability of $\mathscr{A}$ is unknown, therefore we say that $\mathscr{A}$ is a $\mathfrak{D}$-pullback attractor instead of $\mathfrak{D}$-pullback random attractor.
	\vskip 2mm
	\noindent
	\textbf{Step IV:} \textit{$\mathfrak{B}$-pullback attractor $\widetilde{\mathscr{A}}(\tau,\omega)$.} Proposition \ref{IRAS} (part (ii)) and \textbf{Step II} ensure that $\Psi$ has a $\mathfrak{B}$-pullback random absorbing set and $\Psi$ is $\mathfrak{B}$-pullback asymptotically compact, respectively. Hence, by the abstract theory established in \cite{SandN_Wang}, $\Psi$ has a unique $\mathfrak{B}$-pullback random attractor $\widetilde{\mathscr{A}}$ which is given by
	\begin{align}\label{A4}
	\widetilde{\mathscr{A}}=\bigcap\limits_{t_0>0}\overline{\bigcup\limits_{t\geq t_0}\Psi(t,\tau-t,\vartheta_{-t}\omega)\widetilde{\mathcal{K}}(\tau-t,\vartheta_{-t}\omega)}^{\H}.
	\end{align}
	\vskip 2mm
	\noindent
	\textbf{Step V:} \textit{Time-semi-uniformly compactness of $\mathscr{A}(\tau,\omega)$.} We prove that $\bigcup\limits_{s\leq\tau}\mathscr{A}(s,\omega)$ is pre-compact in $\H$. Let $\{\u_n\}_{n=1}^{\infty}$ be an arbitrary sequence extracted from $\bigcup\limits_{s\leq\tau}\mathscr{A}(s,\omega)$. Then, we can find a sequence $s_n\leq\tau$ such that $\u_n\in\mathscr{A}(s_n,\omega)$ for each $n\in\N$. Now, for the sequence $t_n\to\infty$, by the invariance property of $\mathscr{A},$ we have $\u_n\in\Psi(t_n,s_n-t_n,\vartheta_{-t_{n}}\omega)\mathscr{A}(s_n-t_n,\vartheta_{-t_{n}}\omega)$. It implies that we can find $\u_{0,n}\in\mathscr{A}(s_n-t_n,\vartheta_{-t_{n}}\omega)$ such that $\u_n=\Psi(t_n,s_n-t_n,\vartheta_{-t_{n}}\omega,\u_{0,n})$, where $\u_{0,n}\in\mathscr{A}(s_n-t_n,\vartheta_{-t_{n}}\omega)\subseteq\mathcal{K}(s_n-t_n,\vartheta_{-t_{n}}\omega)$ with $s_n\leq\tau$ and $\mathcal{K}\in\mathfrak{D}$.  Then,  it follows from  the $\mathfrak{D}$-pullback time-semi-uniform asymptotic compactness of $\Psi$ that the sequence $\{\u_n\}_{n=1}^{\infty}$ is pre-compact in $\H$. Hence, $\bigcup\limits_{s\leq\tau}\mathscr{A}(s,\omega)$ is pre-compact in $\H$.
	\vskip 2mm
	\noindent
	\textbf{Step VI:} \textit{$\mathscr{A}(\tau,\omega)=\widetilde{\mathscr{A}}(\tau,\omega)$. This  implies that $\Psi$ has a unique pullback \textbf{random} attractor which is time-semi-uniformly compact in $\H$.} Let us fix $(\tau,\omega)\in\R\times\Omega$. Since, by Proposition \ref{IRAS}, $\mathcal{K}(\tau,\omega)\supseteq\widetilde{\mathcal{K}}(\tau,\omega)$, it follows from \eqref{A3} and \eqref{A4} that $\mathscr{A}(\tau,\omega)\supseteq\widetilde{\mathscr{A}}(\tau,\omega)$. At the same time, since $\mathscr{A}\in\mathfrak{B}\subseteq\mathfrak{D}$, the invariance property of $\mathscr{A}$ and the attraction property of $\widetilde{\mathscr{A}}$ imply that
	\begin{align*}
		\text{dist}_{\H}(\mathscr{A}(\tau,\omega),\widetilde{\mathscr{A}}(\tau,\omega))=\text{dist}_{\H}(\Psi(t,\tau-t,\vartheta_{-t}\omega)\mathscr{A}(\tau-t,\vartheta_{-t}\omega),\widetilde{\mathscr{A}}(\tau,\omega))\to 0,
	\end{align*}
	as $t\to+\infty$. This indicates that $\mathscr{A}(\tau,\omega)\subseteq\widetilde{\mathscr{A}}(\tau,\omega)$. Hence $\mathscr{A}(\tau,\omega)=\widetilde{\mathscr{A}}(\tau,\omega)$, which, in view of the $\mathscr{F}$-measurability of $\widetilde{\mathscr{A}}(\tau,\omega)$, shows that $\mathscr{A}(\tau,\omega)$ is $\mathscr{F}$-measurable.
	\vskip 2mm
	\noindent
	\textbf{Step VII:} \textit{Proof of \eqref{MT2-N} and \eqref{MT3-N}.} By using {Propositions} \ref{IRAS} and \ref{Back_conver} and \emph{time-semi-uniformly compactness} of $\mathscr{A}$, and applying similar arguments as in the proof of \cite[Theorem 5.2]{CGTW}, one can complete the proof. Since, the arguments are similar to the proof of \cite[Theorem 5.2]{CGTW}, we are not repeating here.

	\begin{appendix}
		\renewcommand{\thesection}{\Alph{section}}
		\numberwithin{equation}{section}
		
		\section{GMNSE with a different cut-off} \label{ApA}\setcounter{equation}{0}
		In this section, we discuss another modification of the 3D Navier-Stokes equations given in \eqref{3D-NSE}. In particular, we replace the nonlinear term $(\u\cdot\nabla)\u$ with $F_{N}(\|\u\|_{\L^4(\mathcal{O})})\left[(\u\cdot\nabla)\u\right]$ in the system \eqref{3D-NSE} and obtain the following 3D globally modified Navier-Stokes equations:
		\begin{equation}\label{A.1}
			\left\{
			\begin{aligned}
				\frac{\partial \u}{\partial t}-\nu \Delta\u+F_{N}(\|\u\|_{\L^4(\mathcal{O})})\left[(\u\cdot\nabla)\u\right]+\nabla p&=\boldsymbol{f}, &&\text{ in }\  \mathcal{O}\times(\tau,\infty), \\ \nabla\cdot\u&=0, && \text{ in } \ \ \mathcal{O}\times[\tau,\infty), \\ \u&=0, && \text{ on } \ \ \partial\mathcal{O}\times[\tau,\infty), \\
				\u(\tau)&=\u_0, && \ x\in \mathcal{O},
			\end{aligned}
			\right.
		\end{equation}
		where $F_N(\cdot)$ is the same as defined in Section \ref{sec1} (see \eqref{FN} above). Here, the modifying factor $F_{N}(\|\u\|_{\L^4(\mathcal{O})})$ depends on the norm $\|\u\|_{\L^4(\mathcal{O})}$. Essentially, it prevents large $\L^4(\mathcal{O})$-norm, instead of the $\V$-norm in contrast to the model given in \eqref{2},  dominating the dynamics and leading to explosions. It appears to be a better modification of  3D Navier-Stokes equations in comparison to the system \eqref{2} since  $\|\u\|_{\L^4(\mathcal{O})}\leq C \|\u\|_{\V}$, by an application of Sobolev's inequality.
		
		We notice that the analogous calculations as performed in \cite[Lemma 2.1]{Romito_2009} provide the following result:
		\begin{lemma}
			For any $\u,\boldsymbol{v}\in\L^4(\mathcal{O})$ and each $N>0$, 
			\begin{align}
				0 \leq \|\u\|_{\L^4(\mathcal{O})} F_{N}(\|\u\|_{\L^4(\mathcal{O})}) & \leq N, \label{FN3}\\
				|F_{N}(\|\u\|_{\L^4(\mathcal{O})})-F_{N}(\|\boldsymbol{v}\|_{\L^4(\mathcal{O})})|&\leq \frac{1}{N} F_{N}(\|\u\|_{\L^4(\mathcal{O})})F_{N}(\|\boldsymbol{v}\|_{\L^4(\mathcal{O})})\cdot \|\u-\boldsymbol{v}\|_{\L^4(\mathcal{O})}.\label{FN4} 
			\end{align}
		\end{lemma}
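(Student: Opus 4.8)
The final statement to prove is the lemma giving two properties \eqref{FN3} and \eqref{FN4} of the cut-off function $F_N$ evaluated on the $\mathbb{L}^4(\mathcal{O})$-norm, in complete analogy with Romito's Lemma 2.1 (cited in the excerpt as \cite[Lemma 2.1]{Romito_2009}). The plan is simply to transplant the proof of the $\V$-norm version to the $\mathbb{L}^4(\mathcal{O})$-norm, since both statements only involve the scalar function $F_N(r) = \min\{1, N/r\}$ composed with a norm; nothing about the specific norm is used beyond the triangle inequality and nonnegativity.

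\textbf{Proof strategy.} For \eqref{FN3}: write $r := \|\u\|_{\mathbb{L}^4(\mathcal{O})} \in [0,\infty)$. If $r \le N$ then $F_N(r) = 1$, so $r F_N(r) = r \le N$; if $r > N$ then $F_N(r) = N/r$, so $r F_N(r) = N$. In both cases $0 \le r F_N(r) \le N$, which is exactly \eqref{FN3}. For \eqref{FN4}: set $r = \|\u\|_{\mathbb{L}^4(\mathcal{O})}$ and $s = \|\boldsymbol{v}\|_{\mathbb{L}^4(\mathcal{O})}$, and prove the purely one-dimensional inequality
\begin{align*}
|F_N(r) - F_N(s)| \le \frac{1}{N} F_N(r) F_N(s) \, |r - s|, \qquad r, s \ge 0.
\end{align*}
This is done by cases on whether each of $r, s$ lies in $[0,N]$ or $(N,\infty)$. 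When both are $\le N$, the left side is $0$ and there is nothing to prove. When both exceed $N$, $F_N(r) - F_N(s) = N/r - N/s = N(s-r)/(rs)$, so $|F_N(r)-F_N(s)| = N|r-s|/(rs) = \frac{1}{N}(N/r)(N/s)|r-s| = \frac{1}{N}F_N(r)F_N(s)|r-s|$, with equality. In the mixed case, say $r > N \ge s$, then $F_N(s) = 1$, $F_N(r) = N/r$, and $|F_N(r)-F_N(s)| = 1 - N/r = (r-N)/r \le (r-s)/r = \frac{1}{N}(N/r)\cdot 1 \cdot |r-s| = \frac{1}{N}F_N(r)F_N(s)|r-s|$, using $s \le N$ and $r > N$ so that $r - s \ge r - N \ge 0$ and $r > s$. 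Finally, one invokes the reverse triangle inequality $|r - s| = \big|\|\u\|_{\mathbb{L}^4(\mathcal{O})} - \|\boldsymbol{v}\|_{\mathbb{L}^4(\mathcal{O})}\big| \le \|\u - \boldsymbol{v}\|_{\mathbb{L}^4(\mathcal{O})}$, together with monotonicity of $t \mapsto \frac{1}{N}F_N(r)F_N(s)\,t$, to replace $|r-s|$ by $\|\u - \boldsymbol{v}\|_{\mathbb{L}^4(\mathcal{O})}$ on the right, yielding \eqref{FN4}.

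\textbf{Order of steps.} First establish the elementary scalar facts about $F_N$ (boundedness of $rF_N(r)$ and the Lipschitz-type bound above), which is the entire mathematical content; then specialize $r, s$ to the two $\mathbb{L}^4$-norms and apply the reverse triangle inequality to finish. Since the excerpt explicitly notes that ``analogous calculations as performed in \cite[Lemma 2.1]{Romito_2009}'' suffice, the write-up can legitimately be very short, essentially: ``The proof is identical to that of \cite[Lemma 2.1]{Romito_2009}, replacing the $\V$-norm by the $\mathbb{L}^4(\mathcal{O})$-norm throughout; only the reverse triangle inequality and the definition of $F_N$ are used.''

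\textbf{Main obstacle.} There is genuinely no serious obstacle here — this is a routine lemma whose proof is a two-line case analysis on a scalar function. The only thing to be careful about is the mixed case $r > N \ge s$ (or symmetrically $s > N \ge r$), where one must check that the one-sided estimate $r - N \le r - s$ and the positivity $F_N(r)F_N(s) = N/r$ line up correctly; this is the single point where a careless sign or a wrong case split could creep in, but it is straightforward once written out. No function-space theory, no compactness, and no estimates specific to the domain $\mathcal{O}$ are needed.
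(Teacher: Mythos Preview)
Your proposal is correct and matches the paper's approach: the paper does not give an explicit proof but simply remarks that ``the analogous calculations as performed in \cite[Lemma 2.1]{Romito_2009} provide the following result,'' which is exactly the elementary scalar case analysis (plus the reverse triangle inequality) that you have written out in full.
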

		
		Taking the projection $\mathcal{P}$ on the 3D SGMNSE \eqref{A.1}, one obtains the following abstract form in $\V^*$: 
		\begin{equation}\label{A.2}
			\left\{
			\begin{aligned}
				\frac{\d\u}{\d t}+\nu \A\u+F_{N}(\|\u\|_{\L^4(\mathcal{O})})\cdot\B(\u)&=\mathcal{P}\f, \\
				\u(\tau)&=\u_{0}.
			\end{aligned}
			\right.
		\end{equation}

		Let us now discuss the solvability result of the system \eqref{A.2} in the next theorem.	
		\begin{theorem}\label{thm-weak}
			Let $\u_{0}\in\H$ and $\f\in\mathrm{L}^2_{\mathrm{loc}}(\R;\L^2(\mathcal{O}))$. Then, there exists a unique weak solution $\u\in\mathrm{C}([\tau,+\infty);\H)\cap\mathrm{L}^2_{\mathrm{loc}}(\tau,+\infty;\V)$ to the system \eqref{A.2}. Moreover, the solution satisfies the following estimate:
			\begin{align}\label{EI}
				\|\u(t)\|_{\H}^2+\frac{\nu}{2} \int_{\tau}^{t}e^{-\nu\lambda (t-s)}\|\u(s)\|_{\V}^2\d s \leq  e^{-\nu\lambda (t-\tau)}\|\u_0\|_{\H}^2   +  \frac{2}{\nu} \int_{\tau}^{t}e^{-\nu\lambda (t-s)}\|\f(s)\|_{\H^{-1}(\mathcal{O})}^2\d s,
			\end{align}
		for all $t\geq \tau$.
		\end{theorem}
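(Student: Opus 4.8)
The plan is to exploit the structural feature produced by the \emph{global} modification: with the cut-off $F_{N}(\|\cdot\|_{\L^4(\mathcal{O})})$ standing in front of the inertial term, the map $\u\mapsto F_{N}(\|\u\|_{\L^4(\mathcal{O})})\B(\u)$ is \emph{globally Lipschitz} from $\V$ into $\V^{*}$, so that \eqref{A.2} is a globally Lipschitz perturbation of the linear Stokes equation. I would therefore construct the solution by a contraction argument on a short time interval and then use the a priori energy estimate \eqref{EI} to propagate it to a global solution. In particular, no compactness argument is needed, so the unboundedness of $\mathcal{O}$ is immaterial for this theorem.

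\emph{Step 1 (Lipschitz bounds for the modified nonlinearity).} Writing the analogue of \eqref{BN-diff} for the $\L^4$-cut-off, and using the estimate $|b(\u,\boldsymbol{v},\w)|\le C\|\u\|_{\L^4(\mathcal{O})}\|\boldsymbol{v}\|_{\L^4(\mathcal{O})}\|\w\|_{\V}$ (a consequence of \eqref{b0} and H\"older's inequality, hence $\|\B(\u,\boldsymbol{v})\|_{\V^{*}}\le C\|\u\|_{\L^4(\mathcal{O})}\|\boldsymbol{v}\|_{\L^4(\mathcal{O})}$), the bound \eqref{FN3} to factor out $F_{N}(\|\u\|_{\L^4(\mathcal{O})})\|\u\|_{\L^4(\mathcal{O})}\le N$, the bound \eqref{FN4} to control the increment of $F_{N}$, and Ladyzhenskaya's inequality $\|\u\|_{\L^4(\mathcal{O})}\le C\|\u\|_{\H}^{1/4}\|\u\|_{\V}^{3/4}$, I would obtain the linear growth bound $\|F_{N}(\|\u\|_{\L^4(\mathcal{O})})\B(\u)\|_{\V^{*}}\le C_{N}\|\u\|_{\V}$ together with
\begin{align*}
\bigl\|F_{N}(\|\u\|_{\L^4(\mathcal{O})})\B(\u)-F_{N}(\|\boldsymbol{v}\|_{\L^4(\mathcal{O})})\B(\boldsymbol{v})\bigr\|_{\V^{*}}\le C_{N}\,\|\u-\boldsymbol{v}\|_{\H}^{1/4}\,\|\u-\boldsymbol{v}\|_{\V}^{3/4},
\end{align*}
with $C_{N}$ depending only on $N$ and on the Sobolev/Ladyzhenskaya constants. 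This is the only place where one must check that the weaker $\L^4$-cut-off performs as well as the $\V$-cut-off treated in \eqref{FN1}--\eqref{FN2}, and I expect it to be the main (though still routine) point.

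\emph{Step 2 (local existence and uniqueness).} For $T>\tau$ set $\mathcal{X}_{T}:=\mathrm{C}([\tau,T];\H)\cap\mathrm{L}^{2}(\tau,T;\V)$, a Banach space, and define $\mathcal{S}:\mathcal{X}_{T}\to\mathcal{X}_{T}$ by $\mathcal{S}\u:=\w$, the unique solution of the \emph{linear} problem $\frac{\d\w}{\d t}+\nu\A\w=\mathcal{P}\f-F_{N}(\|\u\|_{\L^4(\mathcal{O})})\B(\u)$ with $\w(\tau)=\u_{0}$; the right-hand side lies in $\mathrm{L}^{2}(\tau,T;\V^{*})$ by Step 1 and the hypothesis $\f\in\mathrm{L}^{2}_{\mathrm{loc}}(\R;\L^{2}(\mathcal{O}))$, so $\mathcal{S}$ is well defined and takes values in $\mathcal{X}_{T}$ (standard linear parabolic theory, with the Lions--Magenes lemma supplying the $\mathrm{C}([\tau,T];\H)$-regularity and the differentiability of $t\mapsto\|\w(t)\|_{\H}^{2}$). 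Testing the equation for $\mathcal{S}\u_{1}-\mathcal{S}\u_{2}$ with itself and using Young's inequality, \eqref{poin}, the Lipschitz bound of Step 1 and H\"older's inequality in time, I would get $\|\mathcal{S}\u_{1}-\mathcal{S}\u_{2}\|_{\mathcal{X}_{T}}\le C_{N}(T-\tau)^{1/8}\|\u_{1}-\u_{2}\|_{\mathcal{X}_{T}}$, so $\mathcal{S}$ is a contraction once $T-\tau$ is small (depending only on $\nu$ and $N$). Banach's fixed point theorem then gives a unique solution of \eqref{A.2} on $[\tau,T]$, depending continuously on the initial data by the same estimate.

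\emph{Step 3 (energy estimate and globalization).} The solution just obtained satisfies $\u\in\mathrm{L}^{2}(\tau,T;\V)$ with $\frac{\d\u}{\d t}\in\mathrm{L}^{2}(\tau,T;\V^{*})$, so I would test \eqref{A.2} with $\u$: the nonlinear term vanishes since $\langle F_{N}(\|\u\|_{\L^4(\mathcal{O})})\B(\u),\u\rangle=F_{N}(\|\u\|_{\L^4(\mathcal{O})})\,b(\u,\u,\u)=0$ by \eqref{b0}, while $\langle\mathcal{P}\f,\u\rangle\le\|\f\|_{\H^{-1}(\mathcal{O})}\|\u\|_{\V}$, and Young's inequality together with \eqref{poin} yields $\frac{\d}{\d t}\|\u\|_{\H}^{2}+\nu\lambda\|\u\|_{\H}^{2}+\frac{\nu}{2}\|\u\|_{\V}^{2}\le\frac{2}{\nu}\|\f\|_{\H^{-1}(\mathcal{O})}^{2}$; multiplying by $e^{\nu\lambda t}$ and integrating over $[\tau,t]$ gives exactly \eqref{EI}. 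Since the length of the local interval in Step 2 is uniform and \eqref{EI} bounds $\|\u(t)\|_{\H}$ a priori on that interval, restarting the construction at the endpoints and patching (uniqueness guaranteeing consistency) produces the global solution $\u\in\mathrm{C}([\tau,+\infty);\H)\cap\mathrm{L}^{2}_{\mathrm{loc}}(\tau,+\infty;\V)$, with uniqueness and \eqref{EI} holding on all of $[\tau,+\infty)$.
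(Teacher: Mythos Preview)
Your argument is correct and it takes a genuinely different route from the paper's. The paper outsources existence to the Faedo--Galerkin scheme of \cite{Caraballo+Real+Kloeden_2006}, derives the energy estimate \eqref{EI} exactly as you do in Step~3, and then proves uniqueness separately by the same $\w=\u-\boldsymbol{v}$ Gronwall computation you would obtain by pairing your Step~1 Lipschitz bound with $\w$ (the paper's display ends with $\frac{\nu}{2}\|\w\|_{\V}^2+CN^{8}\|\w\|_{\H}^2$, matching your $\|\u-\boldsymbol{v}\|_{\H}^{1/4}\|\u-\boldsymbol{v}\|_{\V}^{3/4}$ after Young). To justify the energy equality needed for that uniqueness step, the paper makes a forward reference to the strong-solution Theorem~\ref{thm-strong}.

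Your fixed-point construction is cleaner in two respects. First, existence and uniqueness come out together, and the Lions--Magenes lemma gives the energy equality directly from $\u\in\mathrm{L}^2(\tau,T;\V)$, $\frac{\d\u}{\d t}\in\mathrm{L}^2(\tau,T;\V^*)$, with no appeal to strong solutions. Second, as you point out, no compactness is invoked, so nothing special needs to be said about unbounded $\mathcal{O}$; the Faedo--Galerkin route, by contrast, typically identifies the limit of the nonlinear term via an Aubin--Lions argument, which on unbounded domains requires extra care that the paper does not spell out here. The paper's approach has the advantage of being the standard one in the GMNSE literature, so it can be disposed of by citation; yours is more self-contained. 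Your Step~1 Lipschitz estimate (via the $\L^4$ analogue of \eqref{BN-diff}, \eqref{FN3}--\eqref{FN4}, and $\|\B(\u,\boldsymbol{v})\|_{\V^*}\le C\|\u\|_{\L^4}\|\boldsymbol{v}\|_{\L^4}$) and the resulting contraction exponent $(T-\tau)^{1/8}$ all check out.
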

		\begin{proof}
		 The standard Faedo-Galerkin techniques used in Lemma \ref{Soln} can be used to determine the existence of a weak solution $\u\in\mathrm{C}([\tau,+\infty);\H)\cap\mathrm{L}^2_{\mathrm{loc}}(\tau,+\infty;\V)$ to the system \eqref{A.2}. Here, we  show that there exists at most one weak solution to the system \eqref{A.2}  satisfying the energy estimate \eqref{EI}.
			 
			Taking the inner product of the first equation of the system \eqref{A.2} with $\u$, and using the Cauchy-Schwarz and Young's inequalities, we obtain
			\begin{align}
				\frac{1}{2}\frac{\d }{\d t}\|\u(t)\|^2_{\H} + \nu \|\u(t)\|^2_{\V} = \langle \f(t), \u(t) \rangle \leq \frac{1}{\nu}\|\f(t)\|_{\H^{-1}(\mathcal{O})}^2 + \frac{\nu}{4} \|\u(t)\|^2_{\V},
			\end{align}
		for a.e. $t\geq \tau$. The above estimate implies 
		\begin{align}
			\frac{\d }{\d t}\|\u(t)\|^2_{\H} + \nu \lambda \|\u(t)\|^2_{\H} + \frac{\nu}{2} \|\u(t)\|^2_{\V} \leq \frac{2}{\nu}\|\f(t)\|_{\H^{-1}(\mathcal{O})}^2,
		\end{align}
	for a.e. $t\geq \tau$, where we have used \eqref{poin}. Hence, an application of the variation of constants formula provides energy estimate \eqref{EI}.
			
			Let $\u$ and $\boldsymbol{v}$ be two solutions to the system \eqref{A.2} with the same initial data. Let us define $\w=\u-\boldsymbol{v}$.   Then $\w$  satisfies the following system:
			\begin{equation}\label{A.3}
				\left\{
				\begin{aligned}
					\frac{\d\w}{\d t}+\nu \A\w+F_{N}(\|\u\|_{\L^4(\mathcal{O})})\cdot\B(\u)-F_{N}(\|\boldsymbol{v}\|_{\L^4(\mathcal{O})})\cdot\B(\boldsymbol{v})&=\boldsymbol{0}, \\
					\w(\tau)&=\boldsymbol{0}.
				\end{aligned}
				\right.
			\end{equation}
		The existence of strong solutions (see Theorem \ref{thm-strong} below) ensures that every weak solution satisfies the energy equality.	Taking the inner product of the first equation of the system \eqref{A.3} with $\w$, and then using \eqref{b0}, \eqref{FN3} and \eqref{FN4}, we obtain
			\begin{align*}
				&\quad\frac12 \frac{\d}{\d t}\|\w\|_{\H}^2+\nu \|\w\|_{\V}^2 
				\nonumber\\&= -F_{N}(\|\u\|_{\L^4(\mathcal{O})})\cdot b(\u,\u,\w)+ F_{N}(\|\boldsymbol{v}\|_{\L^4(\mathcal{O})})\cdot b(\boldsymbol{v},\boldsymbol{v},\w)
				\nonumber\\ & = -F_{N}(\|\u\|_{\L^4(\mathcal{O})}) \cdot  b(\w,\w,\u)
				- \big[ F_{N}(\|\u\|_{\L^4(\mathcal{O})}) - F_{N}(\|\boldsymbol{v}\|_{\L^4(\mathcal{O})})\big]\cdot b( \boldsymbol{v}, \u,\w )
				\nonumber\\ & \leq F_{N}(\|\u\|_{\L^4(\mathcal{O})}) \cdot \|\w\|_{\L^4(\mathcal{O})}\|\u\|_{\L^4(\mathcal{O})}\|\w\|_{\V} 
				\nonumber\\ & \quad + \frac{1}{N}F_{N}(\|\u\|_{\L^4(\mathcal{O})}) F_{N}(\|\boldsymbol{v}\|_{\L^4(\mathcal{O})})\cdot \|\w\|_{\L^4(\mathcal{O})} \|\u\|_{\L^4(\mathcal{O})}\|\boldsymbol{v}\|_{\L^4(\mathcal{O})} \|\w\|_{\V}
				\nonumber\\ & \leq C N \|\w\|_{\H}^{\frac14}\|\w\|_{\V}^{\frac74} \leq \frac{\nu}{2} \|\w\|_{\V}^{2} + C N^8 \|\w\|_{\H}^{2},
			\end{align*}
			 Now, an application of Gronwall's inequality gives
			\begin{align*}
				\|\w(t)\|_{\H}^2\leq \|\w(\tau)\|_{\H}^2e^{CN^8T}=0,
			\end{align*}
			which proves the uniqueness, that is, $\u(t)=\boldsymbol{v}(t)$ for all $t\geq\tau$ in $\H$.
		\end{proof}
		
		Next we discuss the existence of strong solutions to the system \eqref{A.2}.
		\begin{theorem}\label{thm-strong}
			If $\u_{0}\in\V$ and $\f\in\mathrm{L}^2_{\mathrm{loc}}(\R;\L^2(\mathcal{O}))$, then every weak solution $\u(\cdot)$ of the system \eqref{A.2} belongs to $\mathrm{C}([\tau,+\infty);\V)\cap\mathrm{L}^2_{\mathrm{loc}}(\tau,+\infty;\D(\A))$.
		\end{theorem}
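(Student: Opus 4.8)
The plan is to carry out the higher-order (i.e.\ $\V$-level) energy estimate at the level of the Faedo--Galerkin approximations, pass to the limit, and then deduce continuity into $\V$ from the equation itself. Let $\{e_m\}_{m\geq1}$ be the eigenfunctions of $\A$, let $\mathcal{P}_m$ be the orthogonal projection in $\H$ onto $\mathrm{span}\{e_1,\dots,e_m\}$, and let $\u_m$ denote the Galerkin approximation of \eqref{A.2} with $\u_m(\tau)=\mathcal{P}_m\u_0$; since $\u_0\in\V$ we have $\|\u_m(\tau)\|_{\V}\leq\|\u_0\|_{\V}$. First I would take the inner product of the Galerkin equation with $\A\u_m$ in $\H$ (legitimate since $\A$ preserves $\mathrm{span}\{e_1,\dots,e_m\}$), which, using $\langle\A\u_m,\u_m\rangle=\|\u_m\|_{\V}^2$ and the self-adjointness of $\A$, gives
\begin{align*}
\frac{1}{2}\frac{\d}{\d t}\|\u_m\|_{\V}^2+\nu\|\A\u_m\|_{\H}^2+F_{N}(\|\u_m\|_{\L^4(\mathcal{O})})\,b(\u_m,\u_m,\A\u_m)=(\mathcal{P}\f,\A\u_m),
\end{align*}
where the forcing term is controlled by Cauchy--Schwarz and Young's inequality, $|(\mathcal{P}\f,\A\u_m)|\leq\tfrac{\nu}{4}\|\A\u_m\|_{\H}^2+\tfrac{1}{\nu}\|\f\|_{\L^2(\mathcal{O})}^2$.

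The crucial point is the nonlinear term, and this is exactly where the cut-off on the $\L^4(\mathcal{O})$-norm pays off. By H\"older's inequality $|b(\u_m,\u_m,\A\u_m)|\leq\|\u_m\|_{\L^4(\mathcal{O})}\|\nabla\u_m\|_{\L^4(\mathcal{O})}\|\A\u_m\|_{\H}$; by the three-dimensional interpolation inequality together with the elliptic regularity $\|\nabla\u_m\|_{\L^6(\mathcal{O})}\leq C\|\u_m\|_{\H^2(\mathcal{O})}\leq C\|\A\u_m\|_{\H}$ valid on Poincar\'e domains of class $\mathrm{C}^3$ (cf.\ Lemma 1 of \cite{Heywood}), one obtains $\|\nabla\u_m\|_{\L^4(\mathcal{O})}\leq C\|\u_m\|_{\V}^{1/4}\|\A\u_m\|_{\H}^{3/4}$. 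Combining this with \eqref{FN3}, i.e.\ $F_{N}(\|\u_m\|_{\L^4(\mathcal{O})})\|\u_m\|_{\L^4(\mathcal{O})}\leq N$, and Young's inequality yields
\begin{align*}
F_{N}(\|\u_m\|_{\L^4(\mathcal{O})})\,|b(\u_m,\u_m,\A\u_m)|\leq CN\|\u_m\|_{\V}^{1/4}\|\A\u_m\|_{\H}^{7/4}\leq\frac{\nu}{4}\|\A\u_m\|_{\H}^2+CN^{8}\|\u_m\|_{\V}^2,
\end{align*}
exactly the $N^{8}$-structure that already appeared in the uniqueness argument of Theorem \ref{thm-weak}. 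Inserting these bounds and using \eqref{poin} produces
\begin{align*}
\frac{\d}{\d t}\|\u_m\|_{\V}^2+\frac{\nu}{2}\|\A\u_m\|_{\H}^2\leq CN^{8}\|\u_m\|_{\V}^2+\frac{2}{\nu}\|\f\|_{\L^2(\mathcal{O})}^2,
\end{align*}
whence Gronwall's inequality on $[\tau,t]$, together with $\f\in\mathrm{L}^2_{\mathrm{loc}}(\R;\L^2(\mathcal{O}))$, gives a bound on $\|\u_m(t)\|_{\V}^2$ uniform in $m$ on each compact subinterval of $[\tau,+\infty)$, and after integration a uniform bound on $\int_{\tau}^{t}\|\A\u_m(s)\|_{\H}^2\,\d s$.

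Passing to the limit $m\to\infty$ along a subsequence, the uniform bounds give $\u_m\rightharpoonup\u$ weakly-$\ast$ in $\mathrm{L}^{\infty}_{\mathrm{loc}}(\tau,+\infty;\V)$ and weakly in $\mathrm{L}^2_{\mathrm{loc}}(\tau,+\infty;\D(\A))$, and the weak/weak-$\ast$ lower semicontinuity of the norms transfers these bounds to the limit; by the uniqueness of weak solutions established in Theorem \ref{thm-weak}, this limit coincides with the weak solution $\u$ in the statement, so $\u\in\mathrm{L}^{\infty}_{\mathrm{loc}}(\tau,+\infty;\V)\cap\mathrm{L}^2_{\mathrm{loc}}(\tau,+\infty;\D(\A))$. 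To finally obtain $\u\in\mathrm{C}([\tau,+\infty);\V)$, I would read off from \eqref{A.2} that $\frac{\d\u}{\d t}=\mathcal{P}\f-\nu\A\u-F_{N}(\|\u\|_{\L^4(\mathcal{O})})\B(\u)\in\mathrm{L}^2_{\mathrm{loc}}(\tau,+\infty;\H)$: indeed $\mathcal{P}\f,\A\u\in\mathrm{L}^2_{\mathrm{loc}}(\tau,+\infty;\H)$, while $\|F_{N}(\|\u\|_{\L^4(\mathcal{O})})\B(\u)\|_{\H}\leq N\|\nabla\u\|_{\L^4(\mathcal{O})}\leq CN\|\u\|_{\V}^{1/4}\|\A\u\|_{\H}^{3/4}\in\mathrm{L}^{8/3}_{\mathrm{loc}}\subset\mathrm{L}^2_{\mathrm{loc}}$ by the bounds just obtained. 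Then $\u\in\mathrm{L}^2_{\mathrm{loc}}(\tau,+\infty;\D(\A))$ with $\frac{\d\u}{\d t}\in\mathrm{L}^2_{\mathrm{loc}}(\tau,+\infty;\H)$, and the standard Lions--Magenes interpolation lemma (cf.\ \cite{R.Temam}) gives $\u\in\mathrm{C}([\tau,+\infty);[\D(\A),\H]_{1/2})=\mathrm{C}([\tau,+\infty);\V)$, completing the proof.

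I expect the principal obstacle to be not any single inequality but the bookkeeping of the Galerkin scheme: checking that all the estimates above are genuinely uniform in $m$, and that the weak/weak-$\ast$ limit is indeed the weak solution of the statement (which is where Theorem \ref{thm-weak} is essential, since a priori the $\V$-estimate only produces \emph{some} solution with the stated regularity). The two technical ingredients that must be invoked carefully are the three-dimensional interpolation estimate for $\|\nabla\u\|_{\L^4(\mathcal{O})}$ and the elliptic regularity $\D(\A)=\V\cap\H^2(\mathcal{O})$ on Poincar\'e domains whose boundary is uniformly of class $\mathrm{C}^3$; the cut-off $F_{N}(\|\cdot\|_{\L^4(\mathcal{O})})$ is precisely what converts the otherwise supercritical nonlinear term into the harmless $CN^{8}\|\u\|_{\V}^2$.
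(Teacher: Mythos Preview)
Your proof is correct and follows essentially the same approach as the paper: the key $\V$-level energy estimate, the bound $F_{N}(\|\u\|_{\L^4(\mathcal{O})})\,|b(\u,\u,\A\u)|\leq CN\|\u\|_{\V}^{1/4}\|\A\u\|_{\H}^{7/4}\leq\frac{\nu}{4}\|\A\u\|_{\H}^2+CN^{8}\|\u\|_{\V}^2$ via H\"older, \eqref{FN3}, the interpolation $\|\nabla\u\|_{\L^4(\mathcal{O})}\leq C\|\u\|_{\V}^{1/4}\|\A\u\|_{\H}^{3/4}$, and Young's inequality, followed by Gronwall, is exactly the paper's argument. The paper writes the estimates formally on $\u$ and defers the Galerkin passage and the $\mathrm{C}([\tau,+\infty);\V)$ conclusion to \cite{Caraballo+Real+Kloeden_2006,Caraballo+Real+Kloeden_2010}, whereas you spell these steps out explicitly (including the Lions--Magenes interpolation for continuity), but the substance is identical.
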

		\begin{proof}
			Here, we mainly provide the energy estimates which is different from the works \cite{Caraballo+Real+Kloeden_2006,Caraballo+Real+Kloeden_2010}, and the rest of the proof can be completed  analogous to the works \cite{Caraballo+Real+Kloeden_2006,Caraballo+Real+Kloeden_2010}. Let us take the inner product of the first equation of the system \eqref{A.3} with $\A\u$ to obtain
			\begin{align}\label{A.4}
				\frac12\frac{\d}{\d t}\|\u\|_{\V}^2+\nu \|\A\u\|_{\H}^2=(\f,\A\u)+ F_{N}(\|\u\|_{\L^4(\mathcal{O})})\cdot b(\u,\u,\A\u).
			\end{align}
			An application of H\"older's and Young's inequalities imply
			\begin{align}\label{A.5}
				|(\f,\A\u)|\leq \frac{1}{\nu}\|\f\|_{\L^2(\mathcal{O})} + \frac{\nu}{4}\|\A\u\|_{\H}^2.
			\end{align}
			Using H\"older's inequality, \eqref{FN3}, and Gagliardo-Nirenberg's, Sobolev's and Young's inequalities, we obtain 
			\begin{align}\label{A.6}
				|F_{N}(\|\u\|_{\L^4(\mathcal{O})})\cdot b(\u,\u,\A\u)|
				& \leq F_{N}(\|\u\|_{\L^4(\mathcal{O})})\cdot \|\u\|_{\L^4(\mathcal{O})} \|\nabla\u\|_{\L^4(\mathcal{O})}\|\A\u\|_{\H}
				\nonumber\\ & \leq C N \|\A\u\|^{\frac34}_{\H}\|\u\|_{\mathbb{L}^6(\mathcal{O})}^{\frac14}\|\A\u\|_{\H}
				\nonumber\\ & \leq C N \|\A\u\|^{\frac{7}{4}}_{\H}\|\u\|_{\V}^{\frac14}
				\nonumber\\ & \leq \frac{\nu}{4} \|\A\u\|^{2}_{\H} +C N^{8} \|\u\|_{\V}^{2}.
			\end{align}
			Combining \eqref{A.4}-\eqref{A.6} and applying Gronwall's inequality, we obtain
			\begin{align*}
				\|\u(t)\|_{\V}^2 +\frac{\nu}{2}\int_{\tau}^{t}\|\A\u(s)\|_{\H}^2\d s \leq \|\u_0\|_{\V}^2 e^{CN^{8}},
			\end{align*}
			for all $t\geq \tau$. The above estimate guarantees that the strong solution $\u\in\mathrm{C}([\tau,+\infty);\V)\cap\mathrm{L}^2_{\mathrm{loc}}(\tau,+\infty;\D(\A))$. 
		\end{proof}

		\begin{remark}
			The results of this work and other relevant results known in the literature for the system  \eqref{2} (see for instance \cite{Anh+Thanh+Tuyet_2023,Caraballo+Chen+Yang_2023_SAM,Caraballo+Chen+Yang_2023_AMOP,Deugoue+Medjo_2018,Hang+My+Nguyen_2024, Hang+My+Nguyen_2024a, Hang+Nguyen_2024, Kloeden+Langa+Real_2007,Ren_2014,Zhao+Yang_2017}, etc.)  can be established for the system \eqref{A.1} also with a suitable modification in the estimates (for instance, see \eqref{EI}). We also point out that the model \eqref{A.1} has an important feature with respect to stochastic perturbation which has been discussed in the work \cite{Kinra+Mohan_UP}.
		\end{remark}
	\end{appendix}

		\medskip\noindent
		\textbf{Acknowledgments:}  
		The first author would like to thank Hanoi Pedagogical University 2 for providing a fruitful working environment. This research was funded by Vietnam Ministry of Education and Training under grant number B2024-CTT-06. Corresponding author would like to thank the Department of Atomic Energy, Government of India, for financial assistance and the Tata Institute of Fundamental Research - Centre for Applicable Mathematics (TIFR-CAM) for providing a stimulating scientific environment and resources. K. Kinra is also funded by national funds through the FCT - Fundação para a Ciência e a Tecnologia, I.P., under the scope of the projects UID/297/2025 and UID/PRR/297/2025 (Center for Mathematics and Applications - NOVA Math). M. T. Mohan would like to thank the Department of Science and Technology (DST) Science \& Engineering Research Board (SERB), India for a MATRICS grant (MTR/2021/000066).

		\medskip\noindent
		\textbf{Data availability:} No data was used for the research described in the article.

		\medskip\noindent
		\textbf{Conflict of interest:} The authors declare no conflict of interest.

\end{document}